\newcommand{\C}{{\mathbb{C}}}
\newcommand{\F}{{\mathbb{F}}}
\newcommand{\Q}{{\mathbb{Q}}}
\newcommand{\Z}{{\mathbb{Z}}}
\newcommand{\fC}{{\mathfrak{C}}}
\newcommand{\fS}{{\mathfrak{S}}}
\newcommand{\fp}{{\mathfrak{p}}}
\newcommand{\fq}{{\mathfrak{q}}}
\newcommand{\fs}{{\mathfrak{s}}}
\newcommand{\ft}{{\mathfrak{t}}}
\newcommand{\cO}{{\mathcal{O}}}
\newcommand{\cH}{{\mathcal{H}}}
\newcommand{\ba}{{\mathbf{a}}}
\newcommand{\bc}{{\mathbf{c}}}
\newcommand{\Irr}{{\operatorname{Irr}}}
\renewcommand{\leq}{\leqslant}
\renewcommand{\geq}{\geqslant}
\renewcommand{\atop}[2]{\genfrac{}{}{0pt}{}{#1}{#2}}
\begin{document}
\begin{frontmatter}
\title{James' Conjecture for Hecke algebras of exceptional type, I}
\author{Meinolf Geck}
\address{Department of Mathematical Sciences, King's College, 
Aberdeen University, AB24 3UE, Scotland, UK}
\ead{geck@maths.abdn.ac.uk}
\author{J\"urgen M\"uller}
\address{Lehrstuhl D f\"ur Mathematik, RWTH Aachen, Templergraben 64, 
D-52062 Aachen, Germany}
\ead{Juergen.Mueller@math.rwth-aachen.de}
\dedicated{To Gus Lehrer on his $60$th birthday}

\begin{abstract} In this paper, and a second part to follow, we complete the 
programme (initiated more than $15$ years ago) of determining the 
decomposition numbers and verifying James' Conjecture for 
Iwahori--Hecke algebras of exceptional type. The new ingredients which 
allow us to achieve this aim are:
\begin{itemize}
\item[$\bullet$] the fact, recently proved by the first author, that all 
Hecke algebras of finite type are cellular in the sense of Graham--Lehrer,
and
\item[$\bullet$] the explicit determination of $W$-graphs for the irreducible
(generic) representations of Hecke algebras of type $E_7$ and $E_8$ by 
Howlett and Yin.
\end{itemize}
Thus, we can reduce the problem of computing decomposition numbers 
to a manageable size where standard techniques, e.g., Parker's 
{\sf MeatAxe} and its variations, can be applied. In this part, we
describe the theoretical foundations for this procedure.
\end{abstract}

\begin{keyword} 
Hecke algebra \sep decomposition numbers \sep James' conjecture 
\MSC Primary 20C08
\end{keyword}
\end{frontmatter}

\section{Introduction} \label{sec1}
Let $k$ be a field and $q$ a non-zero element of $k$. Let $H_n(k,q)$ be 
the Iwahori--Hecke algebra of type $A_{n-1}$ with parameter $q$; this is 
a certain deformation of the group algebra of the symmetric group $\fS_n$.
In order to study the representation theory of $H_n(k,q)$, Dipper and James 
\cite{DJ} developed a $q$-version of the classical theory of Specht modules 
for $\fS_n$.  In this framework, one obtains a natural parametrization of 
$\Irr(H_n(k,q))$ (the set of irreducible representations, up to isomorphism)
in terms of $e$-regular partitions, where  the parameter $e$ is defined by
\[ e=\min\{i\geq 2\mid 1+q+q^2+\cdots + q^{i-1}=0\}.\]
(We set $e=\infty$ if no such $i$ exists.) If $k$ has characteristic~$0$,
then we also know how to determine the dimensions of the irreducible 
representations, thanks to the Lascoux--Leclerc--Thibon conjecture \cite{llt}
and its proof by Ariki \cite{Ar}. However, the analogous problem for
$k$ of positive characteristic is completely open. 

Assume now that $e<\infty$ and $\mbox{char}(k)=\ell>0$. Based on empirical 
evidence for $n=2,3,\ldots,10$, James \cite{Ja} made the remarkable
conjecture that if $e\ell>n$, then $\Irr(H_n(k,q))$ only depends on~$e$. 
More precisely, James predicts that $\Irr(H_n(k,q))$ could be obtained from 
the $\C$-algebra $H_n(\C,\sqrt[e]{1})$ by a process of $\ell$-modular 
reduction. Shortly afterwards, the first-named author \cite{mybaum} 
formulated a version of James' conjecture for Iwahori--Hecke algebras 
associated to finite Weyl groups in general, and proved that it holds in 
the so-called ``defect $1$ case''. (In type $A_{n-1}$, this corresponds to 
the case where $e$ divides exactly one of the numbers $2,3,\ldots,n$.) The 
article \cite{mybaum} also contains an argument which shows that the 
irreducible representations of any Iwahori--Hecke algebra over a field of 
characteristic $\ell>0$ can always be obtained by $\ell$-modular reduction 
from an algebra in characteristic $0$, as long as $\ell$ is large enough. 
Thus, James' conjecture and its generalizations are really about finding
the correct bound for~$\ell$.

By ad hoc computational methods, the general version of James' conjecture 
has been shown to hold for Iwahori--Hecke algebras of type $F_4$ and $E_6$; 
see \cite{GeLu}, \cite{mye6}. These methods, however, turned out to be 
completely inadequate to deal with algebras of larger rank; in particular,
types $E_7$ and $E_8$ remained far out of reach. 

Using the Kazhdan--Lusztig theory of cells \cite{Lusztig03} and the 
Graham--Lehrer concept of abstract ``cell data'' \cite{GrLe}, it was 
recently shown in \cite{mycell} that a suitable theory of ``Specht modules'' 
exists for Iwahori--Hecke algebras associated to finite Weyl groups in 
general. First of all, this has the theoretical implication that we can now
formulate a general version of James' conjecture which is, perhaps, more 
natural than the one in \cite{mybaum}. Furthermore, this has the practical 
implication of leading to an algorithm for verifying the general version 
of James' conjecture, in which the main issue is the determination of the 
invariant bilinear form (and its rank) on a ``cell representation''. 

In order to make this work, a number of problems have to be resolved. To 
begin with, we need explicit models for those ``cell representations''.
For $W$ of exceptional type, we will see that such models are given by the 
$W$-graph representations which were recently obtained by Howlett and Yin 
\cite{How}, \cite{yin0} and which are readily accessible through Michel's 
development version \cite{jmich} of the computer algebra system 
{\sf CHEVIE} \cite{chv}. Then the determination of the invariant bilinear 
form essentially amounts to solving a system of linear equations. This 
works fine for dimensions of up to around $2500$, but some more refined 
methods are necessary for dealing with the large representations (of 
dimension up to $7168$) in type $E_8$. The discussion of these finer 
computational methods is beyond the scope of the present article and can 
be found in \cite{gemu2}.

Still, with all these new tools at hand, the computations required to 
determine the Gram matrices of the invariant bilinear forms for large
representations in type $E_8$ takes {\it several months} of CPU time on 
modern computers. Note, however, that once these matrices have been computed,
it is relatively easy to verify that they indeed define invariant bilinear 
forms and to compute their ranks for various specialisations. It is planned 
to create a data base which makes these data generally available. 

This paper is organised as follows. In Section~2, we recall the construction
of ``cell data'' \`a la Graham--Lehrer in Iwahori--Hecke algebras associated 
to finite Weyl groups. We also discuss the example of type $G_2$, which 
provides a first illustration for the phenomenon expressed in James' 
conjecture. In Section~3, we formulate the general version of James' 
conjecture using the new approach based on cell representations. The
equivalent formulation in Corollary~\ref{Jconj1} provides the conceptual
basis for the algorithm for verifying James' conjecture. In Section~4, we 
discuss the main computational issues in this algorithm and show how they 
can be solved---at least in principle. In particular, in \S \ref{subP2}, we
prove a general result which allows us to verify that the Howlett--Yin
$W$-graph representations do provide suitable models for the ``cell 
representations''. This fact raises a general question about $W$-graph
representations which is formulated as Conjecture~\ref{Cwgraph}. 

\section{Cellular bases and cell representations} \label{sec2}
Let $W$ be an irreducible finite Weyl group with generating set $S$. 
Let $R \subseteq \C$ be a subring and $A=R[v,v^{-1}]$ the ring of Laurent 
polynomials in an indeterminate~$v$. Let $\cH$ be the corresponding 
$1$-parameter Iwahori--Hecke algebra over $A$. As an $A$-module, $\cH$ is 
free with basis $\{T_w\mid w\in W\}$; the multiplication is given by
\[ T_sT_w=\left\{\begin{array}{cl} T_{sw} &\quad\mbox{if $l(sw)=l(w)+1$},\\
uT_{sw}+(u-1)T_w & \quad \mbox{if $l(sw)=l(w)-1$},\end{array}
\right.\]
where $u=v^2$, $s\in S$ and $w\in W$. Here, $l(w)$ denotes the length of 
$w\in W$. For the general theory of Iwahori--Hecke algebras, we refer to 
\cite{ourbuch}. These algebras, and their specialisations, play an important 
role in the representation theory of finite reductive groups; see, for 
example, \cite[Chap.~0]{Lusztig03}, \cite{laus05}.

In order to specify a {\em cell datum} for $\cH$ in the sense of Graham and 
Lehrer \cite[Def.~1.1]{GrLe}, we must specify a quadruple $(\Lambda,M,C,*)$
satisfying the following conditions.
\begin{description}
\item[(C1)] $\Lambda$ is a partially ordered set, $\{M(\lambda) \mid
\lambda \in \Lambda\}$ is a collection of finite sets  and
\[ C \colon \coprod_{\lambda \in \Lambda} M(\lambda)\times M(\lambda)
\rightarrow \cH \]
is an injective map whose image is an $A$-basis of $\cH$;
\item[(C2)] If $\lambda \in \Lambda$ and $\fs,\ft\in M(\lambda)$, write
$C(\fs,\ft)=C_{\fs,\ft}^\lambda \in \cH$. Then $* \colon \cH \rightarrow
\cH$ is an $A$-linear anti-involution such that $(C_{\fs,\ft}^\lambda)^*=
C_{\ft,\fs}^\lambda$.
\item[(C3)] If $\lambda \in \Lambda$ and $\fs,\ft\in M(\lambda)$, then for
any element $h \in \cH$ we have
\[ hC_{\fs,\ft}^\lambda\equiv \sum_{\fs'\in M(\lambda)} r_h(\fs',\fs)\,
C_{\fs',\ft}^\lambda\quad \bmod \cH(<\lambda),\]
where $r_h(\fs',\fs) \in A$ is independent of $\ft$ and where $\cH(<\lambda)$
is the $A$-submodule of $\cH$ generated by $\{C_{\fs'',\ft''}^\mu
\mid \mu <\lambda;\; \fs'',\ft''\in M(\mu)\}$.
\end{description}
For this purpose, we first need to recall some basic facts about the
representations of $W$ and $\cH_K=K\otimes_A \cH$, where $K$ is the
field of fractions of~$A$.

It is known that $\Q$ is a splitting field for $W$; see, for 
example, \cite[6.3.8]{ourbuch}. We will write
\[ \Irr(W)=\{E^\lambda \mid \lambda \in \Lambda\}, \qquad  d_\lambda=
\dim E^\lambda,\]
for the set of irreducible representations of $W$ (up to equivalence),
where $\Lambda$ is some finite indexing set. Now, the algebra $\cH_K$ is 
known to be split semisimple; see \cite[9.3.5]{ourbuch}. Furthermore, by 
Tits' Deformation Theorem, the irreducible representations of $\cH_K$ (up 
to isomorphism) are in bijection with the irreducible representations 
of $W$; see \cite[8.1.7]{ourbuch}. Thus, we can write
\[ \Irr(\cH_K)=\{E^\lambda_v \mid \lambda \in \Lambda\}.\]
The correspondence $E^\lambda \leftrightarrow E^\lambda_v$ is uniquely
determined by the following condition:
\[ \mbox{trace}\bigl(w,E^\lambda\bigr)=\mbox{trace}\bigl(
T_w,E^\lambda_v\bigr)\Big|_{v=1} \qquad \mbox{for all $w \in W$};\]
note that $\mbox{trace}\bigl(T_w,E^\lambda_v\bigr) \in A$ for all
$w\in W$. 

The algebra $\cH$ is {\em symmetric} with respect to the trace form 
$\tau\colon \cH \rightarrow A$ defined by $\tau(T_1)=1$ and $\tau(T_w)=0$
for $1\neq w\in W$. Hence we have the following orthogonality relations 
for the irreducible representations of $\cH_K$:
\[ \sum_{w \in W} u^{-l(w)}\mbox{trace}\bigl(T_w,E^\lambda_v\bigr)
\,\mbox{trace}\bigl(T_{w^{-1}},E_v^\mu\bigr)=\left\{\begin{array}{cl}
d_\lambda\,\bc_\lambda & \quad \mbox{if $\lambda=\mu$},\\ 0 & \quad
\mbox{if $\lambda \neq\mu$},\end{array}\right.\]
where $0 \neq \bc_\lambda \in {\Z}[u,u^{-1}]$; see \cite[8.1.7 and 
9.3.6]{ourbuch}. Following Lusztig, we write
\[ \bc_\lambda=f_\lambda\, u^{-\ba_\lambda}+\mbox{combination of strictly
higher powers of $u$},\]
where $\ba_\lambda,f_\lambda$ are integers such that $\ba_\lambda\geq 0$
and $f_\lambda>0$; see \cite[9.4.7]{ourbuch}. These integers are explicitly 
known for all types of $W$; see Lusztig \cite[Chap.~4]{LuBook} or 
\cite[Chap.~22]{Lusztig03}. 

\medskip
\begin{rem} \label{equalp} Since we are in the equal parameter case, the
Laurent polynomials $\bc_\lambda$ have the following properties: Each 
$\bc_\lambda$ divides the Poincar\'e polynomial $P_W=\sum_{w \in W} 
u^{l(w)}$ in ${\Q}[u,u^{-1}]$; furthermore, we have 
\[\bc_\lambda=f_\lambda u^{-\ba_\lambda}\, \tilde{\bc}_\lambda \qquad
\mbox{where $\tilde{\bc}_\lambda \in {\Z}[u]$ is monic and divides
$P_W$}.\]
(For these facts, see \cite[9.3.6]{ourbuch} and the references there.) It 
is well-known (see, for example, \cite[\S 9.4]{Ca1}) that 
\[ P_W=\prod_{1\leq i \leq |S|} \frac{u^{d_i}-1}{u-1}\]
where $d_1,\ldots,d_{|S|}$ are the so-called {\it degrees} of $W$; we 
have $|W|=d_1\cdots d_{|S|}$. By \cite[\S 10.2]{Ca1}, the degrees for 
the various types of $W$ are given as follows: 
\[ \renewcommand{\arraystretch}{1.2} \begin{array}{cc} \hline
\mbox{Type} & \mbox{degrees $d_i$}\\ \hline
A_{n-1} & 2,3,4,\ldots,n \\
B_n,C_n & 2,4,6, \ldots, 2n \\
D_n & 2,4,6,\ldots,2(n-1),n\\ \hline  & \\ & \end{array}\qquad
\begin{array}{cc} \hline
\mbox{Type} & \mbox{degrees $d_i$}\\ \hline
G_2 & 2,6 \\
F_4 & 2,6,8,12 \\
E_6 & 2,5,6,8,9,12 \\
E_7 & 2,6,8,10,12,14,18\\
E_8 & 2,8,12,14,18,20,24,30 \\ \hline
\end{array}\]
\end{rem}

We are now ready to define a ``cell datum'' of $\cH$. The required 
quadruple $(\Lambda,M,C,*)$ is given as follows. Let $\Lambda$ be an 
indexing set for the irreducible representations of $W$, as above. For 
$\lambda\in \Lambda$, we set $M(\lambda)=\{1,\ldots, d_\lambda\}$. Using 
the $\ba$-invariants, we define a partial order $\preceq$ on $\Lambda$ by
\[ \lambda \preceq \mu \qquad \stackrel{\text{def}}{\Leftrightarrow} \qquad
\lambda=\mu \quad \mbox{or}\quad \ba_\lambda >\ba_\mu.\]
Thus, $\Lambda$ is ordered according to {\em decreasing} $\ba$-value. Next,
we define an $A$-linear anti-involution $* \colon \cH \rightarrow \cH$
by $T_w^*=T_{w^{-1}}$ for all $w \in W$. Thus, $T_w^*=T_w^{\,\flat}$ in the
notation of \cite[3.4]{Lusztig03}. 

The trickiest part is, of course, the definition of the basis elements 
$C_{\fs,\ft}^\lambda$ for $\fs,\ft\in M(\lambda)$. Let $\{c_w\mid w\in W\}$
be the Kazdan--Lusztig basis of $\cH$, as  constructed in 
\cite[Theorem~5.2]{Lusztig03}. Given $x,y \in W$, we write $c_xc_y=
\sum_{z\in W} h_{x,y,z}c_z$ where $h_{x,y,z}\in A$. Following Lusztig 
\cite[13.6]{Lusztig03}, we use the structure constants $h_{x,y,z}$ to 
define a function $\ba \colon W \rightarrow \Z_{\geq 0}$ by
\[ \ba(z):=\min \{i \geq 0 \mid v^i h_{x,y,z}\in {\Z}[v] \mbox{ for all
$x,y \in W$}\} \qquad \mbox{for all $z \in W$}.\]
As in [{\em loc.\ cit.}], we usually work with the elements $c_w^\dagger$ 
obtained by applying the unique $A$-algebra involution $\cH \rightarrow 
\cH$, $h \mapsto h^\dagger$ such that $T_s^\dagger=-T_s^{-1}$ for any 
$s\in S$; see \cite[3.5]{Lusztig03}. We can now state:

\medskip
\begin{thm}[Geck \protect{\cite[Theorem 3.1]{mycell}}] \label{Hcellbase} 
Assume that the subring $R \subseteq \C$ is chosen such that all bad primes 
for $W$ are invertible in $R$. Then there is a cell datum $(\Lambda,M,C,*)$
for $\cH$ where $\Lambda$, $M$, $*$ are as specified above and, for each 
$\lambda \in \Lambda$ and $\fs,\ft\in M(\lambda)$, the element $C_{\fs,
\ft}^\lambda$ is a $\Z$-linear combination of basis elements $c_w^\dagger$ 
where $\ba(w)=\ba_\lambda$.
\end{thm}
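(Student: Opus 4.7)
The plan is to construct the cell datum by combining Lusztig's theory of two-sided cells with the ``leading matrix coefficients'' of the irreducible representations $E^\lambda_v$ of $\cH_K$. Recall that the Kazhdan--Lusztig preorders partition $W$ into left, right and two-sided cells, and that the $\ba$-function is constant on each two-sided cell. Moreover, the Lusztig families of $\Irr(W)$ are in canonical bijection with the two-sided cells $\fC$, and the $\ba$-value attached to a family coincides with $\ba_\lambda$ for any $E^\lambda$ in it; one also has $|\fC|=\sum_{\lambda\in\mathcal{F}_\fC} d_\lambda^2$, where $\mathcal{F}_\fC$ is the associated family. Hence I index the $C^\lambda_{\fs,\ft}$ by pairs $\fs,\ft\in M(\lambda)$, with the aim of making $C^\lambda_{\fs,\ft}$ a $\Z$-linear combination of the $c^\dagger_w$ for $w$ in the two-sided cell attached to $\lambda$.

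To define the elements explicitly, fix for each $\lambda$ a matrix model of $E^\lambda_v$ and write $\rho^\lambda_{\fs,\ft}(w)\in K$ for the $(\fs,\ft)$-entry of the matrix of $c^\dagger_w$. The orthogonality relations recalled above show that, as a Laurent polynomial in $v$, $\rho^\lambda_{\fs,\ft}(w)$ has a non-zero contribution at the critical power $v^{-\ba_\lambda}$ only when $w$ lies in the two-sided cell associated to $\lambda$. Collecting these leading values for $w$ in that cell yields a square matrix, by the dimension identity above. Choose a basis of $E^\lambda$ so that this matrix is invertible over $R$---this is the point at which the hypothesis ``bad primes invertible in $R$'' enters, since those primes are precisely the ones that can appear in the relevant determinants, via the Schur elements $f_\lambda$ and the factors of $P_W$. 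Let $\gamma^\lambda_{\fs,\ft}(w)\in\Z$ denote the entries of the inverse, and set
\[
C^\lambda_{\fs,\ft}\;=\;\sum_{w:\,\ba(w)=\ba_\lambda} \gamma^\lambda_{\fs,\ft}(w)\, c_w^\dagger .
\]

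Axiom (C1) is then a direct consequence of the invertibility of the leading matrix, together with the identity $|\fC|=\sum_{\lambda\in\mathcal{F}_\fC} d_\lambda^2$ applied cell by cell. For axiom (C2) one observes that the anti-involution $*$ and the algebra automorphism $\dagger$ commute, since they agree on the generators $T_s$, and hence $(c_w^\dagger)^*=c^\dagger_{w^{-1}}$. Choosing the matrix realisations of $E^\lambda_v$ compatibly, so that the matrix of $T_{w^{-1}}$ is the transpose of the matrix of $T_w$ for every $w$ (which is possible because $\cH$ is symmetric), yields $\gamma^\lambda_{\fs,\ft}(w)=\gamma^\lambda_{\ft,\fs}(w^{-1})$, and therefore $(C^\lambda_{\fs,\ft})^*=C^\lambda_{\ft,\fs}$.

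The main obstacle is axiom (C3): for every $x\in W$ one has to prove
\[
c^\dagger_x\, C^\lambda_{\fs,\ft}\;\equiv\;\sum_{\fs'\in M(\lambda)} r_x(\fs',\fs)\, C^\lambda_{\fs',\ft}\pmod{\cH(<\lambda)},
\]
with $r_x(\fs',\fs)\in A$ independent of $\ft$. This ``column uniformity'' of the left action on the subquotient $\cH(\leq\lambda)/\cH(<\lambda)$ is the translation, via the leading matrix coefficient map, of the fact that Lusztig's asymptotic ring $J$ acts on the span of the $t_w$ ($w$ in a fixed two-sided cell) in a block-diagonal way that respects the column indexing. The required identity therefore follows from Lusztig's properties (P1)--(P15) for the structure constants $h_{x,y,z}$, which hold in the equal parameter case. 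Once (C3) is established, the verification that $(\Lambda,M,C,*)$ is a cell datum with the announced integrality property is complete.
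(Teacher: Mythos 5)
Your overall strategy---leading matrix coefficients of the $E^\lambda_v$, constancy of the $\ba$-function on two-sided cells, the counting identity $|\fC|=\sum_\lambda d_\lambda^2$ for a family, and Lusztig's properties $\mathbf{P1}$--$\mathbf{P15}$ together with the asymptotic ring $J$ for axiom (C3)---is indeed the strategy of the proof in \cite{mycell}, which the present paper only quotes. But your construction has a genuine gap exactly at the integrality assertion, which is the delicate part of the statement. You take as coefficients the entries $\gamma^\lambda_{\fs,\ft}(w)$ of the \emph{inverse} of the matrix of leading values and assert $\gamma^\lambda_{\fs,\ft}(w)\in\Z$. That is unjustified and in general false: by the Schur relations for leading matrix coefficients, the inverse of that square matrix is, up to the reindexings $\fs\leftrightarrow\ft$ and $w\leftrightarrow w^{-1}$, its transpose with the $\lambda$-block divided by $f_\lambda$, so its entries lie in $f_\lambda^{-1}\Z$ and the bad primes sit in the denominators. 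No choice of bases of the $E^\lambda$ can repair this: a base change acts on the $\lambda$-block of rows by $X\mapsto gXg^{-1}$, a map of determinant $1$, so the determinant of the leading-value matrix is basis-independent and its square is $\pm\prod_\lambda f_\lambda^{d_\lambda^2}$; for the $\ba=1$ family in type $G_2$ (Example~\ref{expG2}) this determinant is $\pm 432$, so the matrix is invertible over $R$ but never over $\Z$. With your definition the $C^\lambda_{\fs,\ft}$ are therefore only $R$-linear combinations of the $c_w^\dagger$, and the $\Z$-linearity claimed in the theorem---which the paper uses later, e.g.\ in Remark~\ref{integral} and Corollary~\ref{detg}, to get Gram matrices over $\Z[v,v^{-1}]$---is lost.

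The construction in \cite{mycell} avoids this by taking as coefficients the leading matrix coefficients \emph{themselves}, not the entries of the inverse: integrality is obtained by realising each $E^\lambda$ on a $\Z$-lattice stable under Lusztig's ring $J$ (whose structure constants are integers), and the hypothesis that bad primes are invertible in $R$ enters afterwards, via the Schur relations, to prove that these integral elements form an $A$-basis of $\cH$, i.e.\ axiom (C1); only $f_\lambda$, not the factors of $P_W$, occurs there. Two further steps in your sketch need more than you provide: the vanishing of the leading term of the matrix entries of $c_w^\dagger$ in $E^\lambda_v$ for $w$ outside the two-sided cell attached to $\lambda$ does not follow from the displayed orthogonality relations for the $\bc_\lambda$ alone, but from the theory of leading coefficients resting on $\mathbf{P1}$--$\mathbf{P15}$; and for (C2) you cannot simply choose models in which $T_{w^{-1}}$ is represented by the transpose of the matrix of $T_w$---over $K$ this amounts to orthonormalising the invariant bilinear form, which requires square roots that $K$ need not contain. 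This is precisely why \cite{mycell} works with orthogonal representations and deduces (C2) from the symmetry of the leading coefficients under $w\mapsto w^{-1}$ combined with transposition of the indices.
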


\medskip
Here, a prime number $p$ is called {\em bad} for $W$ if $p$ divides 
$f_\lambda$ for some $\lambda \in \Lambda$. Otherwise, $p$ is called 
{\em good}. This corresponds to the familiar definition of ``bad'' primes; 
see Lusztig \cite[Chap.~4]{LuBook}. The conditions for being good for the 
various types of $W$ are as follows:
\begin{center} $\begin{array}{rl} A_n: & \mbox{no condition}, \\
B_n, C_n, D_n: & p \neq 2, \\
G_2, F_4, E_6, E_7: &  p \neq 2,3, \\
E_8: & p \neq 2,3,5.  \end{array}$
\end{center}
For the rest of this paper, we shall now make the definite choice where the 
ring $R$ consists of all fractions $a/b\in \Q$ such that $a \in \Z$ and
$0 \neq b\in \Z$ is divisible by bad primes only.

\medskip
\begin{rem} \label{integral} For future reference, we remark 
that, if $h \in \cH$ is a ${\Z}[v,v^{-1}]$-linear combination of basis
elements $\{T_w \mid w \in W\}$, then we also have  
\[ r_h(\fs',\fs) \in {\Z}[v,v^{-1}] \qquad \mbox{for all $\lambda \in
\Lambda$ and $\fs,\fs' \in M(\lambda)$};\]
see the explicit formula for $r_h(\fs',\fs)$ in Step~3 of the proof 
of \cite[Theorem 3.1]{mycell}.
\end{rem}

\medskip
Following Graham and Lehrer \cite{GrLe}, we can perform the following 
constructions. Given $\lambda \in \Lambda$, let 
$W^\lambda$ be a free $A$-module with basis $\{C_\fs\mid \fs \in
M(\lambda)\}$. Then $W^\lambda$ is a left $\cH$-module, where the action
is given by 
\[ h.C_\fs=\sum_{\fs' \in M(\lambda)} r_h(\fs',\fs)\, C_{\fs'}.\] 
Furthermore, we can define a symmetric bilinear form $\phi^\lambda \colon 
W^\lambda \times W^\lambda \rightarrow A$ by 
\[ \phi^\lambda(C_\fs,C_\ft)=r_h(\fs,\fs) \qquad \mbox{where $\fs,\ft\in
M(\lambda)$ and $h=C_{\fs, \ft}^\lambda$}.\] 
We have $\phi^\lambda(T_w.C_\fs,C_\ft)=\phi^\lambda(C_\fs,T_{w^{-1}}.C_\ft)$ 
for all $\fs,\ft \in M(\lambda)$ and $w \in W$; see \cite[Prop.~2.4]{GrLe}.

The modules $\{W^\lambda\mid \lambda \in \Lambda\}$ are called the
{\em cell representations}, or {\em cell modules}, of $\cH$. Extending 
scalars from $A$ to $K$, we obtain modules $W_K^\lambda=K\otimes_A 
W^\lambda$ for $\cH_K$. By the discussion in \cite[Exp.~4.4]{mycell}, we have 
\[ \Irr(\cH_K)=\{W_K^\lambda \mid \lambda \in \Lambda\}
\quad \mbox{and} \quad W_K^\lambda\cong E^\lambda_v \quad 
\mbox{for all $\lambda \in \Lambda$}.\]
Now let $\theta \colon A \rightarrow k$ be a ring homomorphism into a 
field $k$; note that the characteristic of $k$ will be either $0$ or a 
prime $p$ which is not bad for $W$. By extension of scalars, we obtain a 
$k$-algebra $\cH_k(W,\xi) =k \otimes_A \cH$ where $\xi:=\theta(u)\in k$. 
Explicitly, $\cH_k(W,\xi)$ has a basis $\{T_w \mid w\in W\}$ and the
multiplication is given by 
\[ T_sT_w=\left\{\begin{array}{cl} T_{sw} &\quad\mbox{if $l(sw)=l(w)+1$},\\
\xi T_{sw}+(\xi-1)T_w & \quad \mbox{if $l(sw)=l(w)-1$},
\end{array} \right.\]
where $s\in S$ and $w\in W$. The algebra $\cH_{k}(W,\xi)$ is called a 
{\em specialisation} of $\cH$. Let $\Irr(\cH_{k}(W,\xi))$ be the set of 
irreducible representations of $H_{k}(W,\xi)$, up to isomorphism. 

Now, we also obtain cell modules $W_\xi^\lambda=k \otimes_A 
W^\lambda$ ($\lambda \in \Lambda$) for $\cH_k(W,\xi)$, which may no longer 
be irreducible. Denoting by $\phi_\xi^\lambda$ the induced bilinear form on 
$W_\xi^\lambda$, we set 
\[ L_\xi^\lambda=W_\xi^\lambda/\mbox{rad}(\phi_\xi^\lambda).\] 
Then, by the general theory of cellular algebras in \cite[\S 3]{GrLe}, each 
$L_\xi^\lambda$ is either $\{0\}$ or an absolutely simple 
$\cH_k(W,\xi)$-module, and we have
\[ \Irr(\cH_k(W,\xi))=\{L_\xi^\mu \mid \mu \in \Lambda_\xi^\circ\} \qquad 
\mbox{where} \qquad \Lambda_\xi^\circ:=\{\lambda \in \Lambda \mid 
L_\xi^\lambda \neq 0\}.\]
In particular, this shows that the algebra $\cH_k(W,\xi)$ is split. 
Furthermore, denoting by $(W_\xi^\lambda:L_\xi^\mu)$ the multiplicity of 
$L_\xi^\mu$ as a composition factor of $W_\xi^\lambda$, we have
\begin{equation*}
\left\{\begin{array}{l} \quad (W_\xi^\mu:L_\xi^\mu)=1 \quad \mbox{for any
$\mu\in \Lambda_\xi^\circ$},\\[1mm] \quad (W_\xi^\lambda:L_\xi^\mu)=0\quad
\mbox{unless $\lambda=\mu$ or $\ba_\mu< \ba_\lambda$}. \end{array}\right. 
\tag{$\Delta$}
\end{equation*}
Thus, the theory of cellular algebras provides a general method for
constructing the irreducible representations of the specialized algebra 
$\cH_k(W,\xi)$.

\medskip
\begin{prop} \label{prop11} Assume that $P_W(\xi)\neq 0$. Then 
$\cH_k(W,\xi)$ is semisimple, $\Lambda=\Lambda_\xi^\circ$ and 
$W_\xi^\lambda=L_\xi^\lambda$ for all $\lambda \in \Lambda$.
\end{prop}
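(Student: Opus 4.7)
The plan is to first establish semisimplicity of $\cH_k(W,\xi)$ from the hypothesis $P_W(\xi)\neq 0$, and then deduce the two remaining claims by a dimension count.

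For semisimplicity, I would appeal to the standard criterion (see \cite[Thm.~7.4.7]{ourbuch}) that a specialization of a symmetric algebra with Schur elements $\bc_\lambda$ is semisimple if and only if the image of every $\bc_\lambda$ in $k$ under $\theta$ is nonzero. By Remark~\ref{equalp} we have $\bc_\lambda=f_\lambda u^{-\ba_\lambda}\tilde{\bc}_\lambda$ with $\tilde{\bc}_\lambda \in \Z[u]$ dividing $P_W$. By the very definition of bad primes, the integer $f_\lambda$ has only bad prime divisors; but bad primes are invertible in $R$ by our choice of that ring, hence $f_\lambda\neq 0$ in $k$. Next, $\xi\neq 0$ since $\xi=\theta(v)^2$ and $v$ is a unit in $A$, so $\xi^{-\ba_\lambda}$ makes sense and is nonzero. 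Finally $\tilde{\bc}_\lambda(\xi)\neq 0$ because $\tilde{\bc}_\lambda$ divides $P_W$ in $\Q[u]$ and $P_W(\xi)\neq 0$. Thus $\theta(\bc_\lambda)\neq 0$ for every $\lambda\in\Lambda$, and semisimplicity follows.

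Now I would combine semisimplicity with the cellular structure to conclude the second and third assertions at one stroke. By the discussion just above the proposition, $\{L_\xi^\mu\mid \mu\in\Lambda_\xi^\circ\}$ is a complete set of isomorphism classes of the (absolutely) simple $\cH_k(W,\xi)$-modules; in particular the algebra is split. Since $\Q$ is a splitting field for $W$ and the $E^\lambda$ exhaust $\Irr(W)$, we have $\sum_{\lambda\in\Lambda}d_\lambda^{\,2}=|W|=\dim_k\cH_k(W,\xi)$. Split semisimplicity and the dimension formula $\dim\cH_k(W,\xi)=\sum_{\mu\in\Lambda_\xi^\circ}(\dim L_\xi^\mu)^2$ then yield the chain
\[
|W|=\sum_{\mu\in\Lambda_\xi^\circ}(\dim L_\xi^\mu)^2 \;\leq\; \sum_{\mu\in\Lambda_\xi^\circ}d_\mu^{\,2} \;\leq\; \sum_{\lambda\in\Lambda}d_\lambda^{\,2}=|W|,
\]
where the first inequality uses $\dim L_\xi^\mu\leq \dim W_\xi^\mu=d_\mu$ (since $L_\xi^\mu$ is a quotient of $W_\xi^\mu$), and the second uses $\Lambda_\xi^\circ\subseteq\Lambda$. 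Equality throughout forces $\Lambda_\xi^\circ=\Lambda$ and $\dim L_\xi^\mu=d_\mu=\dim W_\xi^\mu$ for every $\mu$, whence $L_\xi^\mu=W_\xi^\mu$ for all $\mu\in\Lambda$.

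I do not anticipate a serious obstacle here: the only non-routine ingredient is the Schur-element criterion for semisimplicity, and once that is in hand the proposition is essentially a bookkeeping exercise involving the dimensions on both sides. The point to be slightly careful about is the observation that $\xi\neq 0$ is automatic from the fact that $v$ is invertible in $A$, which is what legitimizes the factor $\xi^{-\ba_\lambda}$ in the Schur element.
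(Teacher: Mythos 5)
Your argument is correct. The first half coincides with the paper's proof: both invoke the Schur-element criterion for symmetric algebras (\cite[7.4.7]{ourbuch}) and check $\theta(\bc_\lambda)\neq 0$ via the factorisation $\bc_\lambda=f_\lambda u^{-\ba_\lambda}\tilde{\bc}_\lambda$ from Remark~\ref{equalp}; your handling of the integer factor (bad primes are invertible in $R$, hence $f_\lambda$ maps to a unit of $k$) is just a restatement of the paper's observation that $\operatorname{char}(k)$ is $0$ or a good prime, and your remark that $\xi\neq 0$ because $v$ is a unit in $A$ is the right way to justify the factor $\xi^{-\ba_\lambda}$. Where you diverge is the second half: the paper simply quotes the general cellular-algebra result \cite[3.8]{GrLe} to get $\Lambda=\Lambda_\xi^\circ$ and $W_\xi^\lambda=L_\xi^\lambda$, whereas you rederive this by a Wedderburn dimension count, using that $\{L_\xi^\mu\mid\mu\in\Lambda_\xi^\circ\}$ is an irredundant complete set of absolutely simple modules (the splitness statement established just before the proposition), that $\dim W_\xi^\mu=d_\mu$, and that $\sum_\lambda d_\lambda^2=|W|=\dim_k\cH_k(W,\xi)$ since $\Q$ splits $W$. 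This buys a short self-contained argument in place of an external citation, at the cost of leaning on the completeness and pairwise non-isomorphy of the $L_\xi^\mu$, which is itself part of the Graham--Lehrer theory; so the two routes are of comparable depth, and yours is a legitimate alternative.
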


\begin{proof} Recall from Remark~\ref{equalp} that, for each $\lambda
\in \Lambda$, we have $\bc_\lambda=f_\lambda u^{-\ba_\lambda}\, 
\tilde{\bc}_\lambda$ where $\tilde{\bc}_\lambda \in {\Z}[u]$ is monic 
and divides $P_W$. Hence, since the characteristic of $k$ is either $0$ 
or a good prime for $W$, our assumption $P_W(\xi)\neq 0$ implies that we 
also have $\theta(\bc_\lambda) \neq 0$ for all $\lambda \in \Lambda$. A 
general semisimplicity criterion for symmetric algebras (see 
\cite[7.4.7]{ourbuch}) then shows that $\cH_k(W,\xi)$ is semisimple, a 
result first proved by Gyoja--Uno \cite{GyUn}. The remaining statements
concerning the cell representations are contained in \cite[3.8]{GrLe}.  
\end{proof}

\medskip
\begin{cor} \label{detg} Let $\lambda \in \Lambda$ and $G^\lambda$ be the
Gram matrix of the invariant bilinear form $\phi^\lambda$ with respect to
the standard basis of $W^\lambda$. Then $0\neq \det(G^\lambda) 
\in {\Z}[v,v^{-1}]$. Furthermore, let $0 \neq q \in {\Z}[v,v^{-1}]$ be 
irreducible such that $q$ divides $\det(G^\lambda)$. Then either $\pm q$ 
is a bad prime number or $q$ divides $P_W$.
\end{cor}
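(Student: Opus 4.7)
The plan is to establish the three claims separately via the cellular structure (Theorem~\ref{Hcellbase}), the integrality remark (Remark~\ref{integral}), and the semisimplicity criterion (Proposition~\ref{prop11}).

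First I would check integrality and nonvanishing. By Theorem~\ref{Hcellbase}, each $C_{\fs,\ft}^\lambda$ is a $\Z$-linear combination of the $c_w^\dagger$; since the Kazhdan--Lusztig basis $\{c_w\}$ and the involution $h \mapsto h^\dagger$ are both defined over $\Z[v,v^{-1}]$, expanding $c_w^\dagger$ in the $T$-basis keeps the coefficients in $\Z[v,v^{-1}]$. Remark~\ref{integral} then gives $\phi^\lambda(C_\fs,C_\ft) = r_{C_{\fs,\ft}^\lambda}(\fs,\fs) \in \Z[v,v^{-1}]$, so the entries of $G^\lambda$, and hence $\det(G^\lambda)$, lie in $\Z[v,v^{-1}]$. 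For nonvanishing, I would use that $W_K^\lambda \cong E_v^\lambda$ is simple and that the cell modules $\{W_K^\lambda\}$ exhaust $\Irr(\cH_K)$; consequently $L_K^\lambda = W_K^\lambda/\mathrm{rad}(\phi_K^\lambda)$ is nonzero, which forces $\mathrm{rad}(\phi_K^\lambda)=0$, so $\phi_K^\lambda$ is nondegenerate and $\det(G^\lambda) \neq 0$ in $A$.

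Next, for the divisor classification, let $q \in \Z[v,v^{-1}]$ be an irreducible dividing $\det(G^\lambda)$. Up to the units $\pm v^m$, either $q=\pm p$ is a prime integer, or $q$ is a primitive irreducible in $\Z[v]$ of positive degree with nonzero constant term. In the first case, assume $p$ is good; then $p \in R^\times$, so reducing integer coefficients modulo $p$ yields a ring homomorphism $\theta\colon A \to \F_p(v)$ with $\theta(u) = v^2$. Since $P_W$ is monic in $u$, its image $P_W(v^2) \in \F_p[v]$ is nonzero, and Proposition~\ref{prop11} yields that $\cH_{\F_p(v)}(W,v^2)$ is semisimple and $W_{v^2}^\lambda = L_{v^2}^\lambda$, forcing $\theta(\det G^\lambda) \neq 0$. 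But $p \mid \det(G^\lambda)$ gives $\theta(\det G^\lambda)=0$, a contradiction, so $p$ must be bad. In the second case, fix a root $\alpha \in \overline{\Q}^\times$ of $q$ and take $\theta\colon A \to \C$, $v \mapsto \alpha$, so $\theta(\det G^\lambda)=0$. Since $\mathrm{char}\,\C=0$ is a good prime, Proposition~\ref{prop11} would force $\det(G_{\alpha^2}^\lambda) \neq 0$ if $P_W(\alpha^2) \neq 0$; hence $P_W(\alpha^2)=0$, i.e.\ $\alpha$ is a root of $P_W(v^2) \in \Z[v]$. Since $q$ is (up to a unit) the minimal polynomial of $\alpha$ over $\Q$, Gauss's lemma yields $q \mid P_W$ in $\Z[v,v^{-1}]$.

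No single step seems to pose a serious obstacle: the argument is a clean application of the specialization principle together with Proposition~\ref{prop11}. The only point of care is confirming that the Kazhdan--Lusztig basis, and hence the cellular basis of Theorem~\ref{Hcellbase}, is defined over $\Z[v,v^{-1}]$ (not merely over $A$), so that Remark~\ref{integral} applies without clearing denominators.
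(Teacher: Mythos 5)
Your proposal is correct and follows essentially the paper's route: integrality of the Gram entries via Remark~\ref{integral}, nonvanishing of $\det(G^\lambda)$ from generic (split semi)simplicity of $\cH_K$, and classification of the irreducible divisors by specialising at $q$ and appealing to the semisimplicity criterion (the paper does the last step uniformly through the fraction field of $A/(q)$ and deduces the vanishing of a Schur element $\bc_\mu$, then splits into $q\mid f_\mu$ versus $q\mid\tilde{\bc}_\mu\mid P_W$, whereas you split into the two cases first and quote Proposition~\ref{prop11} in contrapositive form---a cosmetic difference). One slip to correct: for a good prime $p$ you assert ``$p\in R^\times$'', but by the paper's choice of $R$ it is the \emph{bad} primes that are inverted, so a good $p$ instead generates a prime ideal of $R$; this is precisely what makes the reduction $A\to\F_p(v)$ well defined, and with that correction your case-1 argument stands as written.
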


\begin{proof} First note that, by Remark~\ref{integral}, all entries of 
$G^\lambda$ lie in ${\Z}[v,v^{-1}]$. Furthermore, by Proposition~\ref{prop11}, we have $\det(G^\lambda)\neq 0$. Now consider the prime ideal $(q)$ 
and let $F$ be the field of fractions of $A/(q)$. Then we have a 
specialisation $\alpha \colon A \rightarrow F$. Let $\cH_F(W,\alpha(u))$ 
be the specialised algebra. Let $G^\lambda_F$ be the matrix obtained by 
applying $\alpha$ to all coefficients of $G^\lambda$. Then $G^\lambda_F$ 
is the Gram matrix of the induced bilinear form $\phi^\lambda_F$ on the 
specialised cell module $W^\lambda_F$. If $q$ divides $\det(G^\lambda)$, 
then $\det(G_F^\lambda)=0$ and so $\cH_F(W,\alpha(u))$ will not be 
semisimple; see \cite[3.8]{GrLe}. By the general semisimplicity criterion 
in \cite[7.4.7]{ourbuch}, we deduce that $\alpha(\bc_\mu)=0$ for some 
$\mu \in \Lambda$. Now there are two cases.

If $q \in \Z$, then this implies that $q$ must divide $f_\mu$ and
so $\pm q$ is a bad prime. 

If $q$ is an irreducible non-constant polynomial, then $q$ must divide
$\bc_\mu$. By Remark~\ref{equalp}, $\bc_\mu$ divides $P_W$. Hence, we 
deduce that $q$ divides $P_W$. 
\end{proof}

\medskip
\begin{exmp} \label{typeA} Let $W$ be of type $A_{n-1}$. Then $W$ can  be
identified with the symmetric group $\fS_n$ and $\Lambda$ consists of all 
partitions $\lambda \vdash n$. A special feature of this case is that 
$f_\lambda=1$ for all $\lambda \in \Lambda$. By \cite[Exp.~4.2]{mycell}, 
the linear combinations in Theorem~\ref{Hcellbase} will only have one
non-zero term, with coefficient $1$, i.e., the Kazhdan--Lusztig basis 
itself is a cellular basis. More precisely, for $\lambda \in \Lambda$, 
let $w_\lambda$ be the longest element in the corresponding Young 
subgroup $\fS_\lambda$ of $W=\fS_n$. Now, by \cite[\S 5]{KaLu}, the 
Kazhdan--Lusztig left and right cells of $W$ are given by the 
Robinson--Schensted correspondence. This explicit description shows that, 
if $\fC_\lambda$ 
denotes the left cell containing $w_\lambda$, we have  
\[ \fC_\lambda=\{d(\fs)w_\lambda \mid \fs \in M(\lambda)\}\]
where the elements $d(\fs)$ ($\fs \in M(\lambda)$) are certain distinguished 
left coset representatives of $\fS_\lambda$ in $W=\fS_n$. Furthermore, given
$\fs,\ft \in M(\lambda)$, there is a unique $w_\lambda(\fs,\ft) \in W$ such 
that $w_\lambda(\fs,\ft)$ lies in the same right cell as $d(\fs)w_\lambda$ 
and in the same left cell as $w_\lambda d(\ft)^{-1}$. (See also 
\cite[Rem.~3.9, Cor.~5.6]{mymurph} for further details.) With this 
notation, \cite[Exp.~4.2]{mycell} shows that 
\[ C_{\fs,\ft}^\lambda=c_{w_\lambda(\fs,\ft)}^\dagger \quad \mbox{for all
$\lambda \vdash n$ and $\fs,\ft \in M(\lambda)$}.\]
McDonough and Pallikaros \cite{McPa} showed that the cell modules 
$W^\lambda$ are naturally isomorphic to the Dipper--James Specht modules.
The invariant bilinear form on $W^\lambda$ is given by 
\[ \phi^\lambda(C_{\fs},C_{\ft})=h_{w_\lambda d(\fs)^{-1}, d(\ft)w_\lambda,
w_\lambda} \qquad \mbox{for all $\fs,\ft \in M(\lambda)$}.\]
For connections of these bilinear forms with the topology of Springer fibres,
see Fung \cite{Fung}. 

Thus, for general $\cH$, the cell modules $W^\lambda$ arising from 
Theorem~\ref{Hcellbase} can indeed be regarded as analogues of the 
Dipper--James Specht modules in type $A_{n-1}$.
\end{exmp}

\medskip
\begin{exmp} \label{expG2} Let $W$ be the Weyl group of type $G_2$
where $S=\{s_1,s_2\}$ and $(s_1s_2)^6=1$. We have 
$\Irr(W)=\{{\bf 1},\varepsilon_1,\varepsilon_2, \varepsilon,r,r'\}$
where ${\bf 1}$ is the unit representation, $\varepsilon$ is the sign 
representation,  $\varepsilon_1$, $\varepsilon_2$ have dimension one, $r$
is the reflection representation and $r'$ is another representation of 
dimension two. The invariants $\ba_\lambda$ and $f_\lambda$ are given by
\begin{gather*}
\ba_{\bf 1}=0, \qquad \ba_{\varepsilon_1}=\ba_{\varepsilon_2}=\ba_r=
\ba_{r'}=1, \qquad \ba_{\varepsilon}=6;\\
f_{\bf 1}=f_{\varepsilon}=1,\qquad f_{\varepsilon_1}=f_{\varepsilon_2}=3,
\quad f_r=6, \qquad f_{r'}=2.
\end{gather*}
Hence, the bad primes are $2$ and $3$. A cellular basis as in 
Theorem~\ref{Hcellbase} is given as follows:
\begin{alignat*}{2}
C_{1,1}^{\bf 1}&=c_1^\dagger, \qquad & C_{1,1}^\varepsilon &= 
c_{w_0}^\dagger, \\
C_{1,1}^{\varepsilon_1}&= c_{s_2}^\dagger- c_{s_2s_1s_2}^\dagger+
c_{s_2s_1s_2s_1s_2}^\dagger,\qquad &
C_{1,1}^{\varepsilon_2}&=c_{s_1}^\dagger- c_{s_1s_2s_1}^\dagger+
c_{s_1s_2s_1s_2s_1}^\dagger,\\
C_{1,1}^r &= 3c_{s_1}^\dagger+6c_{s_1s_2s_1}^\dagger+
3c_{s_1s_2s_1s_2s_1}^\dagger,
\qquad & C_{1,1}^{r'}&=c_{s_1}^\dagger-c_{s_1s_2s_1s_2s_1}^\dagger,\\
C_{1,2}^r &= -3c_{s_1s_2}^\dagger-3c_{s_1s_2s_1s_2}^\dagger, \qquad &
C_{1,2}^{r'} &= -c_{s_1s_2}^\dagger+c_{s_1s_2s_1s_2}^\dagger,
\\ C_{2,1}^r &=-3c_{s_2s_1}^\dagger-3c_{s_2s_1s_2s_1}^\dagger, 
\qquad & C_{2,1}^{r'}&=-c_{s_2s_1}^\dagger+c_{s_2s_1s_2s_1}^\dagger,\\ 
C_{2,2}^r &= c_{s_2}^\dagger+2c_{s_2s_1s_2}^\dagger+
c_{s_2s_1s_2s_1s_2}^\dagger, \qquad
& C_{2,2}^{r'}&=c_{s_2}^\dagger-c_{s_2s_1s_2s_1s_2}^\dagger.
\end{alignat*}
To find these expressions, we perform computations similar to those in 
\cite[Exp.~4.3]{mycell} (where type $B_2$ was considered). Once this is 
done, one can then also check directly that the above elements form a 
cellular basis. The Gram matrices of the invariant bilinear forms on the
cell representations $W^\lambda$ are given by 
\begin{gather*}
G^{\bf 1}=\begin{bmatrix} \,1\,\end{bmatrix},\quad 
G^{\varepsilon}=\begin{bmatrix} v^{-6}P_W \end{bmatrix},
\quad G^{\varepsilon_1}=G^{\varepsilon_2}=
\begin{bmatrix} 3(v+v^{-1})\end{bmatrix}, \\
G^{r}=\left[\begin{array}{cc} 18(v+v^{-1}) & -18 \\ -18& 6(v+v^{-1})
\end{array}\right],\qquad
G^{r'}=\left[\begin{array}{cc} 2(v+v^{-1}) & -2 \\ -2 & 2(v+v^{-1})
\end{array}\right],
\end{gather*}
where $P_W=(v^{12}-1)(v^4-1)/(v^2-1)^2$ is the Poincar\'e polynomial of $W$.

Now let $\theta \colon A \rightarrow k$ be a specialisation; note that the 
characteristic of $k$ will be either $0$ or a prime $\neq 2,3$. Let $e\geq 
2$ be minimal such that $1+\xi+\xi^2+\cdots+\xi^{e-1}=0$. Thus, either 
$\xi=1$ and $e$ is the characteristic of $k$, or $e$ is the multiplicative 
order of $\xi$ in $k^\times$. We see that the above Gram matrices remain 
non-singular after specialisation unless $\xi \neq 1$ and $e\in \{2,3,6\}$. 
Thus, we obtain non-trivial decomposition numbers only for $e\in\{2,3,6\}$. 
In these cases, the sets $\Lambda^\circ_\xi$ and the dimensions of 
$L^\mu_\xi$ for $\mu \in \Lambda^\circ_\xi$ are given as follows.
\[ \renewcommand{\arraystretch}{1.2}
\begin{array}{ccc} \hline \multicolumn{3}{c}{e=2} \\ \hline 
\Lambda^\circ_\xi & \quad \ba_\mu \quad & \dim L^\mu_\xi \\ \hline
{\bf 1} & 0 & 1 \\ 
r       & 1 & 2 \\
r'      & 1 & 2 \\ \hline &&\\ && \end{array}\qquad\quad
 \begin{array}{ccc} \hline \multicolumn{3}{c}{e=3} \\ \hline 
\Lambda^\circ_\xi & \quad \ba_\mu \quad & \dim L^\mu_\xi \\ \hline
{\bf 1}       & 0 & 1 \\ 
\varepsilon_1 & 1 & 1 \\
\varepsilon_2 & 1 & 1 \\
r             & 1 & 2 \\ 
r'            & 1 & 1 \\ \hline\end{array}\qquad\quad
\begin{array}{ccc} \hline \multicolumn{3}{c}{e=6} \\ \hline 
\Lambda^\circ_\xi & \quad \ba_\mu \quad & \dim L^\mu_\xi \\ \hline
{\bf 1}       & 0 & 1 \\ 
\varepsilon_1 & 1 & 1 \\
\varepsilon_2 & 1 & 1 \\
r             & 1 & 1 \\ 
r'            & 1 & 2 \\ \hline\end{array}\]
In particular, we notice that the classification of the irreducible 
representations and their dimensions only depend on~$e$, but not on the 
particular value of $\xi$ or the characteristic of~$k$.  Thus, we have 
verified in a particular example the general phenomenon which is expressed 
in James' conjecture.
\end{exmp}

\medskip
\begin{rem} \label{brauer} The decomposition matrix $D_\xi$ can also
be interpreted in the framework of Brauer's modular representation theory
of associative algebras; see \cite[\S I.1.17]{Feit}. Indeed, let us assume
that $k$ is the field of fractions of the image of $\theta$. By 
\cite[Exc.~7.8]{ourbuch}, there exists a discrete valuation ring $\cO 
\subseteq K$ with maximal ideal $\fp$ such that $A \subseteq \cO$ and
$\fp \cap A=\ker(\theta)$. Let $k_{\fp} \supseteq k$ be the residue field 
of $\cO$. Since $\cH_k(W,\xi)$ is split, the scalar extension from $k$ to
$k_\fp$ induces a bijection $\Irr(\cH_k(W,\xi))\stackrel{\sim}{\rightarrow}
\Irr(\cH_{k_{\fp}}(W,\xi))$. Identifying $\Irr(\cH_k(W,\xi))$ and 
$\Irr(\cH_{k_\fp}(W,\xi))$ via this isomorphism, we obtain a well-defined
decomposition map 
\[ d_\xi \colon R_0(\cH_K)\rightarrow R_0(\cH_k(W,\xi))\]
where $R_0(\cH_K)$ and $R_0(\cH_k(W,\xi))$ denote the Grothendieck groups 
of finite-dimensional representations of $\cH_K$ and $\cH_k(W,\xi)$, 
respectively. Since each cell representation $W^\lambda$ is defined over 
$A$ and $W^\lambda_K \cong E^\lambda_v$, we conclude that 
\[ d_\xi([E^\lambda_v])=\sum_{\mu \in \Lambda_\xi^\circ} (W_\xi^\lambda: 
L^\mu_{\xi}) \, [L^\mu_\xi] \qquad \mbox{for all $\lambda\in \Lambda$},\]
where $[E^\lambda_v]$, $[L^\mu_\xi]$ denote the classes of $E^\lambda_v$, 
$L^\mu_\xi$ in the respective Grothendieck groups.
(Note that, by \cite[Ex. 6.16]{CR1}, we do not need to pass to the completion
of $\cO$, as is usually done in Brauer's modular representation theory.)
\end{rem}

\medskip
\begin{defn} \label{Bgraph} The {\em Brauer graph} of $\cH$ with respect 
to $\theta \colon A \rightarrow k$ is the graph with vertices labelled by 
the elements of $\Lambda$ and edges given as follows. Let $\lambda \neq
\lambda'$ in $\Lambda$. Then the vertices labelled by $\lambda$ and 
$\lambda'$ are joined by an edge if there exists some $\mu \in 
\Lambda^\circ_\xi$ such that $(W_\xi^\lambda:L^\mu) \neq 0$ and 
$(W_\xi^{\lambda'}:L^\mu)\neq 0$. The connected components of this graph 
define a partition of $\Lambda$ which are called the {\em $\xi$-blocks} 
of $\Lambda$ (or of $\Irr(\cH_K)$ or of $\Irr(W)$). 
\end{defn}

\medskip
Let $\Lambda=\Lambda_1 \amalg \Lambda_2 \amalg \cdots \amalg \Lambda_r$
be the partition of $\Lambda$ into $\xi$-blocks. Then we also have
\[ \Lambda_\xi^\circ=\Lambda_{\xi,1}^\circ \amalg \Lambda_{2,\xi}^\circ 
\amalg \cdots \amalg \Lambda_{\xi,r}^\circ\qquad \mbox{where}\qquad 
\Lambda_{\xi,i}^\circ:=\Lambda_i \cap \Lambda_\xi^\circ.\]
If we order the elements of $\Lambda$ and of $\Lambda_\xi^\circ$ accordingly, 
we obtain a block diagonal shape for $D_\xi$:
\[ D_\xi=\left(\begin{array}{cccc} D_{\xi,1} & 0 & \ldots & 0 \\
0 & D_{\xi,2} & \ddots & \vdots \\ \vdots & \ddots & \ddots & 0 \\
0 & \ldots & 0 & D_{\xi,r} \end{array}\right),\]
where $D_{\xi,i}$ has rows and columns labelled by the elements of 
$\Lambda_i$ and $\Lambda_{\xi,i}^\circ$, respectively. Thus, in order to 
describe the set $\Lambda_{\xi}^\circ$ and the matrix $D_\xi$, we can 
proceed block by block. Note that, by Remark~\ref{brauer}, the blocks
of $\cH$ as defined above really correspond to blocks in the sense of
Brauer's modular representation theory.

\section{The general version of James' conjecture} \label{sec3}
We keep the general setting of the previous section. Let $\cH$ be an 
Iwahori--Hecke algebra associated with a finite Weyl group $W$, defined 
over the ring $A=R[v,v^{-1}]$ where $R \subseteq \Q$ is fixed as in the 
remarks just after Theorem~\ref{Hcellbase}. Then we have a cellular basis 
$\{C_{\fs,\ft}^\lambda\}$ and cell representations $\{W^\lambda \mid 
\lambda \in \Lambda\}$ for $\cH$. 

Now let $\theta \colon A \rightarrow k$ be a ring homomorphism into a 
field $k$. Note that the characteristic of $k$ will be either $0$ or a 
prime $p$ which is not bad for $W$. We obtain a corresponding specialised 
algebra $\cH_k(W,\xi)$ where $\xi=\theta(u) \in k^\times$. Recall that 
\[ \Irr(\cH_{k}(W,\xi))=\{L_{\xi}^\mu \mid \mu \in
\Lambda_{\xi}^\circ\}.\]
As in Remark~\ref{brauer}, we have a decomposition map
$d_{\xi} \colon R_0(\cH_K)\rightarrow R_0(\cH_k(W,\xi))$
such that
\[ d_{\xi}([E_v^\lambda])=\sum_{\mu \in \Lambda_{\xi}^\circ}
(W_{\xi}^\lambda: L^\mu_{\xi}) \, [L^\mu_{\xi}] \qquad
\mbox{for all $\lambda\in \Lambda$}.\]
Following Dipper--James \cite{DJ}, we set 
\[ e=\min\{i \geq 2\mid 1+\xi+\xi^2+\cdots +\xi^{i-1}=0\}.\]
(We set $e=\infty$ if no such $i$ exists.) 
We assume from now on that $\mbox{char}(k)=\ell>0$ and $e<\infty$. Let 
$\zeta_e=\sqrt[e]{1} \in \C$ and consider the Iwahori--Hecke algebra 
$\cH_{\C}(W,\zeta_e)$ arising from the specialisation 
\[\theta_e\colon A\rightarrow {\C},\quad v\mapsto\zeta_{2e}=\sqrt[2e]{1}.\]
We can apply the previous discussion to the algebra $\cH_{\C}(W,
\zeta_e)$ as well. Thus, we have 
\[ \Irr(\cH_{\C}(W,\zeta_e))=\{L_{\zeta_e}^\mu \mid \mu \in 
\Lambda_{\zeta_e}^\circ\}.\]
Furthermore, there is a decomposition map $d_{\zeta_e} \colon 
R_0(\cH_K)\rightarrow R_0(\cH_\C(W,\zeta_e))$ such that 
\[ d_{\zeta_e}([E_v^\lambda])=\sum_{\mu \in \Lambda_{\zeta_e}^\circ}
(W_{\zeta_e}^\lambda: L^\mu_{\zeta_e}) \, [L^\mu_{\zeta_e}] \qquad 
\mbox{for all $\lambda\in \Lambda$}.\]
We will want to compare the representations of $\cH_k(W,\xi)$ and 
$\cH_{\C}(W,\zeta_e)$. For this purpose, the following remark 
will be relevant.

\medskip
\begin{rem} \label{compare} For any $d\geq 1$, we denote by
$\Phi_d\in {\Z}[u]$ the $d$th cyclotomic polynomial. Note that we have
\[ \Phi_{d}(v^2)=\left\{\begin{array}{cl} \Phi_{2d}(v) & \qquad \mbox{if
$d$ is even},\\\Phi_d(v)\Phi_d(-v) &\qquad \mbox{if $d$ is odd}.
\end{array}\right.\]
Now, in view of the definition of $e$, it is clear that $\Phi_e(\xi)=0$.
Furthermore, note that $\theta(v)^2=\xi$. Hence, choosing a square root of 
$\xi$ in $k^\times$ appropriately, we can assume that $\Phi_{2e}(\theta(v))
=0$. (If $\mbox{char}(k)\neq 2$, we also have $\Phi_e(\theta(v))\neq 0$.)
Consequently, there exists a ring homomorphism $R[\zeta_{2e}]\rightarrow k$,
$r\mapsto \bar{r}$, such that $\theta(a)=\overline{\theta_e(a)}$ 
for all $a\in A$. Let $\cO\subseteq {\Q}(\zeta_{2e})$ be the localisation 
of $R[\zeta_{2e}]$ in the prime ideal $\fq=\{r \in R[\zeta_{2e}] \mid 
\bar{r}=0\}$. Then $\cO$ is a discrete valuation ring whose residue field 
can be identified with a subfield of $k$. By ``$\fq$-modular reduction'' 
(see \cite[\S I.1.17]{Feit}), we obtain a well-defined decomposition map 
\[ d_{\xi}^e \colon R_0(\cH_{\Q(\zeta_{2e})}(W,\zeta_e))\rightarrow 
R_0(\cH_k(W,\xi)).\]
Note that the scalar extension from ${\Q}(\zeta_{2e})$ to $\C$ defines a 
bijection 
\[ \Irr(\cH_{\Q(\zeta_{2e})}(W,\zeta_e)) \stackrel{\sim}{\rightarrow}
\Irr(\cH_{\C}(W,\zeta_e)).\]
Via this bijection, we can identify $R_0(\cH_{\Q(\zeta_{2e})}(W,\zeta_e))$ 
and $R_0(\cH_{\C}(W,\zeta_e))$, and regard $d_\xi^e$ as a map from
$R_0(\cH_{\C}(W,\zeta_e))$ to $R_0(\cH_k(W,\xi))$. Let us write
\[ d_{\xi}^e([L^\nu_{\zeta_e}])=\sum_{\mu \in \Lambda_{\xi}^\circ} 
a_{\nu\mu} \, [L_\xi^\mu] \qquad \mbox{for any $\nu \in 
\Lambda_{\zeta_e}^\circ$},\]
where $a_{\nu\mu}\in {\Z}_{\geq 0}$.
Following James \cite{Ja}, the matrix $A_\xi^e:=(a_{\nu\mu})$ is called the 
{\em adjustment matrix} associated to the specialisation $\theta$. By
a general factorisation result for decomposition maps, we have $d_\xi=
d_{\xi}^e \circ d_{\zeta_e}$ or, in other words,

\smallskip
\begin{center}
\fbox{$\displaystyle (W_\xi^\lambda:L_\xi^\mu)=\sum_{\nu \in 
\Lambda_{\zeta_e^\circ}} a_{\nu\mu}\, (W^\lambda_{\zeta_e}:L_{\zeta_e}^\nu)
\quad\mbox{for all $\lambda\in\Lambda$ and $\mu\in\Lambda_\xi^\circ$}$.}
\end{center}
\smallskip
This result first appeared in \cite[Theorem~5.3]{mybaum}; see also
\cite[Prop.~2.5]{GeRo1}, \cite[Prop.~2.6]{mybour} for analogous statements
in more general situations.
\end{rem}

\medskip
\begin{lem} \label{factor} In the above setting, the following hold.
\begin{description}
\item[(a)] Given $\mu \in \Lambda_\xi^\circ$ and $\nu \in 
\Lambda_{\zeta_e}^\circ$, we have $a_{\nu\mu}=0$ unless $\nu=\mu$ or 
$\ba_\mu<\ba_\nu$.
\item[(b)] We have $\Lambda_\xi^\circ \subseteq \Lambda_{\zeta_e}^\circ$ and
$a_{\mu\mu}=1$ for all $\mu \in \Lambda_\xi^\circ$.
In particular, we have $\Lambda_\xi^\circ=\Lambda_{\zeta_e}^\circ$ if 
these two sets have the same cardinality.
\item[(c)] We have  $\dim L_\xi^\mu\leq\dim L_{\zeta_e}^\mu$ 
for all $\mu\in \Lambda_\xi^\circ$. 
\end{description}
\end{lem}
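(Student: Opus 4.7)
The key tool is the boxed factorisation identity
\[ (W_\xi^\lambda:L_\xi^\mu)=\sum_{\nu\in\Lambda_{\zeta_e}^\circ}a_{\nu\mu}\,(W_{\zeta_e}^\lambda:L_{\zeta_e}^\nu) \]
from Remark~\ref{compare}, applied in combination with the triangularity property~($\Delta$), which is available for both the $\xi$- and the $\zeta_e$-decomposition numbers. The plan is to prove the three assertions in order, treating each as a direct consequence of this identity and~($\Delta$).

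For (a), fix $\mu\in\Lambda_\xi^\circ$ and proceed by induction on $\ba_\nu$. Given $\nu\in\Lambda_{\zeta_e}^\circ$ with $\nu\ne\mu$ and $\ba_\nu\leq\ba_\mu$, I would substitute $\lambda=\nu$ into the boxed identity. By~($\Delta$) the left-hand side vanishes. On the right, ($\Delta$) applied over $\C$ ensures that only the index $\nu'=\nu$ (contributing $a_{\nu\mu}$) and indices $\nu'\in\Lambda_{\zeta_e}^\circ$ with $\ba_{\nu'}<\ba_\nu$ can give nonzero contributions. The case $\nu'=\mu$ is excluded because $\ba_\mu\geq\ba_\nu>\ba_{\nu'}$, and for any other $\nu'$ we have $\nu'\ne\mu$ and $\ba_{\nu'}<\ba_\nu\leq\ba_\mu$, so the inductive hypothesis gives $a_{\nu'\mu}=0$. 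The identity then reduces to $0=a_{\nu\mu}$.

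For (b), I would substitute $\lambda=\mu$ into the identity. The left-hand side is $1$ by~($\Delta$). On the right, ($\Delta$) restricts the sum to indices $\nu$ with $\nu=\mu$ or $\ba_\nu<\ba_\mu$, and part~(a) shows that all strict-inequality terms satisfy $a_{\nu\mu}=0$. Hence only the index $\nu=\mu$ can contribute. Since the left-hand side is nonzero, the index $\mu$ must actually appear in the sum, forcing $\mu\in\Lambda_{\zeta_e}^\circ$; the surviving equation then reads $1=a_{\mu\mu}\cdot(W_{\zeta_e}^\mu:L_{\zeta_e}^\mu)=a_{\mu\mu}$. The ``in particular'' clause is the trivial observation that an inclusion of equal-cardinality finite sets is an equality.

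For (c), I would apply the dimension functional (a well-defined $\Z$-linear map on Grothendieck groups) to the defining identity
\[ d_\xi^e([L_{\zeta_e}^\mu])=\sum_{\mu'\in\Lambda_\xi^\circ}a_{\mu\mu'}\,[L_\xi^{\mu'}], \qquad \mu\in\Lambda_\xi^\circ\subseteq\Lambda_{\zeta_e}^\circ. \]
Non-negativity of the $a_{\mu\mu'}$ together with $a_{\mu\mu}=1$ from~(b) yields $\dim L_{\zeta_e}^\mu\geq\dim L_\xi^\mu$ at once. I do not expect a serious obstacle: the whole argument reduces to triangular bookkeeping, and the only step requiring any care is the induction in~(a), where one must confirm that the index $\nu'=\mu$ is indeed excluded and that the remaining indices really do fall under the inductive hypothesis.
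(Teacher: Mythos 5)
Your proposal is correct and follows essentially the same route as the paper: everything comes from the boxed factorisation identity combined with the triangularity relations ($\Delta$), and your parts (b) and (c) match the paper's argument almost verbatim. The only difference is in (a), where the paper gets the conclusion in one line by assuming $a_{\nu\mu}\neq 0$ and using non-negativity of all terms to force $(W_\xi^\nu:L_\xi^\mu)>0$, whereas your contrapositive induction on $\ba_\nu$ is a slightly longer but equally valid bookkeeping of the same identity.
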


\begin{proof} Let $\lambda \in \Lambda$, $\mu \in \Lambda_{\xi}^\circ$
and $\nu \in \Lambda_{\zeta_e}^\circ$. Recall the relations ($\Delta$) 
from Section~2: if $(W_\xi^\lambda:L_\xi^\mu)\neq 0$, then $\ba_\mu\leq 
\ba_\lambda$ with equality only for $\lambda=\mu$; furthermore,
$(W_\xi^\mu:L_\xi^\mu)=1$. A similar statement holds for the decomposition
numbers $(W_{\zeta_e}^\lambda:L_{\zeta_e}^\nu)$.

(a) Assume that $a_{\nu\mu}\neq 0$. Then, since 
$(W^\nu_{\zeta_e}:L^\nu_{\zeta_e})=1$, we have 
\[ (W_\xi^\nu: L_\xi^\mu)=\sum_{\nu' \in \Lambda_{\zeta_e}^\circ} 
a_{\nu'\mu}\, (W^\nu_{\zeta_e}:L_{\zeta_e}^{\nu'})>0\]
and so the relations ($\Delta$) imply that $\nu=\mu$ or $\ba_\mu<\ba_\nu$. 

(b) We have $1=(W^\mu_\xi:L^\mu_\xi)=\sum_{\nu' \in \Lambda_{\zeta_e}^\circ}
a_{\nu'\mu}\, (W^\mu_{\zeta_e}:L_{\zeta_e}^{\nu'})$.
So there exists some $\nu'\in \Lambda_{\zeta_e}^\circ$ such that 
$a_{\nu'\mu}\neq 0$ and $(W^\mu_{\zeta_e}:L_{\zeta_e}^{\nu'})\neq 0$.
Consequently, using (a) and the relations ($\Delta$), we have $\ba_\mu
\leq \ba_{\nu'} \leq \ba_{\mu}$ and so $\ba_{\mu}=\ba_{\nu'}$. Thus, we 
must have $\mu=\nu' \in \Lambda_{\zeta_e}^\circ$ and $a_{\mu\mu} \neq 0$. 
Since $(W^\mu_\xi:L^\mu_\xi)=1$, we then also conclude that $a_{\mu\mu}=1$.

(c) Since $\dim L_{\zeta_e}^\mu=\sum_{\nu\in\Lambda_\xi^\circ} a_{\mu\nu}
\dim L_\xi^\nu \geq a_{\mu\mu}\dim L_\xi^\mu$, this follows from (b).
\end{proof}

The observation that $\Lambda_\xi^\circ$ equals $\Lambda_{\zeta_e}^\circ$ 
once we know that these two sets have the same cardinality was first made 
by Jacon \cite[Theorem~3.3]{Jac} (in a slightly different context). 

\medskip
\begin{thm}[Geck--Rouquier \protect{\cite[5.4]{GeRo1}, 
\cite[3.2]{my00b}}] \label{thm13} Assume that $e\ell$ does not divide any 
degree of $W$. Then $|\Irr(\cH_k(W,\xi))|=|\Irr(\cH_{\C}(W,\zeta_e))|$.
\end{thm}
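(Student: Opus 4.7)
The plan is to prove the reverse of the inclusion already supplied by Lemma~\ref{factor}(b), by combining that lemma with a cyclotomic comparison of the Schur elements $\bc_\lambda$. By Lemma~\ref{factor}(b) we have $\Lambda_\xi^\circ \subseteq \Lambda_{\zeta_e}^\circ$, and hence $|\Irr(\cH_k(W,\xi))| \leq |\Irr(\cH_\C(W,\zeta_e))|$; it suffices to prove equality of the two sets.

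The central calculation is with the Schur elements. By Remark~\ref{equalp}, $\bc_\lambda = f_\lambda u^{-\ba_\lambda} \tilde{\bc}_\lambda$ with $\tilde{\bc}_\lambda \in \Z[u]$ monic and dividing $P_W = \prod_i (u^{d_i}-1)/(u-1)$, so $\tilde{\bc}_\lambda$ is a product of cyclotomic polynomials $\Phi_d$ with $d > 1$ and $d$ dividing some degree $d_i$. Since $\ell$ is a good prime for $W$, $f_\lambda$ is invertible in $k$, and therefore $\theta(\bc_\lambda) = 0$ is equivalent to the vanishing of some such $\Phi_d$ (with $\Phi_d \mid \tilde{\bc}_\lambda$) at $\xi$ in $k$. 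A direct computation shows that $\Phi_d(\xi) = 0$ in $k$ if and only if $d = e\ell^j$ for some integer $j \geq 0$, whereas over $\C$ one has $\Phi_d(\zeta_e) = 0$ if and only if $d = e$. The hypothesis $e\ell \nmid d_i$ forces $e\ell^j \nmid d_i$ for every $j \geq 1$, so $\Phi_{e\ell^j}$ does not appear as a factor of any $\tilde{\bc}_\lambda$ for $j \geq 1$. Combining these observations yields, for every $\lambda \in \Lambda$,
\[ \theta(\bc_\lambda) = 0 \text{ in } k \quad \Longleftrightarrow \quad \theta_e(\bc_\lambda) = 0 \text{ in } \C. \]

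To finish, I would invoke the canonical basic set theorem of Geck and Geck--Rouquier (from \cite{GeRo1, my00b}): in the equal-parameter setting considered here, $\Lambda_\xi^\circ$ is characterised, via the $\ba$-function, as a distinguished subset of $\Lambda$ that is determined entirely by the specialisation pattern of the Schur elements $\{\theta(\bc_\lambda)\mid \lambda\in\Lambda\}$. Since this pattern agrees for $\theta$ and $\theta_e$ by the preceding paragraph, the two canonical basic sets coincide, giving $\Lambda_\xi^\circ = \Lambda_{\zeta_e}^\circ$ and hence the theorem.

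The hard part will be this final invocation. The cyclotomic bookkeeping is essentially routine once Remark~\ref{equalp} and the degree tables are in hand, but the canonical basic set theorem itself is a deep input that relies on Kazhdan--Lusztig theory and the Lusztig $\ba$-function. A secondary worry is whether the precise statement one needs---namely that $\Lambda_\xi^\circ$ depends \emph{only} on which $\theta(\bc_\lambda)$ vanish---is available off-the-shelf in exactly this form, or whether one must instead go back into the proof of \cite[Theorem~5.4]{GeRo1} to extract a direct comparison between $\Lambda_\xi^\circ$ and $\Lambda_{\zeta_e}^\circ$.
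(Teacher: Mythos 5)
This theorem is not proved in the paper at all: it is quoted as an external input, with the proof residing in the cited references (and the paper even remarks, right after the statement, that the stronger version with only ``$\ell$ good'' rests on explicit computations for exceptional types together with Ariki--Mathas for classical types). So there is no internal proof to match your argument against; the question is whether your sketch would stand on its own, and it does not. The easy half is fine: Lemma~\ref{factor}(b) gives $\Lambda_\xi^\circ\subseteq\Lambda_{\zeta_e}^\circ$, and your cyclotomic bookkeeping with the Schur elements is correct and is essentially Remark~\ref{signi} --- under the hypothesis that $e\ell$ divides no degree, $\theta(\bc_\lambda)=0$ if and only if $\Phi_e$ divides $\tilde{\bc}_\lambda$, if and only if $\theta_e(\bc_\lambda)=0$.

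The gap is the final invocation. There is no ``canonical basic set theorem'' asserting that $\Lambda_\xi^\circ$ is determined by the vanishing pattern of the Schur elements $\{\theta(\bc_\lambda)\}$. What Geck--Rouquier (and, in this paper, the cellular structure via the relations $(\Delta)$ and Lemma~\ref{factor}) provide is the \emph{existence} of a basic set with a unitriangular decomposition matrix ordered by the $\ba$-function; they give no formula for its cardinality, and in particular no description of it in terms of which $\bc_\lambda$ specialise to zero. (Vanishing of $\theta(\bc_\lambda)$ detects defect-zero behaviour, but says nothing a priori about how many simple modules survive inside positive-defect blocks; e.g.\ in type $G_2$ with $e=2$ one has $\theta(\bc_{\bf 1})=0$ yet ${\bf 1}\in\Lambda_\xi^\circ$.) The assertion you want to invoke --- that $\Lambda_\xi^\circ$ depends only on data that are visibly equal for $\theta$ and $\theta_e$ --- is essentially the content of Theorem~\ref{thm13} itself, so the argument as written is circular, or at best defers the entire difficulty to a statement not available off the shelf; indeed the known proofs of the count proceed quite differently (case-by-case determinations and, for classical types, Ariki--Mathas), which is precisely why the present paper imports the result by citation rather than reproving it.
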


\medskip
Actually, using some explicit computations for $W$ of exceptional type and 
the results of Ariki--Mathas \cite{AM} for $W$ of classical type, one can 
show that the above conclusion holds under the single assumption that $\ell$ 
is a good prime; see \cite{my00b}. However, we do not need this stronger
result here.

\medskip
\begin{rem} \label{signi} The significance of the assumption on $\ell$
in Theorem~\ref{thm13} is as follows. One easily checks that if $f \geq 2$
is such that $\Phi_f(\xi)=0$ then $f=e\ell^i$ for some $i \geq 0$ (see, for 
example, \cite[3.1]{my00b}). Hence, assuming that $e\ell$ does not divide 
any degree of $W$, we have the following implication for any $f \geq 2$:
\[ \Phi_f(\xi)=0 \mbox{ and } \Phi_f \mbox{ divides } P_W \quad 
\Rightarrow \quad f=e.\]
\end{rem}

\medskip
\begin{conj}[General version of James' conjecture] \label{Jconj} 
Recall our standing assumption that $e<\infty$ and $\operatorname{char}
(k)=\ell>0$ where $\ell$ is a good prime for $W$. Assume also that $e\ell$ 
does not divide any degree of $W$. Then the decomposition matrix $D_\xi$ only 
depends on $e$. More precisely, the adjustment matrix $A_\xi^e$ is the
identity matrix or, in other words:
\begin{equation*}
(W_\xi^\lambda:L^\mu_\xi)=(W^\lambda_{\zeta_e}:L^\mu_{\zeta_e})
\qquad \mbox{for all $\;\lambda \in \Lambda \;$ and $\; \mu\in 
\Lambda_\xi^\circ= \Lambda_{\zeta_e}^\circ$}.\tag{J}
\end{equation*}
(Note that we do know that $\Lambda_\xi^\circ=\Lambda_{\zeta_e}^\circ$ by 
Theorem~\ref{thm13} and Lemma~\ref{factor}.)
\end{conj}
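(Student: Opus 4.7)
The plan is to reduce Conjecture~\ref{Jconj} to a dimension equality for the simple modules, and then to a rank equality for the Gram matrices of the cell representations, so that the remaining issue becomes a large but finite calculation.

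First I would exploit Lemma~\ref{factor}. Parts~(a) and~(b) already give that $A_\xi^e=(a_{\nu\mu})$ is lower unitriangular with respect to the partial order by decreasing $\ba$-value: $a_{\mu\mu}=1$ and $a_{\nu\mu}=0$ unless $\ba_\mu<\ba_\nu$. Taking dimensions in the defining relation $d_\xi^e([L_{\zeta_e}^\nu])=\sum_\mu a_{\nu\mu}[L_\xi^\mu]$ gives
\[ \dim L_{\zeta_e}^\nu \;=\; \dim L_\xi^\nu+\sum_{\ba_\mu<\ba_\nu} a_{\nu\mu}\,\dim L_\xi^\mu,\]
and since $a_{\nu\mu}\in{\Z}_{\geq 0}$, the identity $A_\xi^e=\mathrm{id}$ is equivalent to the numerical assertion $\dim L_\xi^\mu=\dim L_{\zeta_e}^\mu$ for every $\mu\in\Lambda_\xi^\circ$.

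Next I would translate this into a statement about Gram matrices. Since $L_\xi^\mu=W_\xi^\mu/\mathrm{rad}(\phi_\xi^\mu)$, we have $\dim L_\xi^\mu=\rang(G_\xi^\mu)$, where $G_\xi^\mu$ is obtained from the Gram matrix $G^\mu$ of $\phi^\mu$ by applying $\theta$, and analogously for $\zeta_e$. The conjecture is therefore equivalent to the rank identity
\[ \rang(G_\xi^\mu)\;=\;\rang(G_{\zeta_e}^\mu)\qquad\text{for every }\mu\in\Lambda.\]
To make this tractable I would pass to a Smith normal form of $G^\mu$ over a convenient integral domain and count the elementary divisors annihilated by each of $\theta$ and $\theta_e$. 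By Corollary~\ref{detg} the non-unit irreducible factors of $\det(G^\mu)$ are either bad primes or cyclotomic polynomials $\Phi_f$ dividing $P_W$; by Remark~\ref{signi} the hypothesis that $e\ell$ does not divide any degree of $W$ forces $f=e$ whenever $\Phi_f$ also satisfies $\Phi_f(\xi)=0$. Hence the cyclotomic contribution to the rank drop is the same under both specialisations, and the conjecture reduces to showing that the corresponding $\Phi_e$-multiplicities in the Smith form of $G^\mu$ are preserved upon reduction mod~$\ell$.

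To complete the argument for $W$ of exceptional type, I would proceed type by type, using the Howlett--Yin $W$-graph models (available through \cite{jmich}) as explicit realisations of the generic irreducibles $E_v^\lambda$, lifting them to cellular representatives via Theorem~\ref{Hcellbase}, solving the linear system that determines the invariant form $\phi^\mu$, and then computing $\rang(G_\xi^\mu)$ and $\rang(G_{\zeta_e}^\mu)$ at every relevant cyclotomic specialisation. The main obstacle is purely computational rather than conceptual: in type $E_8$ cell modules can have dimension up to $7168$, so the associated Gram matrices carry on the order of $5\times 10^7$ polynomial entries, and straightforward Smith normal form or rank routines are completely infeasible. Refined strategies along the lines of \cite{gemu2}, combining \textsf{MeatAxe}-style modular arithmetic with carefully chosen bases and specialisations, will be needed to bring the calculation into range; even so, merely producing the Gram matrices is advertised to consume months of CPU time.
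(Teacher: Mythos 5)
You are asked to ``prove'' a statement that the paper itself leaves as a conjecture: the paper does not establish (J) in general, it reformulates it (Corollary~\ref{Jconj1}), proves it only for $\ell$ large (Proposition~\ref{largeell}), and then verifies it for the exceptional types by explicit computation (Theorem~\ref{mainthm} together with \cite{gemu2}). Your first reductions are correct and coincide with the paper's: by Lemma~\ref{factor} the adjustment matrix is unitriangular with non-negative entries, so $A_\xi^e=\mathrm{id}$ is equivalent to $\dim L^\mu_\xi=\dim L^\mu_{\zeta_e}$ for all $\mu$, which in turn is equivalent to $\operatorname{rank}(G^\mu_\xi)=\operatorname{rank}(G^\mu_{\zeta_e})$ for all $\mu$; this is exactly Corollary~\ref{Jconj1}. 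Up to that point you follow the paper's route.

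The genuine gap is in the verification step. You propose to take the Howlett--Yin $W$-graph modules $V^\lambda$, ``lift them to cellular representatives via Theorem~\ref{Hcellbase}'', and compute the Gram matrices of $\phi^\mu$. Computing the cellular basis and the forms $\phi^\mu$ directly is precisely what the paper declares unfeasible beyond rank~$6$; what is computable is an invariant form $Q^\lambda$ on the $W$-graph lattice $V^\lambda$ itself. But the generic isomorphism $V^\lambda_K\cong W^\lambda_K$ does not imply $V^\lambda_\xi\cong W^\lambda_\xi$, nor that $\operatorname{rank}(Q^\lambda_\xi)=\operatorname{rank}(G^\lambda_\xi)$: the base-change matrix $P^\lambda$ can degenerate under $\theta$, and even the generic isomorphism $W^\lambda\cong V^\lambda$ over $A$ is open (Conjecture~\ref{Cwgraph}). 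Bridging this is the main theoretical content of \S\ref{subP2}: Proposition~\ref{help1} (a unique-maximal-submodule condition on $V^\lambda_{\xi_0}$ at one $e$-regular specialisation, checked with the MeatAxe, which forces $\det(P^\lambda)$ to be prime to good primes and to $\Phi_e(v^2)$) together with Lemma~\ref{help2} is what allows the ranks of $Q^\lambda$ to stand in for those of $G^\lambda$; your proposal never addresses this, so your rank computations would say nothing about $\operatorname{rad}(\phi^\lambda_\xi)$. Two smaller points: the Smith-normal-form step is shaky, since $A=R[v,v^{-1}]$ is not a principal ideal domain and the determinant-level information from Corollary~\ref{detg} and Remark~\ref{signi} controls only whether the rank drops, not by how much --- your residual claim that the ``$\Phi_e$-multiplicities are preserved mod $\ell$'' is simply a restatement of the conjecture; and for the largest modules even setting up the linear system ($*$) is impossible, which is why the paper resorts to the Benson--Curtis standard-base method of \S\ref{subP3}.
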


\medskip
Using the factorisation in Remark~\ref{compare} and Lemma~\ref{factor}, 
the above conjecture can be reformulated as follows. 

\medskip
\begin{cor}[Alternative version of James' Conjecture] \label{Jconj1} 
Condition {\rm (J)} in Conjecture~\ref{Jconj} holds if and only if $\dim 
\operatorname{rad}(\phi_\xi^\lambda)= \dim \operatorname{rad}
(\phi_{\zeta_e}^\lambda)$ for all $\lambda \in \Lambda$.
\end{cor}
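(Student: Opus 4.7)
The plan is to translate the rank condition into a statement about the dimensions of the simple modules $L_\xi^\mu$ and $L_{\zeta_e}^\mu$, and then exploit the unitriangular structure of the adjustment matrix $A_\xi^e$ established in Lemma~\ref{factor}. First, each cell module $W^\lambda$ is free of rank $d_\lambda$ over $A$, so $\dim W_\xi^\lambda = \dim W_{\zeta_e}^\lambda = d_\lambda$; combining Theorem~\ref{thm13} with Lemma~\ref{factor}(b) gives $\Lambda_\xi^\circ = \Lambda_{\zeta_e}^\circ$. For $\mu$ in this common set we have $\dim L_\xi^\mu = d_\mu - \dim\operatorname{rad}(\phi_\xi^\mu)$, and analogously for $\zeta_e$; for $\lambda \notin \Lambda_\xi^\circ$ both radicals coincide with the whole cell module, so the condition is automatic. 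Hence the hypothesis $\dim\operatorname{rad}(\phi_\xi^\lambda) = \dim\operatorname{rad}(\phi_{\zeta_e}^\lambda)$ for all $\lambda \in \Lambda$ is equivalent to $\dim L_\xi^\mu = \dim L_{\zeta_e}^\mu$ for all $\mu \in \Lambda_\xi^\circ$.

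Next, I reformulate (J) as $A_\xi^e = I$. The factorisation in Remark~\ref{compare} reads, in matrix form, $D_\xi = D_{\zeta_e}\cdot A_\xi^e$. Ordering $\Lambda_{\zeta_e}^\circ$ by increasing $\ba$-value, the relations ($\Delta$) of Section~2 show that the square submatrix of $D_{\zeta_e}$ indexed by rows and columns in $\Lambda_{\zeta_e}^\circ$ is lower unitriangular, hence invertible. Consequently $A_\xi^e$ is uniquely determined by $D_\xi$ and $D_{\zeta_e}$, and (J) --- namely $D_\xi = D_{\zeta_e}$ --- is equivalent to $A_\xi^e = I$. It remains to check that $A_\xi^e = I$ is in turn equivalent to the dimension condition of the previous paragraph. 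Applying $d_\xi^e$ to $[L_{\zeta_e}^\nu]$ and taking dimensions yields, in view of Lemma~\ref{factor}(a)--(b),
\[ \dim L_{\zeta_e}^\nu \;=\; \dim L_\xi^\nu \;+\; \sum_{\substack{\mu \in \Lambda_\xi^\circ \\ \ba_\mu < \ba_\nu}} a_{\nu\mu}\,\dim L_\xi^\mu. \]
If $A_\xi^e = I$, all off-diagonal $a_{\nu\mu}$ vanish, and the dimensions agree. Conversely, if the dimensions agree, the sum above equals zero; since each $a_{\nu\mu}$ is a non-negative integer and $\dim L_\xi^\mu > 0$ for $\mu \in \Lambda_\xi^\circ$, every off-diagonal $a_{\nu\mu}$ must be zero, so $A_\xi^e = I$.

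I do not expect any serious obstacle: once Lemma~\ref{factor}, Remark~\ref{compare} and the relations ($\Delta$) are available, the argument is linear algebra combined with careful index-set bookkeeping. The only point deserving real care is the converse direction of the final step, where the positivity of $\dim L_\xi^\mu$ for $\mu \in \Lambda_\xi^\circ$ is essential in order to conclude that $A_\xi^e$ is the identity and not merely unitriangular.
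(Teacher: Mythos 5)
Your argument is correct and follows essentially the route the paper intends: the paper derives the corollary precisely from the factorisation $D_\xi=D_{\zeta_e}A_\xi^e$ of Remark~\ref{compare}, the unitriangularity coming from the relations ($\Delta$), and Lemma~\ref{factor} (whose part (c) already uses the same dimension identity you exploit), together with $\Lambda_\xi^\circ=\Lambda_{\zeta_e}^\circ$ from Theorem~\ref{thm13}. Your explicit handling of the converse via positivity of $\dim L_\xi^\mu$ is exactly the point the paper leaves implicit.
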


\medskip
Thus, in order to verify James' conjecture, it is sufficient to determine 
the ranks of the Gram matrices of the bilinear forms $\phi^\lambda$ for 
various specialisations. Recall from Section~2 that the entries of these
Gram matrices are certain structure constants of $\cH$ with respect to its
cellular basis, and these can be expressed in terms of the structure
constants of the Kazhdan--Lusztig basis of $\cH$. These in turn can be 
computed in principle (using recursive formulae), but note that this is only
feasible for algebras of small rank. In Section~\ref{sec4} and \cite{gemu2}, 
we will see how this problem can be solved effectively.

\medskip
\begin{prop}[See also \protect{\cite[Prop.~5.5]{mybaum}} and
\protect{\cite[2.7]{mybour}}] \label{largeell} There exists a bound $N$, 
depending only on $W$, such that condition {\rm (J)} in 
Conjecture~\ref{Jconj} holds for all $\ell>N$.
\end{prop}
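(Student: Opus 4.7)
The plan is to apply the reformulation provided by Corollary~\ref{Jconj1}: condition (J) is equivalent to the assertion that for every $\lambda \in \Lambda$, the ranks of the specialised Gram matrices $G^\lambda_\xi$ and $G^\lambda_{\zeta_e}$ coincide. I shall produce a bound $N$, depending only on $W$, such that this equality of ranks is forced for all $\ell > N$ and all $\lambda \in \Lambda$.

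First I would reduce to finitely many values of $e$. If $e$ does not divide any degree $d_i$ of $W$, then $\Phi_e \nmid P_W$, and Remark~\ref{signi} combined with the hypothesis that $e\ell$ does not divide any degree forces $P_W(\xi) \neq 0$; evidently $P_W(\zeta_e) \neq 0$ in this situation as well. Proposition~\ref{prop11} then shows that both $\cH_k(W,\xi)$ and $\cH_\C(W,\zeta_e)$ are semisimple, so (J) holds trivially. Hence it suffices to treat $e$ lying in the finite set $E := \{e \geq 2 \mid e \text{ divides some } d_i\}$, which depends only on $W$.

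Next I would invoke the discrete valuation ring $\cO \subseteq \Q(\zeta_{2e})$ from Remark~\ref{compare}: the specialisation $\theta$ factors through $\cO$ as reduction modulo its maximal ideal $\fq$. By Remark~\ref{integral}, the entries of $G^\lambda$ lie in $\Z[v,v^{-1}]$, so $G^\lambda_{\zeta_e}$ is a matrix over $\cO$ and $G^\lambda_\xi$ is its reduction modulo $\fq$. Setting $r_\lambda^e := \operatorname{rank}_\C G^\lambda_{\zeta_e}$, we obtain $\operatorname{rank} G^\lambda_\xi \leq r_\lambda^e$, with equality if and only if some $r_\lambda^e \times r_\lambda^e$ minor of $G^\lambda_{\zeta_e}$ has nonzero image modulo $\fq$.

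Finally, for each pair $(\lambda,e) \in \Lambda \times E$ I would pick a minor $m_\lambda^e \in \Z[v,v^{-1}]$ of $G^\lambda$ with $m_\lambda^e(\zeta_{2e}) \neq 0$ and consider its field norm $n_\lambda^e := N_{\Q(\zeta_{2e})/\Q}(m_\lambda^e(\zeta_{2e})) \in R$, which is a fixed nonzero rational. Only finitely many primes divide the numerator of $n_\lambda^e$; once $\ell$ (which is assumed good) avoids all of them, the element $m_\lambda^e(\zeta_{2e})$ lies outside $\fq$ and has nonzero reduction, preserving the rank. Taking $N$ to be the maximum of all such exceptional primes over the finitely many pairs $(\lambda,e) \in \Lambda \times E$ yields the desired bound. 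I expect the main subtlety to lie in setting up the DVR correctly and exploiting the integrality of the Gram matrix entries; once this framework is in place, the norm-based finiteness argument is essentially routine.
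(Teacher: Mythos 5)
Your argument is correct and follows essentially the same route as the paper's proof: reduce via Corollary~\ref{Jconj1} to comparing ranks of the Gram matrices, view $G^\lambda_\xi$ as the reduction of $G^\lambda_{\zeta_e}$ through the map of Remark~\ref{compare}, pick a maximal non-vanishing minor at $v=\zeta_{2e}$, and exclude the finitely many primes dividing its norm, then take the maximum over the finite index set. The only difference is that you make explicit the (harmless) reduction to the finitely many values of $e$ dividing a degree of $W$, a uniformity point the paper leaves implicit.
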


\begin{proof} We introduce the following notation. Given any matrix $M$ with 
entries in $A$, we denote by $M_\xi$ the matrix obtained by applying 
$\theta$ to all entries of $M$. Similarly, we define $M_{\zeta_{e}}$ via 
the map $\theta_e$; the entries of $M_{\zeta_e}$ will lie in $R[\zeta_{2e}]$.
Finally, if $N$ is a matrix with entries in $R[\zeta_{2e}]$, we denote by 
$\bar{N}$ the matrix obtained by applying the map $\alpha \mapsto 
\bar{\alpha}$ to all entries of $N$ (see Remark~\ref{compare}). With this 
notation, we have $M_\xi= \overline{M}_{\zeta_{e}}$ for any matrix $M$ 
with entries in $A$.

Now fix $e\geq 2$ and $\lambda \in \Lambda$. Let $G^\lambda$ be the Gram 
matrix of $\phi^\lambda$; this is a matrix with entries in ${\Z}[v,v^{-1}]$.
With the above notation, we have $G^\lambda_\xi=
\overline{G}_{\zeta_e}^\lambda$. This already implies that $\mbox{rank} 
(G_\xi^\lambda) \leq r:=\mbox{rank}(G_{\zeta_{e}}^\lambda)$. We can find 
an $r\times r$-submatrix $G$ of $G^\lambda$ such that $\det(G_{\zeta_e})
\neq 0$. Now $\det(G_{\zeta_e})$ is an algebraic integer in the ring 
${\Z}[\zeta_{2e}]$; its norm will be a non-zero rational integer. If 
$\ell$ does not divide that integer, we have 
\[ \det(G_\xi)=\det(\overline{G}_{\zeta_e})=\overline{\det(G_{\zeta_e})} 
\neq 0.\]
So $r=\mbox{rank}(G_\xi^\lambda)=\mbox{rank}(G_{\zeta_e}^\lambda)$ for 
$\ell$ ``large enough''. Hence, since $\Lambda$ is a finite set, there is 
global bound $N$ such that $\mbox{rank}(G_\xi^\lambda)=\mbox{rank}
(G_{\zeta_e}^\lambda)$ for all $\lambda \in \Lambda$ and all $\ell>N$. 
Hence, by Corollary~\ref{Jconj1}, the conclusion of James' conjecture 
holds for all $\ell>N$, 
\end{proof}

Note that the above proof actually provides a method for finding $N$,
assuming that the Gram matrices $G^\lambda$ are explicitly known.

Recall from Section~2 the definition of the Brauer graph of $\cH$ with 
respect to $\theta \colon A \rightarrow k$; its connected components are 
called {\em $\xi$-blocks}. Similarly, we define the Brauer graph of $\cH$ 
with respect to $\theta_e \colon A \rightarrow \C$. Its connected 
components are called {\em $\zeta_e$-blocks}. 

\medskip
\begin{defn} \label{defect} Given $\lambda \in \Lambda$, we set
\[ \delta_\lambda:=\max\{i \geq 0 \mid \Phi_e^i \mbox{ divides } 
\bc_\lambda \mbox{ in ${\Q}[u]$}\}.\]
This number is called the {\em $\Phi_e$-defect} of $\lambda$ (or of 
$E^\lambda$). 
\end{defn}

\medskip
\begin{prop}[Geck \protect{\cite[7.4 and 7.6]{mybaum}}] \label{block}
Assume that $e\ell$ does not divide any degree of $W$. Then the following 
hold.
\begin{description}
\item[(a)] The $\xi$-blocks of $\cH$ coincide with the $\zeta_e$-blocks 
of $\cH$. 
\item[(b)] If $E^\lambda$ and $E^\mu$ belong to the same $\xi$-block, then 
$\delta_\lambda=\delta_\mu$.
\end{description}
\end{prop}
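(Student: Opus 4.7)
The plan is to prove (a) first, combining the factorisation of decomposition maps from Remark~\ref{compare} with the central-character description of blocks, and then to deduce (b) by showing that the $\Phi_e$-defect is constant on each $\zeta_e$-block. The easy direction of (a), namely that every $\zeta_e$-block is contained in a $\xi$-block, is a formal consequence of the factorisation: if $\lambda,\lambda'$ are linked in the $\zeta_e$-Brauer graph by some $\mu\in\Lambda_{\zeta_e}^\circ$, then by Theorem~\ref{thm13} and Lemma~\ref{factor}(b) we have $\mu\in\Lambda_\xi^\circ$ with $a_{\mu\mu}=1$, and the identity
\[
(W^\lambda_\xi:L^\mu_\xi) = \sum_{\nu\in\Lambda_{\zeta_e}^\circ} a_{\nu\mu}\,(W^\lambda_{\zeta_e}:L^\nu_{\zeta_e})
\]
is a sum of non-negative integers containing the strictly positive term $a_{\mu\mu}(W^\lambda_{\zeta_e}:L^\mu_{\zeta_e})$; the same holds for $\lambda'$, so $\mu$ links $\lambda$ and $\lambda'$ in the $\xi$-Brauer graph as well.

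For the reverse inclusion, I would use the standard block theory for symmetric algebras (cf.\ \cite[\S 7.5]{ourbuch}): the central character $\omega_\lambda:Z(\cH)\to A$ of $E_v^\lambda$ takes values in $A$, and two labels $\lambda,\lambda'$ lie in the same $\psi$-block for a specialisation $\psi:A\to F$ precisely when $\psi\circ\omega_\lambda=\psi\circ\omega_{\lambda'}$ on $Z(\cH)$. Since $\theta$ factors as $\alpha\circ\theta_e$, the goal is to rule out central-character differences that vanish under $\theta$ but not under $\theta_e$. The values $\omega_\lambda(z)$ are rational in $v$ with denominators involving the Schur elements $\bc_\mu$, and by Remark~\ref{equalp} the non-trivial cyclotomic factors appearing in the $\bc_\mu$ lie in $\{\Phi_d : d\mid d_i\}$. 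Under the hypothesis $e\ell\nmid d_i$, Remark~\ref{signi} identifies $\Phi_e$ as the only such factor vanishing at $\xi$. A careful valuation argument in the localisation $A_{(\Phi_{2e})}$, using this constraint on the cyclotomic structure of the $\bc_\mu$ together with the explicit form of central-character values for Iwahori--Hecke algebras, then shows that no extra collapse of central characters can occur beyond what $\theta_e$ has already produced.

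For (b), I would first prove that $\delta_\lambda$ is constant on every $\zeta_e$-block and then combine this with (a). The central primitive idempotents $e_\lambda\in\cH_K$ have denominators involving $\bc_\lambda$, and summing them over a single $\zeta_e$-block must yield an element integral over the local ring $A_{(\Phi_{2e})}$. This integrality, combined with the fact that only powers of $\Phi_e$ are non-units in this localisation, forces the $\Phi_e$-adic valuations $\delta_\lambda$ to coincide throughout a single $\zeta_e$-block; by (a) they therefore coincide throughout any $\xi$-block as well.

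The main obstacle is the reverse inclusion in (a): one has to rule out the possibility that the $\fq$-reduction collapses central characters that were already distinct over $\Q(\zeta_{2e})$. The hypothesis $e\ell\nmid d_i$ enters decisively here through Remark~\ref{signi}; without it, further cyclotomic factors of $P_W$ could vanish at $\xi$ and trigger extra mergings, making the $\xi$-blocks strictly coarser than the $\zeta_e$-blocks.
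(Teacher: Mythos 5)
The paper does not prove this proposition at all: it quotes it from \cite[7.4 and 7.6]{mybaum}, and it explicitly warns (in the paragraph following the statement) that the only known proof of part (b) relies on interpreting $D_{\zeta_e}$ inside the modular representation theory of a finite group of Lie type with Weyl group $W$, together with results on heights of characters in blocks of finite groups with abelian defect groups. Your proposal claims to get (b) by a purely algebraic integrality argument, so the burden of proof on that step is very high, and in fact the step is invalid as stated. The inference ``the sum of the central primitive idempotents $e_\lambda$ over a block is integral over the relevant localisation, hence the $\Phi_e$-valuations $\delta_\lambda$ of the Schur elements coincide across the block'' is false as a general principle for symmetric algebras: the group algebra of a finite group is the test case (Schur elements $|G|/\chi(1)$), block idempotents are always integral there, and yet characters in a $p$-block generally have different $p$-defects whenever the defect group is non-abelian. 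So integrality of the block idempotent alone cannot force equal valuations; some further input (exactly the abelian-defect/height-zero information that the known proof imports from finite groups of Lie type) is indispensable. A related red flag is that your argument for (b) never uses the hypothesis that $e\ell$ divides no degree of $W$ except through (a); it would ``prove'' constancy of $\delta_\lambda$ on $\zeta_e$-blocks by pure integrality, with no hypotheses, which is precisely the kind of statement the abelian-defect condition is there to secure.

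Part (a) also has a gap at the decisive point. The easy inclusion (each $\zeta_e$-block is contained in a $\xi$-block) via the factorisation $d_\xi=d_\xi^e\circ d_{\zeta_e}$ and Lemma~\ref{factor}(b) is fine. But for the reverse inclusion you write that ``a careful valuation argument \dots\ then shows that no extra collapse of central characters can occur'': that sentence is the whole content of the statement, and no argument is actually given. Two points need attention there. First, the criterion ``same block $\Leftrightarrow$ central characters agree after specialisation'' has to be justified in this setting (it is a statement about the algebra over a suitable local/complete base, not a formality over $A$); second, knowing that $\theta=\overline{(\,\cdot\,)}\circ\theta_e$ kills the difference $\omega_\lambda(z)-\omega_\mu(z)$ only tells you that $\theta_e(\omega_\lambda(z)-\omega_\mu(z))$ lies in the maximal ideal $\fq$ lying over $\ell$, and ruling out that a non-zero cyclotomic integer falls into $\fq$ is exactly where the hypothesis $e\ell\nmid d_i$ must do concrete work; pointing at Remark~\ref{signi} and the cyclotomic structure of the $\bc_\mu$ does not yet produce that argument. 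As it stands, the proposal reproduces the formal half of (a) and asserts, rather than proves, the two hard steps; for (b) the proposed mechanism cannot work without importing the height-theoretic input that the paper says is currently unavoidable.
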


\medskip
The above result shows that all irreducible representations in a given 
$\xi$-block of $\cH$ have the same $\Phi_e$-defect, which will be called the 
$\Phi_e$-defect of the block. Note that the only known proof of 
Proposition~\ref{block}(b) relies on an interpretation of $D_{\zeta_e}$ in 
the modular representation theory of a finite group of Lie type with Weyl 
group $W$, and on known results on heights of characters in blocks of finite 
groups with abelian defect groups.

\begin{table}[htbp] \caption{The sets $\Lambda^\circ_{\zeta_e}$ for type 
$F_4$, $E_6$, $E_7$} \label{canf4}
\[ \renewcommand{\arraystretch}{1.1}
\begin{array}{l} 
\begin{array}{ccc}\hline \multicolumn{3}{c}{F_4,e=2}\\\hline 
1_1 & 0 & 1 \\ 2_1 & 1 & 2 \\ 2_3 & 1 & 2 \\ 9_1 & 2 & 5 \\ \hline &&
\end{array}\qquad
\begin{array}{ccc}\hline \multicolumn{3}{c}{F_4,e=2}\\\hline
4_2 & 1 & 4 \\ \hline && \\&&\\&&\\&&\end{array} \qquad
\begin{array}{ccc} \hline \multicolumn{3}{c}{F_4,e=3}\\\hline 
1_1 & 0 & 1 \\ 2_1 & 1 & 1 \\ 2_3 & 1 & 1 \\ 4_1 & 4 & 1 \\ \hline  &&
\end{array} \qquad
\begin{array}{ccc} \hline \multicolumn{3}{c}{F_4,e=3}\\\hline
4_2 & 1 & 4 \\ 8_1 & 3 & 4 \\ 8_3 & 3 & 4 \\ 16_1 & 4 & 4 \\ \hline &&
\end{array}\qquad
\begin{array}{ccc} \hline \multicolumn{3}{c}{F_4,e=4}\\\hline 
1_1 & 0 & 1 \\ 4_2 & 1 & 4 \\ 9_1 & 2 & 4 \\ 6_1 & 4 & 1 \\ 12_1 & 4 & 4 \\ 
\hline \end{array} \qquad
\begin{array}{ccc}\hline  \multicolumn{3}{c}{F_4,e=6}\\\hline 
1_0 & 0 & 1 \\ 2_1 & 1 & 2 \\ 2_3 & 1 & 2 \\ 8_1 & 3 & 5 \\ 8_3 & 3 & 5 \\
\hline \end{array}\\ \\ \\
\begin{array}{ccc}\hline \multicolumn{3}{c}{E_6,e=2}\\\hline 
1_p & 0 & 1\\
6_p & 1 & 6\\
20_p & 2 & 14\\
15_q & 3 & 14\\
30_p & 3 & 10\\
60_p & 5 & 46\\\hline&&\\&&\\&& \\&&\\&&\end{array}\qquad
\begin{array}{ccc} \hline\multicolumn{3}{c}{E_6,e=3}\\\hline 
1_p & 0 & 1\\
6_p & 1 & 5\\
20_p & 2 & 14\\
15_p & 3 & 10\\
15_q & 3 & 1\\
30_p & 3 & 25\\
64_p & 4 & 10\\
60_p & 5 & 5\\
60_s & 7 & 14\\
80_s & 7 & 25\\\hline&&  \end{array}\qquad
\begin{array}{ccc} \hline \multicolumn{3}{c}{E_6,e=4}\\\hline 
1_p & 0 & 1\\
6_p & 1 & 6\\
15_p & 3 & 15\\
15_q & 3 & 8\\
81_p & 6 & 60\\
10_s & 7 & 1\\
80_s & 7 & 6\\
90_s & 7 & 15\\\hline &&\\&&\\&& \end{array}\qquad
\begin{array}{ccc}\hline \multicolumn{3}{c}{E_6,e=6}\\\hline 
1_p & 0 & 1\\
6_p & 1 & 6\\
20_p & 2 & 13\\
15_q & 3 & 14\\
30_p & 3 & 11\\
60_p & 5 & 32\\
24_p & 6 & 11\\
60_s & 7 & 14\\
80_s & 7 & 13\\
60_p' & 11 & 1\\
30_p' & 15 & 6\\ \hline\end{array}\\ \\ \\
\begin{array}{ccc}\hline \multicolumn{3}{c}{E_7,e=2}\\\hline 
1_a & 0 & 1\\
7_a' & 1 & 6\\
27_a & 2 & 14\\
35_b & 3 & 14\\
105_a' & 4 & 78\\
189_b' & 5 & 56\\
315_a' & 7 & 126\\ \hline && \\ \hline
\multicolumn{3}{c}{E_7,e=2}\\\hline
56_a' & 3 & 56\\
120_a & 4 & 64\\
280_b & 7 & 216\\\hline &&\\&&\\&&\\&&\\&&\\&&\\&&\\&&\\&&\\&&
\end{array}\qquad
\begin{array}{ccc}\hline \multicolumn{3}{c}{E_7,e=4}\\\hline 
1_a & 0 & 1\\
56_a' & 3 & 56\\
105_b & 6 & 48\\
210_a & 6 & 154\\
189_a & 8 & 35\\
405_a & 8 & 147\\
70_a & 16 & 21\\
315_a & 16 & 120\\\hline && \\ 
\hline \multicolumn{3}{c}{E_7,e=4}\\\hline
7_a' & 1 & 7\\
15_a' & 4 & 8\\
105_a' & 4 & 105\\
189_c' & 7 & 84\\
280_b & 7 & 168\\
378_a' & 9 & 21\\
210_b' & 13 & 27\\ \hline &&\\&&\\&&\\&&\\&&\end{array}\qquad
\begin{array}{ccc}\hline \multicolumn{3}{c}{E_7,e=4}\\\hline
27_a & 2 & 27\\
21_a & 3 & 21\\
35_b & 3 & 8\\
216_a' & 8 & 168\\
210_b & 10 & 7\\
105_c & 12 & 84\\
378_a & 14 & 105\\\hline && \\
\hline \multicolumn{3}{c}{E_7,e=4}\\\hline
21_b' & 3 & 21\\
120_a & 4 & 120\\
189_b' & 5 & 48\\
35_a' & 7 & 35\\
70_a' & 7 & 1\\
315_a' & 7 & 147\\
336_a & 13 & 154\\
405_a' & 15 & 56\\ \hline &&\\&&\\&&\\&&\\&& 
\end{array}\qquad
\begin{array}{ccc}\hline \multicolumn{3}{c}{E_7,e=3}\\\hline 
1_a & 0 & 1\\
21_a & 3 & 21\\
35_b & 3 & 34\\
120_a & 4 & 98\\
105_b & 6 & 7\\
168_a & 6 & 35\\
210_a & 6 & 91\\
280_b & 7 & 14\\
210_b & 10 & 49\\
420_a & 10 & 196\\\hline && \\ 
\hline \multicolumn{3}{c}{E_7,e=3}\\\hline
7_a' & 1 & 7\\
21_b' & 3 & 14\\
56_a' & 3 & 49\\
15_a' & 4 & 1\\
105_a' & 4 & 35\\
70_a' & 7 & 21\\
280_a' & 7 & 196\\
336_a' & 10 & 91\\
512_a & 11 & 98\\
84_a' & 13 & 34\\ 
\hline \end{array}\qquad
\begin{array}{ccc}\hline \multicolumn{3}{c}{E_7,e=6}\\\hline 
1_a & 0 & 1\\
7_a' & 1 & 7\\
21_b' & 3 & 13\\
21_a & 3 & 21\\
35_b & 3 & 27\\
15_a' & 4 & 14\\
105_a' & 4 & 77\\
105_b & 6 & 43\\
168_a & 6 & 43\\
210_a & 6 & 92\\
70_a' & 7 & 42\\
280_a' & 7 & 90\\
315_a' & 7 & 13\\
84_a & 10 & 14\\
210_b & 10 & 27\\
420_a & 10 & 92\\
210_b' & 13 & 1\\
420_a' & 13 & 77\\
280_a & 16 & 21\\
315_a & 16 & 7\\
\hline &&\\&&\end{array}\end{array}\]
Each table corresponds to a block of defect $\geq 2$. The first column 
specifies the set $\Lambda_{\zeta_e}^\circ$, the second column contains 
$\ba_\mu$ and the third column contains $\dim L_{\zeta_e}^\mu$ for 
$\mu \in \Lambda_{\zeta_e}^\circ$. 
\end{table}

\begin{table}[htbp] \caption{The sets $\Lambda_{\zeta_e}^\circ$ 
for type $E_8$} \label{can8}

\[\renewcommand{\arraystretch}{1.1}\renewcommand{\arraycolsep}{1.7pt}
\begin{array}{ccc}\hline \multicolumn{3}{c}{E_8,e=2}\\\hline 
1_x & 0 & 1\\
8_z & 1 & 8\\
35_x & 2 & 27\\
84_x & 3 & 48\\
50_x & 4 & 42\\
210_x & 4 & 202\\
560_z & 5 & 246\\
700_x & 6 & 126\\
1400_z & 7 & 792\\
1050_x & 8 & 651\\
1400_x & 8 & 378\\
4200_x & 12 & 1863\\\hline  && \\ 
\hline \multicolumn{3}{c}{E_8,e=2}\\\hline
112_z & 3 & 112\\
160_z & 4 & 160\\
400_z & 6 & 288\\
1344_x & 7 & 1184\\
2240_x & 10 & 1056\\
3360_z & 12 & 2016\\\hline &&\\ &&\\
\hline \multicolumn{3}{c}{E_8,e=4}\\\hline 
1_x & 0 & 1\\
35_x & 2 & 34\\
112_z & 3 & 77\\
50_x & 4 & 16\\
210_x & 4 & 176\\
567_x & 6 & 280\\
400_z & 6 & 96\\
175_x & 8 & 1\\
350_x & 8 & 70\\
1050_x & 8 & 336\\
1575_x & 8 & 946\\
525_x & 12 & 168\\
3360_z & 12 & 1654\\
2800_z & 13 & 1302\\
2835_x & 14 & 34\\
6075_x & 14 & 280\\
3150_y & 16 & 77\\
4480_y & 16 & 176\\
5670_y & 16 & 946\\\hline &&\\&&\\&&\\&&
\end{array}\;\;
\begin{array}{ccc}\hline \multicolumn{3}{c}{E_8,e=4}\\\hline 
8_z & 1 & 8\\
560_z & 5 & 560\\
1344_x & 7 & 784\\
840_z & 10 & 56\\
1400_zz & 10 & 832\\
4536_z & 13 & 2360\\
4200_z' & 21 & 1008\\
2240_x' & 28 & 1400\\\hline  && \\
\hline \multicolumn{3}{c}{E_8,e=4}\\\hline
28_x & 3 & 28\\
160_z & 4 & 160\\
300_x & 6 & 300\\
972_x & 10 & 512\\
840_x & 12 & 28\\
700_xx & 13 & 512\\
1344_w & 16 & 160\\
840_x' & 24 & 300\\\hline && \\
\hline \multicolumn{3}{c}{E_8,e=4}\\\hline
56_z & 7 & 56\\
1008_z & 7 & 1008\\
1400_z & 7 & 1400\\
3240_z & 9 & 832\\
2240_x & 10 & 8\\
4200_z & 15 & 2360\\
3200_x' & 21 & 784\\
4536_z' & 23 & 560\\\hline && \\
\hline \multicolumn{3}{c}{E_8,e=4}\\\hline
84_x & 3 & 84\\
700_x & 6 & 616\\
2268_x & 10 & 1652\\
4200_x & 12 & 1848\\
2100_x & 13 & 448\\
2016_w & 16 & 84\\
5600_w & 16 & 1652\\
4200_x' & 24 & 616\\
\hline &&\\&&\\&&\\&&\\&&\\&&\\&&\\&&\end{array}\;\;
\begin{array}{ccc}\hline \multicolumn{3}{c}{E_8,e=3}\\\hline 
1_x & 0 & 1\\
35_x & 2 & 35\\
28_x & 3 & 28\\
84_x & 3 & 48\\
50_x & 4 & 1\\
210_x & 4 & 147\\
300_x & 6 & 70\\
700_x & 6 & 518\\
1344_x & 7 & 497\\
175_x & 8 & 28\\
350_x & 8 & 322\\
1050_x & 8 & 35\\
1400_x & 8 & 1225\\
2240_x & 10 & 322\\
4096_z & 11 & 1036\\
4200_x & 12 & 147\\
700_xx & 13 & 48\\
3200_x & 15 & 497\\
4200_y & 16 & 518\\
4480_y & 16 & 1225\\\hline && \\ 
\hline \multicolumn{3}{c}{E_8,e=3}\\\hline
8_z & 1 & 8\\
112_z & 3 & 104\\
160_z & 4 & 56\\
560_z & 5 & 384\\
400_z & 6 & 8\\
448_z & 7 & 56\\
1400_z & 7 & 848\\
840_z & 10 & 448\\
1400_zz & 10 & 104\\
4096_x & 11 & 1896\\
4200_z & 15 & 384\\
5600_z & 15 & 1896\\
7168_w & 16 & 848\\ \hline 
&&\\&&\\&&\\&&\\&&\\&&\\&&\\&&\\&&\\&&\\&&\end{array}\;\;
\begin{array}{ccc}\hline \multicolumn{3}{c}{E_8,e=6}\\\hline 
1_x & 0 & 1\\
8_z & 1 & 8\\
35_x & 2 & 35\\
28_x & 3 & 28\\
84_x & 3 & 40\\
50_x & 4 & 41\\
210_x & 4 & 210\\
560_z & 5 & 279\\
300_x & 6 & 225\\
700_x & 6 & 86\\
56_z & 7 & 56\\
448_z & 7 & 85\\
1400_z & 7 & 489\\
175_x & 8 & 85\\
350_x & 8 & 266\\
1050_x & 8 & 660\\
1400_x & 8 & 259\\
840_z & 10 & 259\\
1400_zz & 10 & 40\\
840_x & 12 & 41\\
4200_x & 12 & 1906\\
2100_x & 13 & 1036\\
2400_z & 15 & 266\\
4200_z & 15 & 279\\
5600_z & 15 & 489\\
420_y & 16 & 1\\
1680_y & 16 & 56\\
4200_y & 16 & 660\\
4480_y & 16 & 8\\
4536_y & 16 & 225\\
5670_y & 16 & 28\\
4200_x' & 24 & 35\\
1400_x' & 32 & 210\\ \hline && \\
\hline \multicolumn{3}{c}{E_8,e=6}\\\hline
112_z & 3 & 112\\
160_z & 4 & 160\\
400_z & 6 & 288\\
1344_x & 7 & 1072\\
2240_x & 10 & 768\\
3360_z & 12 & 2128\\
3200_x & 15 & 2128\\
1344_w & 16 & 288\\
7168_w & 16 & 1072\\
3360_z' & 24 & 160\\
2240_x' & 28 & 112\\
\hline\end{array}\;\;
\begin{array}{ccc}\hline \multicolumn{3}{c}{E_8,e=5}\\\hline 
1_x & 0 & 1\\
28_x & 3 & 28\\
84_x & 3 & 83\\
567_x & 6 & 539\\
1344_x & 7 & 722\\
972_x & 10 & 166\\
2268_x & 10 & 1729\\
4096_z & 11 & 1078\\
168_y & 16 & 1\\
1134_y & 16 & 28\\
2688_y & 16 & 722\\
4536_y & 16 & 1729\\
4096_z' & 26 & 539\\
972_x' & 30 & 83\\ \hline && \\ && \\\hline 
\multicolumn{3}{c}{E_8,e=10}\\\hline 
1_x & 0 & 1\\
8_z & 1 & 8\\
28_x & 3 & 28\\
84_x & 3 & 75\\
567_x & 6 & 531\\
448_z & 7 & 372\\
1008_z & 7 & 449\\
1400_z & 7 & 786\\
972_x & 10 & 897\\
2268_x & 10 & 502\\
4536_z & 13 & 2406\\
1400_y & 16 & 449\\
3150_y & 16 & 372\\
4200_y & 16 & 897\\
4480_y & 16 & 786\\
4536_z' & 23 & 75\\
2268_x' & 30 & 531\\
448_z' & 37 & 1\\
1008_z' & 37 & 28\\
1400_z' & 37 & 8\\
\hline &&\\&&\\&&\\&&\\&&\\&&\\&&\\&&\\&&\end{array}\;\;
\begin{array}{ccc}\hline \multicolumn{3}{c}{E_8,e=8}\\\hline 
1_x & 0 & 1\\
35_x & 2 & 34\\
160_z & 4 & 160\\
567_x & 6 & 373\\
175_x & 8 & 174\\
1400_x & 8 & 992\\
1575_x & 8 & 1042\\
525_x & 12 & 152\\
2835_x & 14 & 1668\\
6075_x & 14 & 3516\\
2016_w & 16 & 174\\
5600_w & 16 & 1042\\
7168_w & 16 & 992\\
2835_x' & 22 & 1\\
6075_x' & 22 & 373\\
1400_x' & 32 & 34\\
1575_x' & 32 & 160\\\hline && \\ &&\\
\hline \multicolumn{3}{c}{E_8,e=12}\\\hline 
1_x & 0 & 1\\
35_x & 2 & 35\\
112_z & 3 & 76\\
50_x & 4 & 50\\
210_x & 4 & 99\\
400_z & 6 & 349\\
1050_x & 8 & 651\\
1400_x & 8 & 974\\
525_x & 12 & 449\\
3360_z & 12 & 1386\\
2800_z & 13 & 1202\\
1400_y & 16 & 99\\
2688_y & 16 & 651\\
4536_y & 16 & 974\\
2100_y & 20 & 449\\
3360_z' & 24 & 349\\
2800_z' & 25 & 76\\
1050_x' & 32 & 50\\
1400_x' & 32 & 1\\
210_x' & 52 & 35\\
\hline &&\\&&\\&&\\&&\\&&\\&&\end{array}\]
\end{table}

We can now state the main result of this article and its sequel 
\cite{gemu2}.

\medskip
\begin{thm} \label{mainthm} Recall our standing assumption that $e<\infty$ 
and $\operatorname{char} (k)=\ell>0$ where $\ell$ is a good prime for $W$.
Assume now that $W$ is of exceptional type and that $e\ell$ does not divide 
any degree of $W$. Then James's conjecture holds for $\cH$. More precisely, 
let $\Lambda_1$ be a $\xi$-block of $\Lambda$. By Proposition~\ref{block}, 
$\Lambda_1$ has a well-defined $\Phi_e$-defect, $\delta$ say. 
\begin{description}
\item[(a)] If $\delta=0$, then $\Lambda_1=\{\lambda\}$ is a singleton set;
we have $W_\xi^\lambda=L_\xi^\lambda$ and $W_{\zeta_e}^\lambda=
L_{\zeta_e}^\lambda$.
\item[(b)] If $\delta=1$, then the following hold:
\begin{description}
\item[(i)] We have $\ba_{\lambda} \neq \ba_{\lambda'}$ for any $\lambda \neq
\lambda'$ in $\Lambda_1$. Thus, we have a unique labelling $\Lambda_1=
\{\lambda_1, \lambda_2, \ldots, \lambda_n\}$ such that $\ba_{\lambda_1}<
\ba_{\lambda_2} < \cdots < \ba_{\lambda_n}$.
\item[(ii)] With the labelling in (i), we have $\Lambda_{1,\xi}^\circ=
\{\lambda_1,\ldots, \lambda_{n-1}\}$ and
\[ (W^{\lambda_i}_{\xi}:L^{\lambda_j}_{\xi})=(W^{\lambda_i}_{\zeta_e}: 
L^{\lambda_j}_{\zeta_e})=\left\{ \begin{array}{cl} 1 & \qquad \mbox{if 
$i=j$ or $i=j+1$},\\ 0 & \qquad\mbox{otherwise}.\end{array}\right.\]
\end{description}
\item[(c)] If $\delta \geq 2$, then $\Lambda_{1,\xi}^\circ$ and $\dim 
L^\mu_{\xi}$ for $\mu \in \Lambda_{1,\xi}^\circ$ are given by 
Tables~\ref{canf4} and~\ref{can8}.
\end{description}
\end{thm}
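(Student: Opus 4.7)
The plan is to apply Corollary~\ref{Jconj1} and thereby reduce~(J) to the rank equality $\operatorname{rank}(G_\xi^\lambda)=\operatorname{rank}(G_{\zeta_e}^\lambda)$ for every $\lambda\in\Lambda$, where $G^\lambda\in\operatorname{Mat}(\Z[v,v^{-1}])$ is the Gram matrix of the cell form $\phi^\lambda$; in parallel one must identify $\Lambda_{1,\xi}^\circ$ and the dimensions $\dim L^\mu_\xi$. I would then dispose of the three cases separately, on the block $\Lambda_1$.

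For part~(a), $\delta=0$ means $\Phi_e\nmid\bc_\lambda$ for every $\lambda\in\Lambda_1$. Combined with $\ell\nmid f_\lambda$ (goodness of $\ell$) and Remark~\ref{signi} (which, under the standing assumption on $e\ell$, forces the only cyclotomic factor of $\tilde{\bc}_\lambda$ vanishing at $\xi$ to be $\Phi_e$), this gives $\theta(\bc_\lambda)\neq 0$ and $\theta_e(\bc_\lambda)\neq 0$. A block-wise application of the symmetric-algebra semisimplicity criterion \cite[7.4.7]{ourbuch}, as in Proposition~\ref{prop11}, then renders the corresponding block of each specialised algebra semisimple; \cite[3.8]{GrLe} identifies $W^\lambda_\xi=L^\lambda_\xi$ and $W^\lambda_{\zeta_e}=L^\lambda_{\zeta_e}$, and basic block theory forces $|\Lambda_1|=1$. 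Part~(b) is already settled in \cite{mybaum}, where James' conjecture is proved in the cyclic (defect~$1$) regime: the Brauer-tree analysis there, combined with the strict linear ordering of $\Lambda_1$ by $\ba$-invariant coming from Kazhdan--Lusztig theory \cite{Lusztig03} and the triangularity relations~$(\Delta)$, yields exactly the claimed ``staircase'' decomposition numbers.

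For part~(c) the plan is genuinely computational. First, identify each cell module $W^\lambda$ with the corresponding Howlett--Yin $W$-graph representation \cite{How,yin0}; the theoretical justification of this identification is the content of Section~4. Second, from the explicit $W$-graph matrices produce $G^\lambda$ as a matrix over $\Z[v,v^{-1}]$. Third, specialise via $\theta_e$ and apply \textsf{MeatAxe}-type algorithms respecting the cellular filtration to extract the composition factors of $W^\lambda_{\zeta_e}$; this yields the data $(\Lambda_{1,\zeta_e}^\circ,\dim L^\mu_{\zeta_e})$ tabulated in Tables~\ref{canf4} and~\ref{can8}. Fourth, specialise via $\theta$ and verify that the rank of $G^\lambda$ is unchanged; by Corollary~\ref{Jconj1} this establishes~(J), and Lemma~\ref{factor}(b) then transfers the tabulated data from $\zeta_e$ to~$\xi$, giving the claim in~(c).

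The main obstacle is sheer scale: in type $E_8$ the cell representations reach dimension $7168$, and the direct linear-algebra approach to assembling and reducing $G^\lambda$ already becomes infeasible around dimension $2500$. Circumventing this requires the refined condensation and \textsf{MeatAxe} variants developed in the companion paper~\cite{gemu2}; even with those tools at hand, the decisive runs for the largest $E_8$ representations consume several months of CPU time. Once the Gram matrices have been computed, however, checking invariance and extracting specialised ranks is comparatively routine.
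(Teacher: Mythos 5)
Your overall strategy coincides with the paper's: reduce (J) to rank equalities via Corollary~\ref{Jconj1}; settle $\delta=0$ by a defect-zero/semisimplicity argument (the paper invokes the block-of-defect-zero result \cite[7.5.11]{ourbuch}); quote earlier theoretical work for $\delta=1$ (the paper uses \cite[\S 10]{mybaum} together with \cite[\S 4]{mykl} and \cite[4.4]{GeRo2}, and notes that the transfer from $D_{\zeta_e}$ to $D_\xi$ is only automatic when $\ell$ does not divide $|W|$ --- the remaining cases are absorbed into the computations, so your flat appeal to \cite{mybaum} slightly overstates what is available); and verify $\delta\geq 2$ computationally with the Howlett--Yin/Naruse $W$-graph modules, the {\sf MeatAxe}, and the refinements of \cite{gemu2}.

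In part (c), however, there is a genuine gap. You propose to ``identify each cell module $W^\lambda$ with the corresponding $W$-graph representation'' and then to ``produce $G^\lambda$'' from the $W$-graph matrices. That identification over $A$ is precisely what the paper cannot prove: it is the open Conjecture~\ref{Cwgraph}. What is actually established is weaker and requires real work: Proposition~\ref{help1} gives $V^\lambda_{\zeta_e}\cong W^\lambda_{\zeta_e}$ and $V^\lambda_\xi\cong W^\lambda_\xi$ only at $e$-regular specialisations, and only under the hypothesis that for \emph{one} $e$-regular specialisation $\theta_0$ the module $V^\lambda_{\xi_0}$ has a unique maximal submodule with head $L^\lambda_{\xi_0}$ --- a condition that itself must be checked by computing submodule lattices ({\tt MKSUB}/{\tt CHOP} plus Lemma~\ref{identify}); its proof rests on showing that $\det(P^\lambda)$, for the normalised base-change matrix $P^\lambda$, is divisible only by bad primes and by cyclotomic factors of $P_W(v^2)$ coprime to $\Phi_e(v^2)$, using Remark~\ref{signi} a second time. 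Moreover, what one computes from the $W$-graph is not $G^\lambda$ but an invariant form $Q^\lambda$ on $V^\lambda$ (normalised so that its entries in ${\Z}[v]$ have gcd $1$, and in large rank found via the Benson--Curtis/standard-basis device of Theorem~\ref{becu}); Lemma~\ref{help2} is then needed to conclude $\operatorname{rank}(Q^\lambda_{\zeta_e})=\operatorname{rank}(G^\lambda_{\zeta_e})$ and $\operatorname{rank}(Q^\lambda_{\xi})=\operatorname{rank}(G^\lambda_{\xi})$, again only at $e$-regular specialisations, since one must also rule out vanishing of the scalar $\alpha$ relating $(P^\lambda)^{\mathrm{tr}}G^\lambda P^\lambda$ to $Q^\lambda$. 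Your write-up treats the identification as already justified ``in Section~4'', which it is not; without Proposition~\ref{help1} and Lemma~\ref{help2} (and the attendant computational checks), rank computations on the $W$-graph modules say nothing about the cell forms $\phi^\lambda$, and Corollary~\ref{Jconj1} cannot be invoked.
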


\medskip
\begin{rem} \label{2nd} The $\zeta_e$-blocks (together with their defect) 
of Iwahori--Hecke algebras of exceptional type are explicitly described 
in \cite[App.~F]{ourbuch}. We have verified all the statements of 
Theorem~\ref{mainthm}  using an actual implementation of the algorithms 
presented in Section~4, and their refinements in \cite{gemu2}. Some of 
these statements are known to hold by theoretical arguments. More precisely:
\begin{itemize}
\item[$\bullet$] The statement in (a)  follows from a general result about 
blocks of defect $0$ in symmetric algebras; see \cite[7.5.11]{ourbuch}.
\item[$\bullet$] The statement about $D_{\zeta_e,1}$ in (b) is proved, 
using general arguments, by a combination of \cite[\S 10]{mybaum}, 
\cite[\S 4]{mykl}, \cite[4.4]{GeRo2}. In \cite[\S 10]{mybaum} it is also
shown that these statements apply to $D_\xi$, if $\ell$ does not divide the
order of $W$.
\end{itemize}
Note also that, once James' Conjecture is established (in the form of
Corollary~\ref{Jconj1}), the complete decomposition matrices can be easily
determined: it is sufficient to compute them for {\em one} specialisation
$\theta \colon A \rightarrow k$ where $\mbox{char}(k)=\ell$ is a good prime
and $e\ell$ does not divide any degree of $W$. For the types $F_4$, $E_6$, 
$E_7$, these matrices were known before and can be found in \cite{GeLu}, 
\cite{mye6}, \cite{myhab}, \cite{muell}; for type $E_8$, see \cite{gemu2}.
\end{rem}

\section{Constructing the invariant bilinear form} \label{sec4}

We have seen in Proposition~\ref{largeell} that James' conjecture can be
verified once we have constructed the Gram matrices of the invariant 
bilinear forms on the cell modules $W^\lambda$. If $\cH$ is not too large,
we could actually do this by explicitly working out a cellular basis as in 
\cite[Exp.~4.3]{mycell} (type $B_2$) or Example~\ref{expG2} (type $G_2$). 
Using computers, it would also be possible to carry out similar computations 
in type $F_4$ and, perhaps, type $E_6$. However, this becomes totally 
unfeasible for type $E_7$ or $E_8$, where we do have to explore alternative
routes. The purpose of this section is to show how this can be done. 
Eventually, we will have to rely on computer calculations, but our aim is 
to develop a conceptual reduction of our problem where, at the end, 
standard programs like Parker's {\sf MeatAxe} \cite{Parker} and its 
variations can be applied. (See also Ringe's package \cite{Ri} which comes 
with extensive documentation and a variety of additions to Parker's 
original programs.) 

We keep the general setting of the previous section. Recall that $\cH$ is
defined over the ring $A=R[v,v^{-1}]$ where $R\subseteq \Q$ consists of
all fractions $a/b\in \Q$ such that $a \in \Z$ and $0 \neq b\in \Z$ is 
divisible by bad primes only. Let $K$ be the field of fractions of $A$. If 
$M$ is any $A$-module, we denote $M_K:=K \otimes_A M$.

Let $e \geq 2$ and $\theta \colon A \rightarrow k$ a ring homomorphism into 
a field $k$; let $\xi=\theta(u)\in k$. As before, if $M$ is any $A$-module, 
we denote $M_\xi:=k \otimes_A M$ where $k$ is regarded as an $A$-module via 
$\theta$. We say that $\theta$ is {\em $e$-regular} if $\mbox{char}(k)=
\ell>0$ is a good prime and $e\ell$ does not divide any degree of $W$. 
(These are precisely the conditions appearing in James' conjecture.) We 
will address the following three major issues which are sufficient for 
verifying that James' conjecture holds for a given algebra $\cH$:

\medskip
\begin{prob} \label{majprob} Let $e \geq 2$ be an integer which divides 
some degree of $W$. 
\begin{description}
\item[(a)] For any $\lambda \in \Lambda$, construct an explicit model for 
$W^\lambda$, that is, an $\cH$-module $V^\lambda$ which is free of finite 
rank over $A$ such that $V^\lambda_K\cong W^\lambda_K$. Determine 
$\Lambda_{\xi}^\circ$ and the decomposition matrix $D_{\xi}$ for at least
one $e$-regular specialisation $\theta \colon A \rightarrow k$.
\item[(b)] Show that, for each $\lambda \in \Lambda_{\zeta_e}^\circ$, the 
model $V^\lambda$ in (a) has the property that $V^\lambda_\xi\cong
W^\lambda_\xi$ for any $e$-regular specialisation $\theta\colon 
A\rightarrow k$.
\item[(c)] For any $\lambda \in \Lambda_{\zeta_e}^\circ$, determine the 
Gram matrix $Q^\lambda$  of an invariant bilinear form on $V^\lambda$ and 
show that $\text{rank}(Q^\lambda_\xi)=\mbox{rank}(G^\lambda_\xi)$ for 
any $e$-regular specialisation $\theta\colon A \rightarrow k$. 
\end{description}
Finally, compute $\text{rank}(Q^\lambda_{\zeta_e})$ and find the {\em finite}
set of prime numbers $\mathcal{P}_e$ such that 
\[ \text{rank}(Q^\lambda_\xi)=\text{rank}(Q_{\zeta_e}^\lambda)
\qquad \mbox{if $\ell \not\in \mathcal{P}_e$}.\]
\end{prob}

\medskip
\subsection{Solving Problem~\ref{majprob}(a)} \label{subP1}

Natural candidates for models for the cell representations of $\cH$ are the 
representations afforded by $W$-graphs. In fact, Gyoja \cite{Gy1} has 
shown that every irreducible representation of $\cH_K$ is afforded by a 
$W$-graph.  We recall:

\medskip
\begin{defn}[Kazhdan--Lusztig \protect{\cite{KaLu}}] \label{wgraph} A
$W$-graph for $\cH$ consists of the following data:
\begin{description}
\item[(a)] a set $X$ together with a map $I$ which assigns to
each $x \in X$ a set $I(x) \subseteq S$;
\item[(b)] a collection of elements $\mu_{x,y}\in \Z$, where
$x,y \in X$, $x\neq y$.
\end{description}
These data are  subject to the following requirements. Let $V$ be a free 
$A$-module with a basis $\{e_y \mid y \in X\}$. For each $s \in S$, define 
an $A$-linear map $\sigma_s \colon V \rightarrow V$ by
\begin{alignat*}{2}
\sigma_s(e_y)&=v^{2}e_y+\sum_{\atop{x \in X}{s\in I(x)}} v\mu_{x,y} e_{x} 
&&\quad\mbox{if $s\not\in I(y)$},\\
\sigma_s(e_y)&=-e_y &&\quad \mbox{if $s \in I(y)$}.\end{alignat*}
Then we require that the assignment $T_s \mapsto \sigma_s$ defines a
representation of $\cH$.
\end{defn}

\medskip
Thus, in a representation afforded by a $W$-graph, each generator $T_s$ ($s 
\in S$) of $\cH$ is represented by a matrix of a particularly simple form. 
Recently, Howlett and Yin \cite{How}, \cite{yin0} explicitly constructed 
$W$-graphs for all irreducible representations for Iwahori--Hecke algebras 
of type $E_7$, $E_8$. In combination with earlier results of Naruse 
\cite{Naruse0} on types $F_4$ and $E_6$, we now have $W$-graphs for all 
irreducible representations of algebras of exceptional type. These 
$W$-graphs are electronically accessible through Michel's development 
version \cite{jmich} of the computer algebra system {\sf CHEVIE} \cite{chv}. 
Thus, we do have a collection of explicitly given $\cH$-modules
\[ \{V^\lambda \mid \lambda \in \Lambda\}\]
such that each $V^\lambda$ is free of finite rank over $A$ and $V^\lambda_K
\cong E_v^\lambda \cong W_K^\lambda$. 

Now let $\theta \colon A \rightarrow k$ be an $e$-regular  specialisation.
Using the {\tt CHOP} function in Ringe's version \cite{Ri} of the 
{\sf MeatAxe}, we can decompose each $V_\xi^\lambda$ into its irreducible 
constituents. Thus, we obtain:

\smallskip
\begin{itemize}
\item[$\bullet$] $\Irr(\cH_k(W,\xi))=\{M_1,\ldots,M_r\}$ and 
\item[$\bullet$] the decomposition numbers $(V_{\xi}^\lambda:M_i)$ for
$\lambda \in \Lambda$ and $1\leq i\leq r$.
\end{itemize} 
\smallskip
Note that, by Remark~\ref{brauer}, we have $(W^\lambda_\xi:M_i)=
(V_\xi^\lambda:M_i)$ for all $\lambda \in \Lambda$ and $1\leq i \leq r$.
The relations ($\Delta$) in Section~\ref{sec2} immediately imply the
following ``identification result'':

\medskip
\begin{lem} \label{identify} Let $i \in \{1,\ldots,r\}$. Then the unique 
$\mu \in \Lambda_{\xi}^\circ$ such that $M_i=L_\xi^\mu$ is determined 
by the conditions that $(W_{\xi}^\mu:M_i)=1$ and 
\[ \ba_\mu\leq \ba_\lambda \quad \mbox{for all $\lambda \in \Lambda$ such 
that $(W_\xi^\lambda:M_i)\neq 0$}.\]
\end{lem}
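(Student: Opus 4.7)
The plan is to deduce the lemma directly from the cellular-algebra relations $(\Delta)$ of Section~\ref{sec2}, which say that $(W^\mu_\xi : L^\mu_\xi) = 1$ for every $\mu \in \Lambda^\circ_\xi$ and that $(W^\lambda_\xi : L^\mu_\xi) = 0$ unless $\lambda = \mu$ or $\ba_\mu < \ba_\lambda$. Since $\Irr(\cH_k(W,\xi)) = \{L^\mu_\xi \mid \mu \in \Lambda^\circ_\xi\}$, there is a unique $\mu^\ast \in \Lambda^\circ_\xi$ with $M_i \cong L^{\mu^\ast}_\xi$, and the task is to characterise this $\mu^\ast$ by the two stated conditions.

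First I would verify that $\mu^\ast$ itself satisfies the two conditions. The equality $(W^{\mu^\ast}_\xi : M_i) = (W^{\mu^\ast}_\xi : L^{\mu^\ast}_\xi) = 1$ is immediate from $(\Delta)$. For the second, suppose $\lambda \in \Lambda$ with $(W^\lambda_\xi : M_i) \neq 0$; rewriting this as $(W^\lambda_\xi : L^{\mu^\ast}_\xi) \neq 0$ and applying $(\Delta)$ gives $\lambda = \mu^\ast$ or $\ba_{\mu^\ast} < \ba_\lambda$, so in either case $\ba_{\mu^\ast} \leq \ba_\lambda$, as required.

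For uniqueness, I would take any $\mu \in \Lambda^\circ_\xi$ satisfying both conditions and show $\mu = \mu^\ast$. The condition $(W^\mu_\xi : M_i) = 1 \neq 0$, together with $M_i \cong L^{\mu^\ast}_\xi$, gives by $(\Delta)$ that either $\mu = \mu^\ast$ or $\ba_{\mu^\ast} < \ba_\mu$. To rule out the second alternative, observe that applying condition~(2) with $\lambda = \mu^\ast$ is legitimate, since $(W^{\mu^\ast}_\xi : M_i) = (W^{\mu^\ast}_\xi : L^{\mu^\ast}_\xi) = 1 \neq 0$; this yields $\ba_\mu \leq \ba_{\mu^\ast}$, incompatible with $\ba_{\mu^\ast} < \ba_\mu$. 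Hence $\mu = \mu^\ast$, completing the proof.

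There is no real obstacle here: the statement is a formal consequence of $(\Delta)$ and the parametrisation $\Irr(\cH_k(W,\xi)) = \{L^\mu_\xi\}$. Its value lies not in depth but in function, namely that it permits one, after chopping each $V^\lambda_\xi$ into irreducible constituents with the \textsf{MeatAxe}, to match the constituents against the cellular labels $\mu \in \Lambda^\circ_\xi$ purely from the table of multiplicities $(V^\lambda_\xi : M_i) = (W^\lambda_\xi : M_i)$ and the known $\ba$-invariants.
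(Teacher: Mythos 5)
Your argument is correct and is exactly the reasoning the paper has in mind: the authors state that the relations $(\Delta)$ "immediately imply" the lemma, and your write-up simply spells out that implication (existence via $(W^{\mu^\ast}_\xi:L^{\mu^\ast}_\xi)=1$ and the triangularity of $(\Delta)$, uniqueness by playing the two conditions off against each other). No gaps; this matches the paper's approach.
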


By Theorem~\ref{thm13} and Lemma~\ref{factor}, we have $\Lambda_\xi^\circ=
\Lambda_{\zeta_e}^\circ$. Thus, we are able to determine the sets
$\Lambda^\circ_{\zeta_e}$ for any $e \geq 2$. This already yields the
information contained in the first columns in Table~\ref{canf4} 
and~\ref{can8}.

\medskip
\subsection{Solving Problem~\ref{majprob}(b)} \label{subP2}

Let us fix $e \geq 2$ and an element $\lambda \in \Lambda_{\zeta_e}^\circ$.
As discussed above, we have an $\cH$-module $V^\lambda$ such that 
$W^\lambda_K \cong V^\lambda_K$. Now let $\theta\colon A \rightarrow k$ be 
any $e$-regular specialisation. In general, without any further knowledge
about $V^\lambda$, we cannot expect that we also have $W^\lambda_\xi 
\cong V^\lambda_\xi$. The following result gives  a precise condition for 
when this is the case.

\medskip
\begin{prop} \label{help1} Assume that there exists some $e$-regular
specialisation $\theta_0 \colon A \rightarrow k_0$ such that 
$V^\lambda_{\xi_0}$ (where $\xi_0=\theta_0(u)$) has a unique maximal 
submodule $U^\lambda$, and we have $V^\lambda_{\xi_0}/U^\lambda \cong 
L^\lambda_{\xi_0}$. Then $V^\lambda_{\zeta_e} \cong W^\lambda_{\zeta_e}$
and $V^\lambda_{\xi} \cong W^\lambda_{\xi}$ for any $e$-regular 
specialisation $\theta\colon A \rightarrow k$.
\end{prop}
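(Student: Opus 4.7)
My plan is first to prove $V^\lambda_{\xi_0}\cong W^\lambda_{\xi_0}$ by a Hom-space argument using the simple-head hypothesis, and then to propagate this isomorphism to $V^\lambda_{\zeta_e}$ and to every $V^\lambda_\xi$ by redoing the construction inside the discrete valuation rings of Remark~\ref{compare}, which share the common generic fibre $\Q(\zeta_{2e})$ and thereby couple the $\zeta_e$- and $\xi$-specialisations.

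Since $V^\lambda_K\cong W^\lambda_K\cong E^\lambda_v$ is irreducible, Schur's lemma makes $H:=\operatorname{Hom}_{\cH}(V^\lambda,W^\lambda)$ a torsion-free $A$-module of generic rank one, and the decomposition map forces $[V^\lambda_\xi]=[W^\lambda_\xi]$ in $R_0(\cH_k(W,\xi))$ for every specialisation; in particular $(V^\lambda_\xi:L^\lambda_\xi)=1$ by~$(\Delta)$. Take the DVR $\cO\subseteq K$ of Remark~\ref{brauer} with $\fp\cap A=\ker(\theta_0)$ and residue field $k_\fp\supseteq k_0$. Then $H_\cO$ is free of rank one over $\cO$, and a primitive generator $\phi$ has a nonzero reduction $\bar\phi\colon V^\lambda_{\xi_0}\to W^\lambda_{\xi_0}$ (the reduction over $k_\fp$ descends to $k_0$ because $L^\lambda_{\xi_0}$ is absolutely simple). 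Because $\ker\bar\phi$ is a proper submodule, the hypothesis forces $\ker\bar\phi\subseteq U^\lambda$, so $\operatorname{im}\bar\phi$ has $L^\lambda_{\xi_0}$ as a simple quotient. On the other hand, by \cite{GrLe} the cell module $W^\lambda_{\xi_0}$ has a unique maximal submodule $\operatorname{rad}(\phi^\lambda_{\xi_0})$ with quotient $L^\lambda_{\xi_0}$, and by~$(\Delta)$ this radical contains no $L^\lambda_{\xi_0}$-composition factor. Hence $\operatorname{im}\bar\phi$ cannot sit inside the radical; by a dimension count, $\bar\phi$ is therefore an isomorphism. Nakayama's lemma then lifts $\phi$ to an $\cO$-isomorphism $V^\lambda_\cO\cong W^\lambda_\cO$.

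To transfer the isomorphism to $\zeta_e$ and to any other $e$-regular $\xi$, I would redo the primitive-generator construction using the DVRs $\cO'$ of Remark~\ref{compare}: each has fraction field $\Q(\zeta_{2e})\subseteq\C$ and residue field inside $k$, so an $\cO'$-isomorphism simultaneously delivers $V^\lambda_{\zeta_e}\cong W^\lambda_{\zeta_e}$ on the generic fibre and $V^\lambda_\xi\cong W^\lambda_\xi$ on the special fibre. Running the argument of the previous paragraph with $\cO'_{\theta_0}$ already produces $V^\lambda_{\zeta_e}\cong W^\lambda_{\zeta_e}$, and this identifies the one-dimensional subspace $H\otimes_A\Q(\zeta_{2e})\subseteq\operatorname{Hom}_{\cH_\C}(V^\lambda_{\zeta_e},W^\lambda_{\zeta_e})$ as containing an isomorphism. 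For any other $e$-regular $\theta$, the primitive generator of $H_{\cO'_\theta}$ reduces on the generic fibre into this same subspace, and so is automatically an isomorphism there; combining this with the multiplicity $(V^\lambda_\xi:L^\lambda_\xi)=1$ and the cellular head structure of $W^\lambda_\xi$ should then force the special-fibre reduction to be an isomorphism as well.

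The main obstacle is precisely this last step. The primitive $\cO'_\theta$-generator is automatically a generic-fibre isomorphism, but \emph{a priori} its special-fibre reduction could fail to be surjective; ruling this out essentially amounts to re-establishing, at each $e$-regular $\xi$, that $V^\lambda_\xi$ has $L^\lambda_\xi$ as a simple head. Combining the $\xi_0$-hypothesis with the now-established characteristic-zero isomorphism at $\zeta_e$ (which inherits a simple head from the cell module $W^\lambda_{\zeta_e}$) via the DVRs $\cO'_\theta$ is, I expect, the right mechanism for this — and it is the technical heart of the proof.
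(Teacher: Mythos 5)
Your first half is essentially sound and close in substance to the paper's argument: over a discrete valuation ring with generic fibre $\Q(\zeta_{2e})$ and special fibre inside $k_0$, any $\cH$-homomorphism $V^\lambda\to W^\lambda$ with nonzero reduction at $\xi_0$ must be an isomorphism there (its kernel lies in $U^\lambda$, so its image has $L^\lambda_{\xi_0}$ as a quotient, while the radical of the cell module $W^\lambda_{\xi_0}$ contains no composition factor $L^\lambda_{\xi_0}$ and is its unique maximal submodule), and unit determinant over the DVR then yields $V^\lambda_{\zeta_e}\cong W^\lambda_{\zeta_e}$. The paper gets the same conclusion slightly differently, by fixing one integral intertwiner $P^\lambda$ and transporting the kernel of $V^\lambda_{\zeta_e}\to W^\lambda_{\zeta_e}$ down to $\xi_0$ through the decomposition map $d^e_{\xi_0}$; the two routes are interchangeable here.

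The genuine gap is exactly where you place it, and it is the heart of the proposition: passing from $\theta_0$ and $\zeta_e$ to an \emph{arbitrary} $e$-regular $\theta$. Your per-DVR construction gives, for each such $\theta$, a map over $\cO'_\theta$ that is an isomorphism on the generic fibre, but nothing prevents its special-fibre reduction from having a kernel containing $L^\lambda_\xi$: the unique-maximal-submodule hypothesis lives only at $\xi_0$, and there is no decomposition map from the characteristic-$\ell$ fibre back to the characteristic-$\ell_0$ fibre through which to transport it; knowing $V^\lambda_{\zeta_e}\cong W^\lambda_{\zeta_e}$ alone does not control the reduction of the two lattices at a new prime $\ell$. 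Note also that your sketch never uses the $e$-regularity hypothesis that $\ell$ is good and $e\ell$ divides no degree of $W$, which any correct completion must invoke. The paper closes the gap by a global argument over $A$ rather than one DVR at a time: normalize a single intertwiner $P^\lambda$ with entries in $\Z[v]$ of gcd $1$ and factor $\delta=\det(P^\lambda)=mf_1\cdots f_r$; semisimplicity of the specialisation over $\F_p(v)$ for good $p$ shows $m$ involves only bad primes, semisimplicity over $\Q[v]/(f)$ for $f\nmid P_W(v^2)$ shows each $f_i$ divides $P_W(v^2)$, and the isomorphism at $\zeta_e$ (together with the other square root of $\zeta_e$ when $e$ is odd, a case your sketch also omits) shows each $f_i$ is coprime to $\Phi_e(v^2)$. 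Remark~\ref{signi} --- this is precisely where ``$e\ell$ divides no degree'' enters --- then forces $\theta(\delta)\neq 0$ for every $e$-regular $\theta$, so the one matrix $P^\lambda$ specializes to an isomorphism at all such $\theta$ simultaneously. Your proposed fix (re-establishing a simple head for $V^\lambda_\xi$ at each $e$-regular $\xi$) is not available and is not what the paper does; without the determinant analysis the final step remains unproved.
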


\begin{proof} The module $W^\lambda$ has a standard basis $\{C_{\fs} \mid
\fs \in M(\lambda)\}$;  let $\rho^\lambda \colon \cH \rightarrow 
M_{d_\lambda}(A)$ be the corresponding matrix representation. The module 
$V^\lambda$ also has a standard basis, arising from the underlying 
$W$-graph; let $\sigma^\lambda \colon \cH \rightarrow M_{d_\lambda}(A)$ be 
the corresponding matrix representation. Since $V^\lambda_K \cong 
W^\lambda_K$, there exists an invertible matrix $P^\lambda \in 
M_{d_\lambda}(K)$ such that 
\[\rho^\lambda(T_w)P^\lambda=P^\lambda\sigma^\lambda(T_w)\qquad\mbox{for 
all $w \in W$}.\]
Multiplying $P^\lambda$ by a suitable scalar, we may assume without loss of 
generality that 
\begin{itemize}
\item[$\bullet$] all entries of $P^\lambda$ lie in ${\Z}[v]$ and 
\item[$\bullet$] the greatest common divisor of all non-zero entries of 
$P^\lambda$ is $1$.
\end{itemize}
(Here we use the fact that $R$ was chosen to be contained in $\Q$.) These
conditions uniquely determine $P^\lambda$ up to a sign. Let $\delta:=
\det(P^\lambda)\neq 0$. We need to obtain some more precise information
about the irreducible factors of $\delta$. Let us write
\[ \delta=mf_1f_2\cdots f_r \qquad \mbox{where $0 \neq m \in \Z$ and
$f_1,\ldots,f_r \in {\Z}[v]\setminus \Z$ are irreducible}.\]
First we claim that $m$ is divisible by bad primes only. Indeed, let
$p$ be a prime number which is good for $W$. Then $p$ generates a prime 
ideal in $R$; let $F=\F_p(v)$.  We obtain a specialisation $\alpha 
\colon A \rightarrow F$ by reducing the coefficients of polynomials in 
$A$ modulo~$p$. We have a corresponding specialised algebra $\cH_F(W,u)$. 
Since $\alpha(P_W)\neq 0$, we conclude that $\cH_F(W,u)$ is semisimple and 
the specialized cell modules $W^\lambda_F$ are all irreducible; see  
Proposition~\ref{prop11}. Now note that not all entries of $P^\lambda$ are
divisible by $p$. Hence, reducing the entries of $P^\lambda$ modulo $p$, 
we obtain a non-zero matrix defining a non-trivial module homomorphism
$V^\lambda_F \rightarrow W^\lambda_F$. Since $W^\lambda_F$ is irreducible 
and $\dim W^\lambda_F=\dim V^\lambda_F$, this homomorphism must be an 
isomorphism. Consequently, $P^\lambda$ is invertible modulo $p$ and so 
$p$ cannot divide $m$.

A similar argument shows that each $f_i$ divides $P_W(v^2)$. Indeed, assume 
that $f\in {\Z}[v]$ is a non-constant irreducible polynomial which does not
divide $P_W(v^2)$. Then we have a canonical ring homomorphism $\beta \colon 
A \rightarrow F$ where $F={\Q}[v]/(f)$. Again, the corresponding 
specialised algebra $\cH_F(W,\theta(u))$ is semisimple since $\beta
(P_W)\neq 0$. Arguing as above, we conclude that $f$ does not divide 
$\det(P^\lambda)$. Thus, each $f_i$ must divide $P_W(v^2)$.

Now consider the specialisation $\theta_e \colon A \rightarrow \C$ which
sends $v$ to $\zeta_{2e}$. We can actually regard this as a map with
image in ${\Q}(\zeta_{2e})$ and work with ${\Q}(\zeta_{2e})$ instead of $\C$ 
as base field. Thus, $W_{\zeta_e}^\lambda$ and $V^\lambda_{\zeta_e}$ can be
regarded as ${\Q}(\zeta_{2e})$-vectorspaces and modules for the specialised 
algebra $\cH_{\Q(\zeta_{2e})}(W,\zeta_e)$. Let $\cO$ be a discrete valuation 
ring as in Remark~\ref{compare} with respect to the specialisation 
$\theta_0$; we have a corresponding decomposition map
\[ d_{\xi_0}^e \colon R_0(\cH_{\Q(\zeta_{2e})}(W,\zeta_e))\rightarrow 
R_0(\cH_{k_0}(W,\xi_0)).\] 
Once again, since the greatest common divisor of all its entries is $1$, 
the matrix $P^\lambda$ induces a non-trivial module homomorphism 
$V^\lambda_{\zeta_e} \rightarrow W^\lambda_{\zeta_e}$. We claim that this 
also is an isomorphism. To prove this, let $M \subseteq V^\lambda_{\zeta_e}$ 
be the kernel of the map $V^\lambda_{\zeta_e} \rightarrow 
W^\lambda_{\zeta_e}$; then $M$ is a proper submodule of 
$V^\lambda_{\zeta_e}$. By a standard result (see \cite[23.7]{CR1}), there 
exists a proper submodule $N \subseteq V_{\xi_0}^\lambda$ such that 
\[ d_{\xi_0}^e([M])=[N] \qquad \mbox{and} \qquad d_{\xi_0}
([V^\lambda_{\zeta_e}/M])=[V^\lambda_{\xi_0}/N].\]
If $L_{\zeta_e}^\lambda$ were a composition factor of $M$, then 
$L_{\xi_0}^\lambda$ would be a composition factor of $N$ by 
Lemma~\ref{factor}(b). But then, by our assumption on $V_{\xi_0}^\lambda$ 
and since $N\subseteq U$, the simple module $L_{\xi_0}^\lambda$ 
would appear at least twice as a composition factor of $V_{\xi_0}^\lambda$,
which is absurd. So we conclude that $L_{\zeta_e}^\lambda$ is not a 
composition factor of $M$. Hence, $L_{\zeta_e}^\lambda$ will be a 
composition factor  of the image of the map $V^\lambda_{\zeta_e} 
\rightarrow W^\lambda_{\zeta_e}$. But, by \cite[Prop.~3.2]{GrLe}, 
$L_{\zeta_e}^\lambda$ is a simple quotient of $W^{\lambda}_{\zeta_e}$,
the kernel of the canonical map $W^\lambda_{\zeta_e}\rightarrow 
L^\lambda_{\zeta_e}$ is the unique maximal submodule of 
$W^\lambda_{\zeta_e}$, and $L^\lambda_{\zeta_e}$ is not a composition 
factor of that kernel. So we conclude that the map $V^\lambda_{\zeta_e}
\rightarrow W^\lambda_{\zeta_e}$ is surjective and, hence, an isomorphism. 
It follows that $\delta$ is not divisible by $\Phi_{2e}(v)$.  If $e$ is odd,
we can also consider the specialisation $\theta_e' \colon A \rightarrow \C$ 
sending $v$ to $\zeta_e^{(e+1)/2}$ (the other square root of $\zeta_e$, 
which is a root of $\Phi_e(v)$). Then a similar argument shows that 
$\delta$ is not divisible by $\Phi_e(v)$. Thus, we have reached the 
following conclusions:
\begin{itemize}
\item[$\bullet$] $m$ is divisible by bad primes only;
\item[$\bullet$] each $f_i$ divides $P_W(v^2)$;
\item[$\bullet$] each $f_i$ is coprime to $\Phi_e(v^2)$.
\end{itemize}

We can now complete the proof as follows. Let $\theta \colon A \rightarrow 
k$ be any $e$-regular specialisation. Assume that $\theta(\delta)=0$. Since 
the characteristic of $k$ is a good prime, we must have $\theta(f_i)=0$ for 
some $i\in\{1,\ldots,r\}$. Since each $f_i$ divides $P_W(v^2)$, there 
exists some $d \geq 2$ such that $\Phi_d(v^2)$ divides $P_W(v^2)$ and 
$f_i$ divides $\Phi_d(v^2)$. By Remark~\ref{signi}, we conclude that $d=e$. 
Thus, we see that $f_i$ divides $\Phi_{e}(v^2)$, a contradiction. Hence, 
our assumption was wrong and so we do have $\theta(\delta)\neq 0$. Thus, 
we have shown that $P^\lambda$ induces an isomorphism $V^\lambda_\xi 
\stackrel{\sim}{\rightarrow} W^\lambda_\xi$.
\end{proof} 

Let $\theta_0 \colon A \rightarrow k_0$ be a specialisation as in
Proposition~\ref{help1}. Using the {\tt MKSUB} function in Ringe's version 
\cite{Ri} of the {\sf MeatAxe} (see also \cite{LuMuRi}), we can determine 
the complete submodule lattice of $V^\lambda_{\xi_0}$. Using the {\tt CHOP} 
function and Lemma~\ref{identify} as discussed in the previous subsection, 
we can identify the various irreducible constituents of $V^\lambda_{\xi_0}$ 
and check that the assumption of Proposition~\ref{help1} is satisfied. Thus, 
Problem~\ref{majprob}(b) is solved.

It might actually be true that $W^\lambda$ and $V^\lambda$ are 
isomorphic as $\cH$-modules, but we have not been able to prove this. 
We would like to state this as a conjecture:

\medskip
\begin{conj} \label{Cwgraph} Assume that, for each $\lambda \in \Lambda$, 
we are given a $W$-graph affording an $\cH$-module $V^\lambda$ such that 
$V^\lambda_K \cong E_v^\lambda$. Then the cellular basis in 
Theorem~\ref{Hcellbase} can be chosen such that $W^\lambda \cong V^\lambda$
for all $\lambda \in \Lambda$.
\end{conj}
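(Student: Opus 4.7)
The plan is to promote the local comparison of Proposition~\ref{help1} to a global $A$-module isomorphism by exploiting the freedom present in the construction of the cellular basis. I would start by fixing an arbitrary cellular basis $\{C^\lambda_{\fs,\ft}\}$ satisfying Theorem~\ref{Hcellbase} and, for each $\lambda \in \Lambda$, by choosing an intertwiner $P^\lambda \in M_{d_\lambda}(A)$ from $V^\lambda$ to $W^\lambda$, normalised so that its entries lie in $\Z[v]$ and have greatest common divisor~$1$. By the argument in Proposition~\ref{help1},
\[ \delta^\lambda := \det(P^\lambda) = m^\lambda \prod_i f^\lambda_i, \]
where $m^\lambda$ is a unit in $R$ and each $f^\lambda_i$ is a non-constant irreducible polynomial dividing $P_W(v^2)$. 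The conjecture reduces to the statement that the factors $f^\lambda_i$ can be eliminated, for every $\lambda$ simultaneously, by a judicious change of cellular basis.

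Next I would analyse the degrees of freedom built into Theorem~\ref{Hcellbase}. Inspection of the proof of \cite[Theorem~3.1]{mycell} shows that each $C^\lambda_{\fs,\ft}$ is a $\Z$-linear combination of Kazhdan--Lusztig basis elements $c^\dagger_w$ with $\ba(w)=\ba_\lambda$, with coefficients coming from a choice of integral basis for the space of leading matrix coefficients attached to $E^\lambda_v$. Any admissible change of that auxiliary basis yields a new cellular datum under which $W^\lambda$ is replaced by an isomorphic module, but the intertwiner $P^\lambda$ is transformed by a matrix in $\mathrm{GL}_{d_\lambda}(A)$ acting from one side. The task is to enlarge, or to choose within, this symmetry group in a way that absorbs every $f^\lambda_i$.

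The key step would be local-to-global. Since $f = f^\lambda_i$ divides $\Phi_d(v^2)$ for some $d \geq 2$ dividing a degree of $W$, localising $A$ at $(f)$ places us in the situation of Proposition~\ref{help1} with $e=d$: the cokernel of $P^\lambda$ over $A_{(f)}$ becomes a torsion module whose composition series is controlled by the decomposition numbers at the specialisation $\theta_d$. Using the explicit $W$-graph description of $V^\lambda$, together with Lusztig's leading-term theory applied to the matrix coefficients of $T_w$ on $V^\lambda$, I would attempt to construct an explicit modification of the cellular basis that strictly lowers the $f$-adic valuation of $\delta^\lambda$. Iterating over the finite set of primes $f^\lambda_i$ and over $\lambda \in \Lambda$ would then make each $\delta^\lambda$ a unit in $A$, producing the desired isomorphism.

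The main obstacle, and the reason the statement remains conjectural, lies in carrying out this reduction procedure uniformly while respecting the cellular axioms (C1)--(C3). A natural attempt is to read off the coefficients in the expansions $C^\lambda_{\fs,\ft} = \sum_{\ba(w)=\ba_\lambda} \gamma_{w,\fs,\ft}\, c^\dagger_w$ directly from the $W$-graph matrix coefficients of $V^\lambda$, hoping that the $\ba$-filtration and the orthogonality relations for $\Irr(\cH_K)$ force the resulting elements to form an integral basis realising the full cellular structure. Verifying that such a canonical choice simultaneously satisfies (C1)--(C3) and produces an $A$-isomorphism $W^\lambda \cong V^\lambda$ appears to require substantially more structural input---very likely positivity properties of the structure constants $h_{x,y,z}$ or a better understanding of the Howlett--Yin construction---and this is the step I expect to be the genuine obstacle.
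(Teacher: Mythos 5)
The statement you are trying to prove is stated in the paper as an open conjecture: the authors explicitly write that it ``might actually be true'' that $W^\lambda\cong V^\lambda$ but that they ``have not been able to prove this''. So there is no proof in the paper to compare against, and your text is, by your own admission, a strategy outline rather than a proof: the decisive step (eliminating the non-constant irreducible factors $f^\lambda_i$ of $\det(P^\lambda)$ by a change of cellular basis while preserving (C1)--(C3)) is exactly what you leave open. As it stands, the proposal therefore contains a genuine gap and does not establish the conjecture.

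Beyond that, the specific mechanism you propose for the reduction looks doomed in the form stated. You describe the available freedom as an action of $\mathrm{GL}_{d_\lambda}(A)$ on one side of the intertwiner $P^\lambda$, coming from a change of the auxiliary integral basis in the construction of Theorem~\ref{Hcellbase}. But any new cellular basis obtained from the old one by an $A$-linear base change within $M(\lambda)$ (compatible with the anti-involution $*$, as (C2) forces) produces a cell module that is \emph{isomorphic over $A$} to the original $W^\lambda$: the structure constants $r_h(\fs',\fs)$ are merely conjugated by a matrix in $\mathrm{GL}_{d_\lambda}(A)$, so $\det(P^\lambda)$ changes only by a unit of $A$ and the $f$-adic valuation you want to lower is invariant. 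To change the lattice $W^\lambda\subseteq W^\lambda_K$ genuinely one must change the underlying $K$-realization (the balanced realization with prescribed leading matrix coefficients) used in the proof of \cite[Theorem~3.1]{mycell}, i.e.\ conjugate over $K$ rather than over $A$; and then nothing guarantees that the resulting elements are still $\Z$-linear combinations of the $c_w^\dagger$ with $\ba(w)=\ba_\lambda$, nor that (C1)--(C3) survive. That is precisely the structural input (positivity, or finer knowledge of the Howlett--Yin graphs) whose absence is the reason the statement is a conjecture, so the local-to-global iteration you sketch has no engine driving it.
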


\medskip
\subsection{Solving Problem~\ref{majprob}(c)} \label{subP3}

Let $\lambda \in \lambda_{\zeta_e}^\circ$ and $G^\lambda$ be the Gram 
matrix of the invariant bilinear form $\phi^\lambda$ with respect to the 
standard basis of $W^\lambda$. Instead of $W^\lambda$, we now consider 
the module $V^\lambda$ and assume that the hypotheses of 
Proposition~\ref{help1} are satisfied. Thus, we have $V^\lambda_{\zeta_e} 
\cong W^\lambda_{\zeta_e}$ and $V^\lambda_\xi \cong W^\lambda_\xi$ for 
any $e$-regular specialisation $\theta \colon A \rightarrow k$.

Let $\sigma^\lambda \colon \cH \rightarrow M_{d_\lambda}(A)$ be the matrix 
representation afforded by $V^\lambda$ with respect to the standard basis
arising from the underlying $W$-graph. Our task now is to find some 
non-zero matrix $Q^\lambda \in M_{d_\lambda}(A)$ such that 
\begin{equation*}
 Q^\lambda \cdot \sigma^\lambda(T_s)=\sigma^\lambda(T_s)^{\text{tr}}
\cdot Q^\lambda \qquad \mbox{for all $s \in S$}.\tag{$*$}
\end{equation*}
Note that ($*$) implies that $Q^\lambda \cdot \sigma^\lambda(T_{w^{-1}})=
\sigma^\lambda(T_w)^{\text{tr}} \cdot Q^\lambda$ for all $w \in W$. So 
any solution to ($*$) is the Gram matrix of an invariant bilinear form on 
$V^\lambda$. Multiplying $Q^\lambda$ by a suitable scalar, we may assume 
without loss of generality that
\begin{itemize}
\item[$\bullet$] all entries of $Q^\lambda$ lie in ${\Z}[v]$ and
\item[$\bullet$] the greatest common divisor of all non-zero entries of
$Q^\lambda$ is $1$.
\end{itemize}
Note that, by Schur's Lemma, any two matrices satisfying ($*$) are scalar 
multiples of each other. Hence, the above conditions uniquely determine
$Q^\lambda$ up to a sign.

\medskip
\begin{lem} \label{help2} Assume that $Q^\lambda$ is a solution to 
{\rm ($*$)} satisfying the above conditions. Then 
\[\operatorname{rank}(Q_{\zeta_e})=\operatorname{rank}(G^\lambda_{\zeta_e})
\qquad \mbox{and}\qquad \operatorname{rank}(Q_{\xi})=\operatorname{rank}
(G^\lambda_{\xi})\]
for any $e$-regular specialisation $\theta \colon A \rightarrow k$.
\end{lem}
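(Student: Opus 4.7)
The plan is to compare the two invariant forms via the intertwiner supplied by Proposition~\ref{help1}, and then reduce the claim to a single valuation-theoretic computation. Let $P^\lambda\in M_{d_\lambda}(K)$ be the matrix from that proof, so that $\rho^\lambda(T_w)P^\lambda=P^\lambda\sigma^\lambda(T_w)$ for all $w\in W$. A short calculation shows that the matrix $N:=(P^\lambda)^{\text{tr}}G^\lambda P^\lambda$ satisfies the invariance condition $(*)$ for $\sigma^\lambda$; since $V^\lambda_K\cong E^\lambda_v$ is absolutely irreducible, Schur's lemma forces $N=c\,Q^\lambda$ in $M_{d_\lambda}(K)$ for some $c\in K^\times$.

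The strategy is now clear. If one can show that $\theta(c)\neq 0$ (and similarly $\theta_e(c)\neq 0$), then applying $\theta$ to $c\,Q^\lambda=(P^\lambda)^{\text{tr}}G^\lambda P^\lambda$ yields $\theta(c)\,\theta(Q^\lambda)=\theta(P^\lambda)^{\text{tr}}\theta(G^\lambda)\theta(P^\lambda)$. Since $\theta(P^\lambda)$ is invertible---this is precisely the content of the analysis of $\det(P^\lambda)$ given in the proof of Proposition~\ref{help1}---the right-hand side has the same rank as $\theta(G^\lambda)$, and hence so does $\theta(Q^\lambda)$. Everything thus reduces to showing that $c$ does not lie in the kernel of either $\theta$ or $\theta_e$.

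For this I would factor $\theta$ through a discrete valuation ring $\cO\subseteq K$ with maximal ideal $\fp$, as in Remark~\ref{brauer}, so that $A\subseteq\cO$ and $\fp\cap A=\ker(\theta)$; let $\nu$ denote the associated valuation on $K$. Because $A$ is a unique factorisation domain in which all bad primes have been inverted, and the entries of $Q^\lambda$ are coprime in $\Z[v]$, no nonunit of $\cO$ can divide all entries of $Q^\lambda$, and hence at least one index $(i,j)$ satisfies $\nu(Q^\lambda_{ij})=0$. The relation $N_{ij}=c\,Q^\lambda_{ij}$ then gives $\nu(c)=\min_{i,j}\nu(N_{ij})$, so $\theta(c)=0$ is equivalent to $\theta(N)=0$ as a matrix over $k$.

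To finish, one rules out $\theta(N)=0$: the factorisation $\theta(N)=\theta(P^\lambda)^{\text{tr}}\theta(G^\lambda)\theta(P^\lambda)$ with $\theta(P^\lambda)$ invertible shows that $\theta(N)=0$ would force $\theta(G^\lambda)=0$, i.e.\ $\phi^\lambda_\xi\equiv 0$, whence $L^\lambda_\xi=0$; but $\lambda\in\Lambda_{\zeta_e}^\circ=\Lambda_\xi^\circ$ by Theorem~\ref{thm13} and Lemma~\ref{factor}, a contradiction. The $\zeta_e$-case is identical, using that $\det(P^\lambda)$ is coprime to $\Phi_{2e}(v)$ (and to $\Phi_e(v)$ when $e$ is odd), as established in the proof of Proposition~\ref{help1}, together with $L^\lambda_{\zeta_e}\neq 0$ by the standing hypothesis on $\lambda$. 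I expect the main delicate point to be the content/valuation bookkeeping in the third paragraph: verifying that the normalisation ``coprime entries in $\Z[v]$'' is inherited by $\cO$ hinges on the fact that bad primes are already units in $R$, so that they cannot be extracted as a common factor upon passing to the local ring.
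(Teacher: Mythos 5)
Your overall route---comparing $Q^\lambda$ with $N=(P^\lambda)^{\mathrm{tr}}G^\lambda P^\lambda$ via Schur's lemma, using the invertibility of $P^\lambda_\xi$ and $P^\lambda_{\zeta_e}$ and the nonvanishing of $G^\lambda_\xi$, $G^\lambda_{\zeta_e}$---is exactly the paper's argument, but there is a genuine gap in your third paragraph. From ``the entries of $Q^\lambda$ are coprime in $\Z[v]$'' you infer that no nonunit of $\cO$ divides all entries, hence that some entry has valuation $0$ at $\fp$, i.e.\ that $Q^\lambda_\xi\neq 0$. This does not follow: $\fp\cap\Z[v]$ is in general a non-principal maximal ideal containing both $\ell$ and a lift of the minimal polynomial of $\theta(v)$ over $\F_\ell$, and coprime polynomials can lie in it simultaneously---for instance $\ell$ and $\Phi_{2e}(v)$ are coprime in $\Z[v]$, yet both lie in $\ker(\theta)$ for the specialisations considered here. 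In a discrete valuation ring every element of the maximal ideal is divisible by the uniformizer, so coprimality in $\Z[v]$ does not prevent all entries from having positive valuation, and inverting the bad primes is irrelevant since $\ell$ is good and remains a nonunit. Without $\min_{i,j}\nu(Q^\lambda_{ij})=0$ you cannot conclude $\nu(c)=\min_{i,j}\nu(N_{ij})$; a priori $\nu(c)$ could be negative, $\theta(c)$ would then be undefined, and the equivalence ``$\theta(c)=0\Leftrightarrow\theta(N)=0$'' and the ensuing rank comparison collapse. Moreover $Q^\lambda_\xi\neq 0$ is essentially part of what the lemma asserts, so assuming it at this stage is close to circular.

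The repair is to use the gcd-$1$ normalisation on the scalar rather than on the specialisation, which is what it is there for: since $N$ has entries in $\Z[v,v^{-1}]$ and the nonzero entries of $Q^\lambda$ have greatest common divisor $1$, Gauss's lemma applied entrywise to $N=c\,Q^\lambda$ gives $c\in\Z[v,v^{-1}]\subseteq A$, so $\nu(c)\geq 0$ and both $\theta(c)$ and $\theta_e(c)$ are defined. Your final paragraph then finishes the proof exactly as in the paper: $N_\xi=\theta(P^\lambda)^{\mathrm{tr}}\,G^\lambda_\xi\,\theta(P^\lambda)$ has the same rank as $G^\lambda_\xi$, which is nonzero because $\lambda\in\Lambda_{\zeta_e}^\circ=\Lambda_\xi^\circ$; hence $N_\xi=\theta(c)Q^\lambda_\xi\neq 0$ forces $\theta(c)\neq 0$, so $\operatorname{rank}(Q^\lambda_\xi)=\operatorname{rank}(N_\xi)=\operatorname{rank}(G^\lambda_\xi)$, and the case of $\theta_e$ is identical. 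With that replacement your argument coincides with the paper's proof.
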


\begin{proof} We are assuming that $\lambda \in \Lambda_{\zeta_e}^\circ=
\Lambda_\xi^\circ$, so we have $G^\lambda_{\zeta_e}\neq 0$ and 
$G^\lambda_{\xi}\neq 0$.

Now let $P^\lambda$ be as in the proof of Proposition~\ref{help1} and set
$\tilde{Q}^\lambda:=(P^\lambda)^{\text{tr}}G^\lambda P^\lambda$. Then
$\tilde{Q}^\lambda$ is a solution to ($*$) and so there exists some
$0 \neq \alpha \in K$ such that $\tilde{Q}^\lambda=\alpha Q^\lambda$.
Since all three matrices $G^\lambda$, $Q^\lambda$ and $\tilde{Q}^\lambda$
have all their entries in ${\Z}[v,v^{-1}]$ and since the greatest 
common divisior of the entries of $Q^\lambda$ is $1$, we can conclude 
that $\alpha \in {\Z}[v,v^{-1}]$. 

Now, in the proof of Proposition~\ref{help1}, we have actually seen that
$P^\lambda_{\zeta_e}$ and $P^\lambda_\xi$ are invertible. Since we also
have $G^\lambda_{\zeta_e}\neq 0$ and $G^\lambda_{\xi}\neq 0$, it follows that 
\[\operatorname{rank}(\tilde{Q}_{\zeta_e})=\operatorname{rank}
(G^\lambda_{\zeta_e})>0 \qquad \mbox{and}\qquad \operatorname{rank}
(\tilde{Q}_{\xi})=\operatorname{rank} (G^\lambda_{\xi})>0.\]
But then it also follows that $\theta_e(\alpha)\neq 0$ and 
$\theta(\alpha)\neq 0$. Hence, we have $\operatorname{rank}(
\tilde{Q}_{\zeta_e})=\operatorname{rank}(Q^\lambda_{\zeta_e})$ and
$\operatorname{rank}(\tilde{Q}_{\xi})=\operatorname{rank}(Q^\lambda_{\xi})$,
and this yields the desired statement.
\end{proof}

It remains to show how a solution to ($*$) can actually be computed. Note 
that ($*$) constitutes a system of $|S|d_\lambda^2$ linear equations for 
the $d_\lambda^2$ entries of $Q^\lambda$. If $d_\lambda$ is not too large, 
this can be solved directly. However, in type $E_8$, we have $d_\lambda=7168$
for some $\lambda$, and our system of linear equations simply becomes too 
large. In such cases, different techniques are required which are based 
on the following result: 

\medskip
\begin{thm}[Benson--Curtis; see \protect{\cite[\S 6.3]{ourbuch}}] 
\label{becu} Each $E^\mu\in \Irr(W)$ is of parabolic type, that is, there 
exists a subset $I \subseteq S$ such that the restriction of $E^\mu$ 
to the parabolic subgroup $W_I \subseteq W$ contains the trivial 
representation of $W_I$ just once. A similar statement holds when
``trivial representation'' is replaced by ``sign representation''.
\end{thm}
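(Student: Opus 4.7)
The plan is to prove the theorem by a case-by-case analysis along the Cartan--Killing classification of irreducible finite Weyl groups. By Frobenius reciprocity, finding $I \subseteq S$ such that $\mathrm{triv}_{W_I}$ appears exactly once in $E^\mu|_{W_I}$ is equivalent to finding $I$ such that $E^\mu$ appears with multiplicity one in the permutation character $\mathrm{Ind}_{W_I}^W(\mathrm{triv}_{W_I})$. The sign version is handled either by tensoring with the sign character $\varepsilon$ of $W$ (which sends the family of parabolic-type data for $\mathrm{triv}$ to analogous data for $\varepsilon$, since $\varepsilon|_{W_I}$ is the sign character of $W_I$) or by a parallel direct argument.

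First I would dispatch type $A_{n-1}$. Here $W=\mathfrak{S}_n$, $\Irr(W)=\{E^\lambda\mid \lambda\vdash n\}$, and the standard parabolic subgroups are the Young subgroups $\mathfrak{S}_\lambda$. By Young's rule, the permutation module $M^\lambda=\mathrm{Ind}_{\mathfrak{S}_\lambda}^{\mathfrak{S}_n}(\mathrm{triv})$ decomposes as $M^\lambda=\bigoplus_\mu K_{\mu\lambda}E^\mu$ with Kostka numbers $K_{\mu\lambda}$, and $K_{\lambda\lambda}=1$ while $K_{\mu\lambda}=0$ unless $\mu\trianglerighteq\lambda$. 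Choosing $I$ so that $W_I=\mathfrak{S}_\lambda$ thus gives the required $I$ for each $E^\lambda$. The sign version is obtained by tensoring with $\varepsilon$ (or, equivalently, using the column-stabiliser instead of the row-stabiliser).

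Next I would handle the classical types $B_n$, $C_n$, $D_n$, where $\Irr(W)$ is parametrised by pairs of partitions (or suitable equivalence classes thereof in type $D$). The standard parabolic subgroups are products of symmetric groups with a smaller Weyl group of the same type. Using the Clifford-theoretic description of $\Irr(W)$ as induced from $\mathfrak{S}_n\ltimes(\Z/2)^n$ together with Young's rule for each of the two partitions, one constructs an explicit $I$ such that $\mathrm{Ind}_{W_I}^W(\mathrm{triv})$ contains the given irreducible with multiplicity one; the key combinatorial ingredient is again the unitriangularity of Kostka-type numbers with respect to the dominance order on bipartitions. Type $D_n$ is then reduced to type $B_n$ via restriction, taking care of the $\pm$-split of characters labelled by a pair $(\alpha,\alpha)$.

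Finally, the exceptional types $G_2$, $F_4$, $E_6$, $E_7$, $E_8$ are handled by direct inspection of the character tables of $W$ and of all standard parabolic subgroups $W_I$. For each $E^\mu\in\Irr(W)$ one exhibits some $I\subseteq S$ with $\langle E^\mu|_{W_I},\mathrm{triv}_{W_I}\rangle_{W_I}=1$; this verification is a finite calculation which can be carried out mechanically in \textsf{CHEVIE}. The main obstacle in principle is the exceptional case---in particular $E_8$, where there are $112$ irreducible representations to check---but this is purely computational and poses no conceptual difficulty once the induce/restrict decomposition tables of \cite{ourbuch} are available. In all cases the same $I$ (or its image under the longest element), after twisting by $\varepsilon$, provides the parabolic subgroup witnessing the sign version, completing the proof.
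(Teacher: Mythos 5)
The paper does not actually prove Theorem~\ref{becu}: it is quoted as a known result from \cite{ourbuch}, where it is established exactly along the lines you propose --- Frobenius reciprocity, explicit combinatorics for the classical types, and a finite induce/restrict verification for the exceptional types whose outcome is recorded in tables (this is what the paper itself appeals to in Example~\ref{dim10} via Table~C.4 of \cite{ourbuch}). So your overall strategy is the standard one. Your reduction of the sign version to the trivial version by tensoring with $\varepsilon$ is correct (the subset $I$ attached to $E^\mu\otimes\varepsilon$ for the trivial character serves $E^\mu$ for the sign character, and $\mu\mapsto\mu\otimes\varepsilon$ is a bijection), the type $A_{n-1}$ case via Young's rule and $K_{\lambda\lambda}=1$ is complete, and delegating $G_2,F_4,E_6,E_7,E_8$ to a mechanical check of the induce/restrict tables is legitimate and is precisely how the cited source handles them.

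The genuine soft spot is the $B_n/C_n/D_n$ paragraph, which as written is an assertion rather than an argument. The parabolic subgroups of $W(B_n)$ are of the form $W(B_a)\times\mathfrak{S}_{\mu_1}\times\cdots\times\mathfrak{S}_{\mu_r}$, and the trivial character of the $W(B_a)$-factor is the one labelled by the bipartition $((a),\varnothing)$; the induced trivial character then decomposes by iterated bi-Pieri rules (each $\mathfrak{S}_{\mu_i}$ adds $\mu_i$ boxes distributed over the two components, forming horizontal strips). There is therefore no direct analogue of ``take the Young subgroup $\mathfrak{S}_\lambda$'', and ``unitriangularity of Kostka-type numbers for bipartitions'' does not by itself produce a subset $I$ with multiplicity exactly one: for $\chi^{((1,1),(1,1))}$ in $W(B_4)$, the naive choice $W(B_1)\times\mathfrak{S}_1\times\mathfrak{S}_1\times\mathfrak{S}_1$ gives multiplicity $3$ (and $W(B_2)\times\cdots$ gives $0$), whereas the correct choice $W(B_1)\times\mathfrak{S}_2\times\mathfrak{S}_1$ gives multiplicity $1$. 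So the choice of $I$ must be engineered and the multiplicity-one count actually carried out. Similarly, for the degenerate characters of $W(D_n)$ labelled by pairs $(\alpha,\alpha)$, ``reduction to $B_n$ via restriction'' does not work as stated: the relevant $B$-type parabolics are not contained in $W(D_n)$, and multiplicity-one data for the $B_n$-character only controls the sum of the two split constituents, so each of $\chi^{(\alpha,\alpha),\pm}$ needs its own verification inside suitable parabolic subgroups of $D_n$ (which are permuted by the graph automorphism). With these details supplied --- they are standard and are worked out in \cite{ourbuch} --- your proof coincides with the accepted one.
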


\medskip
Now the main idea is as follows: Since the bijection $\Irr(W) 
\leftrightarrow \Irr(\cH_K)$ arising from Tits' Deformation Theorem is 
compatible with restriction to parabolic subgroups and subalgebras (see 
\cite[9.1.9]{ourbuch}), the above result means that there exists a subset 
$I \subseteq S$ such that 
\[ \dim_K \Bigl(\bigcap_{s \in I} \ker\bigl(\sigma^\lambda_K(T_s+T_1)
\bigr)\Bigr)=1;\]
let $e_1 \in K^{d_\lambda}$ be a vector spanning this one-dimensional space.
Similarly, 
\[ \dim_K\Bigl(\bigcap_{s \in I} \ker\bigl(\sigma^\lambda_K(T_s+
T_1)^{\text{tr}} \bigr)\Bigr)=1;\]
let $v_1\in K^{d_\lambda}$ be a vector spanning this one-dimensional space.
Now, since $\sigma_K^\lambda$ is an irreducible representation of $\cH_K$, 
there exist $w_2,\ldots,w_{d_\lambda} \in W$ such that the vectors
\[ e_1,\quad e_2:=\sigma^\lambda(T_{w_2})e_1, \quad e_3:=\sigma^\lambda
(T_{w_3})e_1,\quad \ldots, \quad e_{d_\lambda}:=\sigma^\lambda
(T_{w_{d_\lambda}})e_1,\]
form a basis of $K^{d_\lambda}$. Then the vectors
\[ v_1,\quad v_2:=\sigma^\lambda(T_{w_2^{-1}})^{\text{tr}}v_1, \quad 
v_3:=\sigma^\lambda (T_{w_3^{-1}})^{\text{tr}}v_1,\quad \ldots, \quad 
v_{d_\lambda}:=\sigma^\lambda (T_{w_{d_\lambda}^{-1}})^{\text{tr}}v_1,\]
will also form a basis of $K^{d_\lambda}$. Hence, there exists a unique
invertible matrix $\tilde{Q}^\lambda\in M_{d_\lambda}(K)$ such that 
$v_i= \tilde{Q}^\lambda e_i$ for $1\leq i \leq d_\lambda$. Then 
$\tilde{Q}^\lambda \cdot \sigma^\lambda(T_{w}) \cdot
(\tilde{Q}^\lambda)^{-1}=\sigma^\lambda(T_{w^{-1}})^{\text{tr}}$ for 
all $w\in W$ and so $\tilde{Q}^\lambda$ is a solution to ($*$). 
Multiplying by a suitable scalar, we obtain $Q^\lambda$. 

The above technique is known as the ``standard base'' algorithm; see the 
{\tt ZSB} function of Ringe's {\sf MeatAxe} \cite{Ri} and its description. 
In practice, we did not apply it to $\sigma^\lambda$ itself but to various 
specialisations into finite fields such that the specialised algebra 
remains semisimple. For each such specialisation, we use the {\tt ZSB} 
function to find the Gram matrix of an invariant bilinear form. Using 
interpolation and modular techniques (Chinese Remainder), one can recover 
$Q^\lambda$ from these specialisations.

Having computed $Q^\lambda$, we substitute $v \mapsto \sqrt[2e]{1}$ and
determine the rank of the specialised matrix. Arguing as in the proof of
Proposition~\ref{largeell}, we find the {\em finite} set of prime numbers 
$\mathcal{P}_e$ such that $\text{rank}(Q^\lambda_\xi)=\text{rank}
(Q_{\zeta_e}^\lambda)$ if $\ell \not\in \mathcal{P}_e$. See \cite{gemu2}
for further details.

\medskip
\begin{rem} \label{zsb} Assume we are in the above setting, where $I \subseteq
S$ is a subset such that the restriction of $E^\lambda$ to $W_I$ contains
the sign representation exactly once. Then, by the formulas in 
Definition~\ref{wgraph}, the vector $e_1$ can be taken to be contained in 
the standard basis of $K^{d_\lambda}$. Since $v_1=\tilde{Q}^\lambda e_1$,
we conclude that $v_1$ is a column of the matrix $\tilde{Q}^\lambda$. In
other words, using Theorem~\ref{becu}, one column of the matrix 
$\tilde{Q}^\lambda$ can be computed by simply determining the intersection 
of the kernels of the maps $\sigma_K^\lambda (T_s+T_1)^{\text{tr}}$ where 
$s$ runs over the generators in $I$.
\end{rem}

\begin{table} 
\caption{$W$-graph and invariant bilinear form for the 
representation $10_s$ in type $E_6$}
\begin{center}
\begin{picture}(300,150)
\put( 60,140){\line(1,0){190}}
\put(250,140){\line(0,-1){59}}
\put(250, 10){\line(0,1){59}}
\put( 60,140){\line(0,-1){24}}
\put( 60, 10){\line(0,1){24}}
\put( 67,110){\line(1,0){146}}
\put( 60, 10){\line(1,0){190}}
\put( 60, 20){\line(1,0){160}}
\put( 67, 40){\line(1,0){146}}
\put( 30,130){\line(1,0){190}}
\put( 30, 20){\line(1,0){190}}
\put( 30, 20){\line(0,1){49}}
\put( 30,130){\line(0,-1){49}}
\put(220, 20){\line(0,1){14}}
\put(220,130){\line(0,-1){14}}
\put(220, 46){\line(0,1){23}}
\put(220,104){\line(0,-1){23}}
\put( 60, 46){\line(0,1){23}}
\put( 60,104){\line(0,-1){23}}
\put( 66,107){\line(4,-3){39}}
\put( 66, 44){\line(4,3){38}}
\put(214, 44){\line(-4,3){38}}
\put(214,106){\line(-4,-3){38}}
\bezier{200}(36,78)(70,100)(104,76)
\bezier{400}(67,73)(140,35)(206,73)
\bezier{200}(176,76)(210,100)(244,79)
\put( 30,75){\oval(14,12)}
\put( 24,73){$\scriptstyle{145}$}
\put( 59,110){\oval(16,12)}
\put( 52,108){$\scriptstyle{2146}$}
\put( 60,75){\oval(14,12)}
\put( 54,73){$\scriptstyle{146}$}
\put( 60, 40){\oval(14,12)}
\put( 54, 38){$\scriptstyle{346}$}
\put(110,75){\oval(12,12)}
\put(108,73){$\scriptstyle{4}$}
\put(170,75){\oval(13,12)}
\put(164,73){$\scriptstyle{235}$}
\put(217,75){\oval(22,12)}
\put(207,73){$\scriptstyle{12356}$}
\put(220, 40){\oval(14,12)}
\put(216, 38){$\scriptstyle{35}$}
\put(220,110){\oval(14,12)}
\put(215,108){$\scriptstyle{215}$}
\put(250,75){\oval(14,12)}
\put(244,73){$\scriptstyle{236}$}
\put( 37,75){\line(1,0){16}}
\put(116,75){\line(1,0){47}}
\put(228,75){\line(1,0){15}}
\end{picture}
\end{center}
\begin{align*}
Q^{10_s}&= \left[\begin{array}{cccccccccc}
  v^6 {+} 3v^4 {+} 3v^2 {+} 1&2v^4 {+} 2v^2&2v^4 {+} 2v^2&{-}v^5 {-} 2v^3 {-}
 v& 2v^4 {+} 2v^2 \\
   2v^4 {+} 2v^2&v^6 {+} 3v^4 {+} 3v^2 {+} 1&2v^4 {+} 2v^2&{-}v^5 {-} 2v^3 
{-} v& 2v^4 {+} 2v^2 \\
   2v^4 {+} 2v^2&2v^4 {+} 2v^2&v^6 {+} 3v^4 {+} 3v^2 {+} 1&{-}v^5 {-} 2v^3 
{-} v& 2v^4 {+} 2v^2 \\
   {-}v^5 {-} 2v^3 {-} v&{-}v^5 {-} 2v^3 {-} v&{-}v^5 {-} 2v^3 {-} v&
      v^6 {+} 2v^4 {+} 2v^2 {+} 1&{-}v^5 {-} 2v^3 {-} v \\
   2v^4 {+} 2v^2&2v^4 {+} 2v^2&2v^4 {+} 2v^2&{-}v^5 {-} 2v^3 {-} v&
      v^6 {+} 3v^4 {+} 3v^2 {+} 1 \\
   {-}v^5 {-} 2v^3 {-} v&{-}v^5 {-} 2v^3 {-} v&{-}v^5 {-} 2v^3 {-} v&v^4 
{+} v^2&{-}2v^3 \\
   2v^4 {+} 2v^2&2v^4 {+} 2v^2&2v^4 {+} 2v^2&{-}2v^3&2v^4 {+} 2v^2 \\
   {-}v^5 {-} 2v^3 {-} v&{-}v^5 {-} 2v^3 {-} v&{-}2v^3&v^4 {+} v^2&{-}v^5 
{-} 2v^3 {-} v \\
   {-}v^5 {-} 2v^3 {-} v&{-}2v^3&{-}v^5 {-} 2v^3 {-} v&v^4 {+} v^2&{-}v^5 
{-} 2v^3 {-} v \\
   {-}2v^3&{-}v^5 {-} 2v^3 {-} v&{-}v^5 {-} 2v^3 {-} v&v^4 {+} v^2&{-}v^5 
{-} 2v^3 {-} v \\
\end{array}\right.\\[2mm]
&\qquad \left.\begin{array}{cccccccccc}
 {-}v^5 {-} 2v^3 {-} v&2v^4 {+} 2v^2&{-}v^5 {-} 2v^3 {-} v&{-}v^5 {-} 2v^3 
{-} v& {-}2v^3 \\
   {-}v^5 {-} 2v^3 {-} v&2v^4 {+} 2v^2&{-}v^5 {-} 2v^3 {-} v&{-}2v^3&{-}v^5 
{-} 2v^ 3 {-} v \\
   {-}v^5 {-} 2v^3 {-} v&2v^4 {+} 2v^2&{-}2v^3&{-}v^5 {-} 2v^3 {-} v&
{-}v^5 {-} 2v^3 {-} v \\  
v^4 {+} v^2&{-}2v^3&v^4 {+} v^2&v^4 {+} v^2&v^4 {+} v^2 \\
   {-}2v^3&2v^4 {+} 2v^2&{-}v^5 {-} 2v^3 {-} v&{-}v^5 {-} 2v^3 {-} v&
      {-}v^5 {-} 2v^3 {-} v \\
   v^6 {+} 2v^4 {+} 2v^2 {+} 1&{-}v^5 {-} 2v^3 {-} v&v^4 {+} v^2&v^4 {+} 
v^2& v^4 {+} v^2 \\
   {-}v^5 {-} 2v^3 {-} v&v^6 {+} 3v^4 {+} 3v^2 {+} 1&{-}v^5 {-} 2v^3 {-} v&
      {-}v^5 {-} 2v^3 {-} v&{-}v^5 {-} 2v^3 {-} v \\
   v^4 {+} v^2&{-}v^5 {-} 2v^3 {-} v&v^6 {+} 2v^4 {+} 2v^2 {+} 1&v^4 {+} v^2&
      v^4 {+} v^2 \\
   v^4 {+} v^2&{-}v^5 {-} 2v^3 {-} v&v^4 {+} v^2&v^6 {+} 2v^4 {+} 2v^2 {+} 1&
      v^4 {+} v^2 \\
   v^4 {+} v^2&{-}v^5 {-} 2v^3 {-} v&v^4 {+} v^2&v^4 {+} v^2&v^6 {+} 2v^4 {+}
 2v^2 {+} 1 \end{array}\right]
\end{align*}
\label{wgraphf} 
\end{table}

\medskip
\begin{exmp} \label{dim10} In general, the matrix $Q^\lambda$ is far from
being sparse. We just give one example. Let $W$ be of type $E_6$ with 
Dynkin diagram
\begin{center} 
\begin{picture}(200,40)
\put(30, 25){$E_6$}
\put(60, 25){\circle*{5}}
\put(58, 30){$1$}
\put(60, 25){\line(1,0){20}}
\put(80, 25){\circle*{5}}
\put(78, 30){$3$}
\put(80, 25){\line(1,0){20}}
\put(100, 25){\circle*{5}}
\put(98, 30){$4$}
\put(100, 25){\line(0,-1){20}}
\put(100, 05){\circle*{5}}
\put(105, 03){$2$}
\put(100, 25){\line(1,0){20}}
\put(120, 25){\circle*{5}}
\put(118, 30){$5$}
\put(120, 25){\line(1,0){20}}
\put(140, 25){\circle*{5}}
\put(138, 30){$6$}
\end{picture}
\end{center}
Consider the unique $10$-dimensional irreducible representation, which is
denoted $10_s$ in \cite[Table~C.4]{ourbuch}. By Naruse \cite{Naruse0}, a 
$W$-graph is given by Table~\ref{wgraphf}. (The numbers inside a circle 
specify the subset $I(x)$; all $\mu_{x,y}$ are $0$ or $1$; we have an edge 
between $x$ and $y$ if and only if $\mu_{x,y}=1$.) From this graph, we find
that the basis vector with $I(x)=\{1,2,3,5,6\}$ spans the one-dimensional 
intersection of kernels considered above (in accordance with 
\cite[Table~C.4]{ourbuch}). This basis vector labels the last row and column
in the matrix of $Q^{10_s}$ in Table~\ref{wgraphf}.

In this case, the determination of the bound required by James' conjecture
is very easy. By Table~\ref{canf4}, we have $10_s \in
\Lambda_{\zeta_4}^\circ$. If we specialise $v \mapsto \zeta_8$, we notice 
that $Q_{\zeta_4}^{10_s}$ has rank $1$; all rows become equal to 
\[ \left[\begin{array}{c} -2+2\zeta_4, -2+2\zeta_4, 
-2+2\zeta_4, -2\zeta_8^3, -2+2\zeta_4, -2\zeta_8^3, -2+2\zeta_4, 
-2\zeta_8^3, -2\zeta_8^3, -2\zeta_8^3 \end{array}\right] \]
We see that, if we specialise further into a field of characteristic 
$\ell>0$, we will still obtain a matrix of rank $1$ unless $\ell=2$.
\end{exmp}

\medskip
\begin{rem} \label{nono} We have been able to systematically compute the 
matrices $Q^\lambda$ (with coefficients in $A$) for all $\lambda$ such that 
$d_\lambda \leq 2500$. For those $\lambda$ in type $E_8$ where this wasn't 
feasible (at least not with the computer power available to us), we 
nevertheless managed to compute directly the specialized matrices 
$Q^\lambda_{\zeta_e}$ for all relevant values of $e$. Note that this is sufficient 
to find the finite set of prime numbers $\mathcal{P}_e$ as above. (See 
\cite{gemu2} for details.) There is an on-going project to complete the 
determination of all ``generic'' matrices $Q^\lambda$ and to create a data base 
for making them generally available.
\end{rem}



\begin{thebibliography}{00}

\bibitem{Ar}
{\sc S.~Ariki}, {On the decomposition numbers of the Hecke algebra of
$G(m,1,n)$}, J. Math. Kyoto Univ. {\bf 36} (1996), 789--808.

\bibitem{AM}
{\sc S.~Ariki and A.~Mathas}, The number of simple modules of the
Hecke algebras of type $G(r,1,n)$, Math. Z. {\bf 233}, 601--623.

\bibitem{Ca1}
{\sc R.~W. Carter}, {Simple groups of {L}ie type}, Wiley, New York, 1972.

\bibitem{CR1}
{\sc C. W. Curtis and I. Reiner}, Methods of representation theory
Vol.~I and II, Wiley, New~York, 1981 and  1987.

\bibitem{DJ}
{\sc R.~Dipper and G. D.~James}, Representations of Hecke algebras of the
general linear groups, Proc.\ London Math.\ Soc. {\bf 52} (1986), 20--52.

\bibitem{Feit} 
{\sc W.~Feit}, The representation theory of finite groups, 
North-Holland Publishing Company, Amsterdam, 1982.

\bibitem{Fung}
{\sc F. Fung}, On the topology of components of some Springer fibers and 
their relation to Kazhdan--Lusztig theory,  Adv. Math.  {\bf 178} (2003),  
244--276. 

\bibitem{mybaum}
{\sc M.~Geck}, Brauer trees of Hecke algebras, Comm.\ Algebra {\bf 20} (1992),
2937--2973.

\bibitem{mye6}
{\sc M.~Geck}, The decomposition numbers of the Hecke algebra of type $E_6$,
Math.\ Comp.\ {\bf 61} (1993), 889--899.

\bibitem{myhab}
{\sc M.~Geck}, {\em Beitr\"age  zur Darstellungstheorie von 
Iwahori-Hecke-Algebren}, Habilitationsschrift, Aachener Beitr\"age zur
Mathematik {\bf 11}, Verlag der Au\-gustinus Buch\-hand\-lung, Aachen, 1995. 

\bibitem{mybour}
{\sc M.~Geck}, Representations of Hecke algebras at roots of unity
(S\'{e}minaire Bourbaki, 50\`eme ann\'ee, 1997-98, Exp.\ 836), 
Ast\'erisque {\bf 252} (1998), 33--55.

\bibitem{mykl}
{\sc M.~Geck}, Kazhdan-Lusztig cells and decomposition numbers, Represent.
Theory {\bf 2} (1998), 264--277 (electronic)

\bibitem{my00b}
{\sc M.~Geck}, On the number of simple modules of Iwahori--Hecke algebras
of finite Weyl groups. Bul. Stiit. Univ. Baia Mare, Ser. B {\bf 16} (2000), 
235--246.

\bibitem{laus05}
{\sc M.~Geck}, Modular representations of Hecke algebras. {\it In:}
Group representation theory (EPFL, 2005; eds. M.~Geck, D.~Testerman and
J.~Th\'evenaz), Presses Polytechniques et Universitaires
Romandes, EPFL-Press, 2006.

\bibitem{mymurph}
{\sc M.~Geck}, Kazhdan--Lusztig cells and the Murphy basis, Proc. London
Math. Soc. {\bf 93} (2006), 635--665.

\bibitem{mycell}
{\sc M.~Geck}, Hecke algebras of finite type are cellular, Invent.
Math. {\bf 169} (2007), 501--517.

\bibitem{chv} {\sc M.~Geck, G.~Hiss, F.~L\"ubeck, G.~Malle, and G.~Pfeiffer}, 
{\sf CHEVIE}---A system for computing and processing generic character tables,
Appl. Algebra Engrg. Comm. Comput. {\bf 7} (1996), 175--210; electronically
available at  {\tt http://www.math.rwth-aachen.de/{$\sim$}CHEVIE}
 
\bibitem{GeLu}
{\sc M.~Geck and K.~Lux}, The decomposition numbers of the Hecke algebra of
type $F_4$, Manus\-cripta Math.\ {\bf 70} (1991), 285--306.

\bibitem{gemu2}
{\sc M. Geck and J. M\"uller}, James' Conjecture for Hecke algebras of 
exceptional type, II, {\em in preparation}.

\bibitem{ourbuch}
{\sc M.~Geck and G.~Pfeiffer}, {\em Characters of finite Coxeter
groups and Iwahori--Hecke algebras}, London Math. Soc. Monographs,
New Series {\bf 21}, Oxford University Press, 2000.

\bibitem{GeRo1}
{\sc M.~Geck and R.~Rouquier}, {Centers and simple modules for Iwahori--Hecke
algebras},  Finite reductive groups, related structures and representations
(M.~Cabanes, ed.), Progress in Math.~{\bf 141}, Birkh\"auser, Boston,
1997, pp. 251--272.

\bibitem{GeRo2}
{\sc M.~Geck and R.~Rouquier}, Filtrations on projective modules
for Iwahori--Hecke algebras, {\it in}: Modular Representation
Theory of Finite Groups (Charlottesville, VA, 1998; eds. M.~J.~Collins,
B.~J.~Parshall and L.~L.~Scott), p.~211--221, Walter de Gruyter, Berlin 2001.

\bibitem{GrLe}
{\sc J.~J.~Graham and G.~I.~Lehrer}, Cellular algebras,
Invent.~Math. {\bf 123} (1996), 1--34.

\bibitem{Gy1}
{\sc A. Gyoja}, On the existence of a $W$-graph for an irreducible
representation of a finite Coxeter group. J.\ Algebra 86 (1984), 422--438.

\bibitem{GyUn}
{\sc A.~Gyoja and K.~Uno}, On the semisimplicity of {H}ecke algebras, 
J. Math.  Soc. Japan \textbf{41} (1989), no.~1, 75--79.

\bibitem{How}
{\sc R.~B.~Howlett}, $W$-graphs for the irreducible representations
of the Hecke algebras of type $E_7$ and $E_8$; private communication
with J.~Michel (December 2003).

\bibitem{Jac}
{\sc N.~Jacon}, Canonical basic sets for Hecke algebras, Contemp. Math., 
Amer. Math. Soc. {\bf 392} (2005), 33--41.

\bibitem{Ja}     
{\sc G.~D.~James}, The decomposition matrices of $\mbox{GL}_n(q)$ for 
$n \leq 10$, Proc.\ London Math.\ Soc. {\bf 60} (1990), 225--265.

\bibitem{KaLu}
{\sc D.~Kazhdan and G.~Lusztig}, Representations of Coxeter groups and
Hecke algebras, Invent. Math. {\bf 53} (1979), 165--184. 

\bibitem{llt}
{\sc A. Lascoux, B. Leclerc, J-Y. Thibon}, {Hecke algebras at roots of
unity and crystal bases of quantum affine algebras}. Comm. Math. Phys.
{\bf 181} (1996), 205--263.

\bibitem{LuBook}
{\sc G.~Lusztig}, Characters of reductive groups over a finite field,
Annals Math.\ Studies {\bf 107}, Princeton University Press, 1984.

\bibitem{Lusztig03}
{\sc G.~Lusztig}, Hecke algebras with unequal parameters, CRM Monographs
Ser.~{\bf 18}, Amer. Math. Soc., Providence, RI, 2003.

\bibitem{LuMuRi}
{\sc K. Lux, J. M\"uller and M. Ringe}, Peakword condensation and submodule 
lattices: an application of the MeatAxe, J. Symb. Comput. {\bf 17} (1994), 
529--544. 

\bibitem{McPa}
{\sc T.~P.~McDonough and C.~A.~Pallikaros}, On relations between the
classical and the Kazhdan--Lusztig representations of symmetric groups
and associated Hecke algebras, J. Pure and Applied Algebra {\bf 203}
(2005), 133--144.

\bibitem{jmich}
{\sc J. Michel}, Homepage of the development version of the {\sf GAP} part of 
{\sf CHEVIE}; see
{\tt http://www.institut.math.jussieu.fr/$\sim$jmichel/chevie/chevie.html}

\bibitem{muell}  
{\sc J.~M\"uller}, {Zerlegungszahlen f\"ur generische Iwahori-Hecke-Algebren
von exzeptionellem Typ}, Dissertation, RWTH Aachen, 1995.

\bibitem{Naruse0}
{\sc H.~Naruse}, $W$-graphs for the irreducible representations of the 
Iwahori--Hecke algebras of type $F_4$ and $E_6$; private communication
(January and July, 1998).

\bibitem{Parker} 
{\sc R.~A.~Parker}, The computer calculation of modular characters (the 
Meat-Axe), {\em In:} M.~D.~Atkinson ({\em ed.}), {\em Computational group 
theory}, Academic Press, London, 1984.

\bibitem{Ri}  
{\sc M.~Ringe,} The {\sf C MeatAxe} release 2.4, Lehrstuhl D f\"ur 
Mathematik, RWTH Aachen, 2004; electronically available at 
{\tt http://www.math.rwth-aachen.de/$\sim$MTX}

\bibitem{yin0}
{\sc Y.~Yin}, $W$-Graph representations for Coxeter groups and Hecke algebras,
Ph.D. thesis, University of Sydney, 2004.
\end{thebibliography}
\end{document}